\documentclass[11pt,english]{smfart}

\usepackage[T1]{fontenc}
\usepackage[english,francais]{babel}
\usepackage[all,cmtip]{xy}
\usepackage{amsmath,amsfonts,amssymb,mathrsfs,latexsym,amscd,amsthm,url,xspace,smfthm}
\usepackage{color}
\usepackage{float}

\usepackage[utf8]{inputenc}

\usepackage{microtype}
\usepackage{framed}
\usepackage{tikz}
\usepackage{graphicx}
\usepackage{wasysym}

\usepackage{tikz-3dplot} 
\tdplotsetmaincoords{60}{125}
\tdplotsetrotatedcoords{80}{0}{0}

\usepackage{fancybox}

\makeatother

\newcommand{\BibTeX}{{\scshape Bib}\kern-.08em\TeX}
\newcommand{\T}{\S\kern .15em\relax }
\newcommand{\AMS}{$\mathcal{A}$\kern-.1667em\lower.5ex\hbox
        {$\mathcal{M}$}\kern-.125em$\mathcal{S}$}

\tolerance 400
\pretolerance 200

\subjclass{14J10, 14J45, 14G05, 14M17, 14M27}
\author{Zhizhong \textsc{Huang}}
\address{Univ. Grenoble Alpes, Institut Fourier, F-38000 Grenoble, France.}
\email{zhizhong.huang@univ-grenoble-alpes.fr}
\author{Pedro \textsc{Montero}}
\address{AMSS, Chinese Academy of Sciences, Beijing, People's Republic of China.}
\email{pmontero@amss.ac.cn}

\title[Additive Fano threefolds]{Fano threefolds as equivariant compactifications of the vector group}


\newcommand{\Exc}{\operatorname{Exc}}

\newcommand{\Eff}{\operatorname{Eff}}

\newcommand{\NE}{\operatorname{NE}}

\newcommand{\Ga}{\mathbf{G}_a}

\newcommand{\II}{\operatorname{II}}
\newcommand{\III}{\operatorname{III}}
\newcommand{\IV}{\operatorname{IV}}
\newcommand{\V}{\operatorname{V}}

\theoremstyle{plain}
\newtheorem{thm}{Theorem}[section]
\newtheorem*{thm*}{Theorem}

\newtheorem*{mainthm*}{Main Theorem}

\newtheorem{lemma}[thm]{Lemma}
\newtheorem{cor}[thm]{Corollary}

\newtheorem{propo}[thm]{Proposition}

\theoremstyle{definition}
\newtheorem{defn}[thm]{Definition}

\newtheorem{exmp}[thm]{Example}

\newtheorem{prob}[thm]{Problem}
   
\newtheorem{ques}[thm]{Question}

\newtheorem{defn-thm}[thm]{Definition-Theorem}

\theoremstyle{remark}
\newtheorem*{claim*}{Claim}

\newtheorem{remark}[thm]{Remark}

\newtheorem*{not-and-def}{Notation and definitions}

\numberwithin{equation}{section}

\usepackage[square,sort,comma,numbers]{natbib}
\bibliographystyle{alpha}

\begin{document}

\def\smfbyname{}

\begin{abstract}
In this article, we determine all equivariant compactifications of the three-dimensional vector group $\mathbf{G}_a^3$ which are smooth Fano threefolds with Picard number greater or equal than two.
\end{abstract}

\maketitle

\tableofcontents

\newpage
\section{Introduction}

We work over the field of complex numbers $\mathbf{C}$.\\

The study of compactifications of the affine space $\mathbf{A}^n$ into smooth and complete algebraic varieties $X$ such that $B_2(X)=1$ was originally asked by Hirzebruch in \cite{Hir54}. In the projective case, which will be our case of interest, it is known after Kodaira \cite{Kod71} that, in this setting, $X$ has ample anticanonical divisor $-K_X$, i.e. $X$ is a {\it Fano manifold}. The problem to classify the pairs $(X,D)$ where $X\setminus D \cong \mathbf{A}^n$ has been solved in dimension $n\in\{1,2,3\}$: if $n=1$ we have $(X,D)\cong (\mathbf{P}^1,\{\mbox{point}\})$, and for $n=2$ Remmert and van de Ven proved in \cite{RV60} that $(X,D)\cong (\mathbf{P}^2,\mathbf{P}^1)$. The classification for $n=3$ has been achieved after the work of several authors \cite{BM78,Fur86,Fur90,Fur93a,Fur93b,FN89a,FN89b,Muk92,PS88,Pet89,Pet90,Pro91}: the possible pairs $(X,D)$ correspond to $\mathbf{P}^3$ with boundary a plane $\mathbf{P}^2$, the smooth quadric $\mathcal{Q}_3\subseteq \mathbf{P}^4$ with boundary a singular hyperplane section $\mathcal{Q}_0^2$, the Fano threefold $V_5$ with two possible boundaries and the Fano threefold $V_{22}$ with two possible boundaries. The case $n\geq 4$ remains open (see \cite{Pro94,PZ17} for partial results).

Through out this article we will be interested in some special smooth compactifications of $\mathbf{A}^n$ with $B_2\geq 2$. This problem has been considered by Morrow \cite{Mor72,Mor73} (cf. \cite{Kis02}) in the case of $n=2$, and by Kishimoto \cite{Kis05}, M\"{u}ller-Stach \cite{Mul90} and Nagaoka \cite{Nag17} in the case $n=3$ and $B_2=2$ for Fano threefolds. More precisely, we will treat the case of {\it additive Fano threefolds}:

By analogy with the case of toric varieties, where an algebraic torus $\mathbf{T}\cong \mathbf{G}_m^n$ operates effectively on a normal algebraic variety with a dense open orbit, we say that a normal projective variety $X$ is an {\it additive variety} if there is an effective action of the vector group $\mathbf{G}_a^n$ with a dense open orbit. In this case, $\dim(X)=n$ and the affine algebraic group $\mathbf{G}_a^n$ embeds equivariantly into $X$ as an open subset. In other words, additive varieties are projective equivariant compactifications of the algebraic group $\mathbf{G}_a^n$; for the latter we know that the underling scheme is isomorphic to the affine space $\mathbf{A}^n$. Therefore, by considering these varieties, we are situated in a rather special case of the previous situation. 

One of the main motivations for studying this class of algebraic varieties comes from arithmetic results due to Chambert-Loir and Tschinkel in \cite{CLT02,CLT12}, concerning the asymptotic distribution of rational points of bounded height on additive varieties defined over number fields. The precise statements concerning the arithmetic geometry of additive varieties and the relation with the Batyrev-Manin's principle are beyond the scope of this article, but we may refer the interested reader to the Bourbaki Seminar \cite{Pey02} for an introduction to this subject. From this point of view, it is interesting to find out explicit examples of additive varieties. The case of singular del Pezzo surfaces was treated by Derenthal and Loughran in \cite{DL10}. We would like to mention that a geometric analogue of the the Batyrev-Manin's principle is also established recently for smooth additive varieties over $\mathbf{C}$ by Bilu in her PhD thesis \cite{Bilu}.

The study of additive varieties began with the work of Hassett and Tschinkel in \cite{HT99}, where they establish some of the first properties of these varieties (see \S \ref{section:geometry}), they carefully study the case of $\mathbf{P}^n$ seen as an additive variety and they prove the so called {\it Hassett-Tschinkel correspondance}, which allows to identify all possible isomorphism classes of $\Ga^n$-structures in $\mathbf{P}^n$ with isomorphism classes of local commutative algebras of length $n+1$ (see \S \ref{section:HT} for more details). On one hand, it is well-known after Demazure \cite{Dem70} and Sumihiro \cite{Sumihiro} that toric structures on a normal variety are essentially unique: they are characterized by a fan (or polytope). On the other hand, it follows from Hassett-Tschinkel correspondance and a classical result of Suprunenko \cite{Sup56} on local commutative algebras that there are infinitely many non-isomorphic additive structures on $\mathbf{P}^n$ for $n\geq 6$. 

In order to obtain bounded families, we restrict ourselves to the case of Fano varieties\footnote{Smooth Fano varieties are bounded by \cite{KMM92}. See also \cite{Bir16a,Bir16b}.}. Our starting point is the following classification result proved by Hassett and Tschinkel in \cite[Theorem 6.1]{HT99} (cf. \S \ref{section:HT} and \S \ref{section:quadrics}).

\begin{thm}[Hassett \& Tschinkel]\label{theo:HT} Let $X$ be a smooth projective threefold which is an equivariant compactification of $\mathbf{G}_a^3$ and such that ${B_2(X)=1}$. Then $X \cong \mathbf{P}^3$ or $X \cong \mathcal{Q}_3 \subseteq \mathbf{P}^4$.
\end{thm}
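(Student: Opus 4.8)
The plan is to combine the classification of smooth Fano threefolds of Picard number one with the Hassett–Tschinkel correspondence. Since $B_2(X)=1$ and $X$ is a smooth Fano threefold containing an open subset isomorphic to $\mathbf{A}^3$, the first step is to recall — as already noted in the introduction via \cite{Kod71} — that any smooth projective compactification of $\mathbf{A}^3$ with $B_2=1$ is Fano, and then to invoke the complete list of such pairs $(X,D)$ with $X\setminus D\cong\mathbf{A}^3$ established by Furushima, Peternell, Prokhorov and others: the only possibilities are $(\mathbf{P}^3,\mathbf{P}^2)$, $(\mathcal{Q}_3,\mathcal{Q}_0^2)$, $(V_5,D)$ and $(V_{22},D)$. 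Thus the additive structure forces $X$ to be one of these four, and it remains to eliminate $V_5$ and $V_{22}$.

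The second step is to show that $V_5$ and $V_{22}$ admit no effective action of $\mathbf{G}_a^3$ with dense orbit. Here I would argue via the complement of the open orbit. If $\mathbf{G}_a^3$ acts on $X$ with dense orbit $U\cong\mathbf{G}_a^3$, then $D=X\setminus U$ is a $\mathbf{G}_a^3$-invariant effective divisor, and since $\Pic(X)\cong\mathbf{Z}$ it is (a multiple of) the ample generator; in particular $D$ is connected and its support is a specific anticanonical-type divisor. The key structural input from \cite{HT99} (recalled in \S\ref{section:geometry}) is that the boundary divisor $D$ of an additive variety must be, after passing to its reduction, a divisor supported on the $\mathbf{G}_a^n$-orbits of codimension one, and the unipotent group acts on each boundary component. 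For $V_5$ and $V_{22}$ one then derives a contradiction: either the only candidate boundary divisors (classified in the pairs $(X,D)$ above) fail to be preserved by any unipotent one-parameter subgroup, or — more efficiently — one uses that an additive structure on $X$ yields, by the Hassett–Tschinkel correspondence for $\mathbf{P}^n$-like behaviour on the boundary, strong restrictions on $\h^0(X,-K_X)$, the Hilbert series, and the singularities of $D$ that are incompatible with the known invariants of $V_5$ (where $-K_X$ has degree $40$ and $\h^0(-K_X)=$ small) and $V_{22}$.

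The cleanest route, and the one I would actually carry out, is to rule out $V_5$ and $V_{22}$ using automorphism groups: an additive threefold $X$ has $\dim\operatorname{Aut}(X)\geq 3$, since $\mathbf{G}_a^3\hookrightarrow\operatorname{Aut}(X)$, and moreover $\operatorname{Aut}(X)$ contains a connected unipotent subgroup of dimension $3$ acting with a dense orbit. For $V_5$ one has $\operatorname{Aut}(V_5)\cong\mathrm{PGL}_2(\mathbf{C})$, which has dimension $3$ but is reductive and contains no three-dimensional unipotent subgroup — its maximal unipotent subgroups are one-dimensional — so $V_5$ cannot be additive. For the general $V_{22}$ the automorphism group is finite (and for special members it is at most one- or two-dimensional, e.g. the Mukai–Umemura threefold has $\operatorname{Aut}^0\cong\mathrm{PGL}_2$), so again no three-dimensional unipotent subgroup acts, and $V_{22}$ is excluded. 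Combining with the first step, $X\cong\mathbf{P}^3$ or $X\cong\mathcal{Q}_3$, and both are classically known to be additive (for $\mathbf{P}^3$ this is in \cite{HT99}; for $\mathcal{Q}_3$ it will be recalled in \S\ref{section:quadrics}).

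The main obstacle I anticipate is the second step: one must be careful that the additive structure is used correctly to constrain the boundary, and that the automorphism-group computations for $V_5$ and especially the family $V_{22}$ are quoted in the right generality — one needs that \emph{no} member of the $V_{22}$ family, not merely the generic one, carries a three-dimensional unipotent action, which ultimately follows from the fact that $\operatorname{Aut}^0(V_{22})$ is never a group with a three-dimensional unipotent radical. Once this is in place, the theorem follows immediately from the classification of compactifications of $\mathbf{A}^3$ with $B_2=1$.
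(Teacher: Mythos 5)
Your argument is correct, but it follows a different route from the one the paper relies on (the paper does not reprove this statement; it quotes \cite[Theorem 6.1]{HT99}, whose proof is the one implicitly encoded in Theorem \ref{theo:canonical divisor}). The Hassett--Tschinkel argument first observes that, since $B_2(X)=1$ and the boundary is a single irreducible divisor generating $\Pic(X)$ with coefficient $a_1\geq 2$ in $-K_X$, the Fano index satisfies $i_X\geq 2$; this immediately removes $V_{22}$ (which has index $1$) and reduces the problem, via Iskovskikh's classification of index $\geq 2$ Fano threefolds of Picard number one, to $\mathbf{P}^3$, $\mathcal{Q}_3$ and the del Pezzo threefolds $V_1,\dots,V_5$, of which only $V_5$ survives as a compactification of $\mathbf{A}^3$ and is then killed by $\operatorname{Aut}(V_5)\cong\mathrm{PGL}_2(\mathbf{C})$. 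You instead invoke the full Furushima--Peternell--Prokhorov et al.\ classification of pairs $(X,D)$ with $X\setminus D\cong\mathbf{A}^3$ and $B_2=1$ to get down to $\{\mathbf{P}^3,\mathcal{Q}_3,V_5,V_{22}\}$, and then exclude both $V_5$ and $V_{22}$ by the absence of a three-dimensional unipotent subgroup in their automorphism groups (for $V_{22}$ this needs \cite[Theorem 1.1.2]{KPS18}, which covers \emph{all} members of the family, as you correctly worry). Both routes are valid; yours trades the elementary index bound of Theorem \ref{theo:canonical divisor} for a much heavier classification input in the first step, but gains a uniform and clean second step. Two small points: your middle paragraph about constraining the boundary via orbits and Hilbert series is too vague to count as an argument, but you explicitly discard it in favour of the automorphism-group route, so nothing is lost; and the parenthetical claiming $\operatorname{Aut}^0(V_{22})$ is ``at most one- or two-dimensional'' while citing the Mukai--Umemura example with $\operatorname{Aut}^0\cong\mathrm{PGL}_2$ (which is three-dimensional) is internally inconsistent --- the correct statement, which suffices, is that $\operatorname{Aut}^0(V_{22})$ never contains a unipotent subgroup of dimension $3$.
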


In regard of Theorem \ref{theo:HT} and the previous discussion, the main purpose of this article is to treat the case of smooth Fano threefolds with $B_2\geq 2$. All these threefolds were classified into 88 families by Mori and Mukai in \cite{MM81,MM03}. Our main result, which can be seen as a more precise version of a birational classification of Kebekus \cite{Keb98} in this particular case, reads as follows.

\begin{mainthm*}\label{theo:main}
Let $X$ be a smooth Fano threefold which is an equivariant compactification of the additive group $\Ga^3$. Then $X$ is isomorphic to one of the following.
\begin{enumerate}
 \item $\mathbf{P}^3$.
 \item The non-degenerate quadric $\mathcal{Q}_3 \subseteq \mathbf{P}^4$.
 \item $\II_{28}$, the blow-up of $\mathbf{P}^3$ along a plane cubic.
 \item $\II_{30}$, the blow-up of $\mathbf{P}^3$ along a conic or, equivalently, the blow-up of $\mathcal{Q}_3 \subseteq \mathbf{P}^4$ at a point.
 \item $\II_{31}$, the blow-up of $\mathcal{Q}_3 \subseteq \mathbf{P}^4$ along a line.
 \item $\II_{33}$, the blow-up of $\mathbf{P}^3$ along a line or, equivalently, the projective bundle $\mathbf{P}(\mathcal{O}_{\mathbf{P}^1}^{\oplus 2}\oplus \mathcal{O}_{\mathbf{P}^1}(1))$.
 \item $\II_{34} \cong \mathbf{P}^1 \times \mathbf{P}^2$.
 \item $\II_{35}$, the blow-up of $\mathbf{P}^3$ at a point $p$ or, equivalently, the projective bundle $\mathbf{P}(\mathcal{O}_{\mathbf{P}^2}\oplus \mathcal{O}_{\mathbf{P}^2}(1))$.
 \item $\II_{36}\cong \mathbf{P}(\mathcal{O}_{\mathbf{P}^2}\oplus \mathcal{O}_{\mathbf{P}^2}(2))$.
 \item $\III_{23}$, the blow-up of $\mathbf{P}(\mathcal{O}_{\mathbf{P}^2}\oplus \mathcal{O}_{\mathbf{P}^2}(1))$ with center a conic passing through the center of the blow-up $\mathbf{P}(\mathcal{O}_{\mathbf{P}^2}\oplus \mathcal{O}_{\mathbf{P}^2}(1))\to \mathbf{P}^3$.
 \item $\III_{26}$, the blow-up of $\mathbf{P}^3$ with center a disjoint union of a point and a line.
 \item $\III_{27}\cong\mathbf{P}^1 \times \mathbf{P}^1 \times \mathbf{P}^1.$
 \item $\III_{28}\cong \mathbf{P}^1\times \mathbf{F}_1$. 
 \item $\III_{29}$, the blow-up of $\mathbf{P}(\mathcal{O}_{\mathbf{P}^2}\oplus \mathcal{O}_{\mathbf{P}^2}(1))$ with center a line on the exceptional divisor of the blow-up $\mathbf{P}(\mathcal{O}_{\mathbf{P}^2}\oplus \mathcal{O}_{\mathbf{P}^2}(1))\to \mathbf{P}^3$.
 \item $\III_{30}$, the blow-up of $\mathbf{P}(\mathcal{O}_{\mathbf{P}^2}\oplus \mathcal{O}_{\mathbf{P}^2}(1))$ with center the strict transform of a line passing through the center of the blow-up $\mathbf{P}(\mathcal{O}_{\mathbf{P}^2}\oplus \mathcal{O}_{\mathbf{P}^2}(1))\to \mathbf{P}^3$.
 \item $\III_{31}\cong \mathbf{P}(\mathcal{O}_{\mathbf{P}^1\times \mathbf{P}^1}\oplus \mathcal{O}_{\mathbf{P}^1\times \mathbf{P}^1}(1,1))$.
 \item $\IV_{10} \cong \mathbf{P}^1 \times \mathbf{S}_7$.
 \item $\IV_{11}$, the blow-up of $\mathbf{P}^1 \times \mathbf{F}_1$ with center $\{t\}\times \{E\}$, where $t\in \mathbf{P}^1$ and $E\subseteq \mathbf{F}_1$ is an exceptional curve of the first kind.
 \item $\IV_{12}$, the blow-up of $\mathbf{P}(\mathcal{O}_{\mathbf{P}^1}^{\oplus 2}\oplus \mathcal{O}_{\mathbf{P}^1}(1))$ with center two exceptional lines of the blow-up $\mathbf{P}(\mathcal{O}_{\mathbf{P}^1}^{\oplus 2}\oplus \mathcal{O}_{\mathbf{P}^1}(1))\to \mathbf{P}^3$.
\end{enumerate}
Conversely, every Fano threefold in the above listed families\footnote{For $\III_{23}$ this is true only for a particular member of the family (see Lemma \ref{lemma:III23} and its proof).} admits a $\Ga^3$-structure. 
\end{mainthm*}

Finally, let us remark that we only treat the problem of existence of additive structures on smooth Fano threefolds. The methods and the dependence between the sections of this article is discussed below. 

The unicity problem is more subtle (cf. Remark \ref{remark:unicity quadric} and Remark \ref{remark: unicity flag}) and even for toric varieties, for which is known after the work of Arzhantsev and Romaskevich \cite{AR17} that the existence of an additive structure is equivalent to existence of a (unique) structure normalized by the acting torus (see Theorem \ref{theo: toric existence}), in general is not clear for the authors whether there are finitely many non-normalized actions or not (cf. Lemma \ref{lemma:product actions}, \cite[Proposition 5.5]{HT99} and \cite[Problem 2]{AR17}).

The varieties listed in Main Theorem can be represented by the following family tree of additive Fano threefolds, in which the toric ones are double-boxed and two of them are connected by a solid line (resp. dashed line) if one can be obtained as the blow-up of the other one along a curve (resp. a point). A posteriori, all primitive additive Fano threefolds with $B_2(X)\geq 2$ are toric, and every primitive toric Fano threefold is additive (see Corollary \ref{primitive toric}).
$$
\xymatrixcolsep{2mm}
\xymatrix{
\boxed{\mathcal{Q}_3} & & \doublebox{$\mathbf{P}^3$} \\
\boxed{\II_{31}} \ar[u] & \boxed{\II_{30}} \ar@{.>}[ul] \ar[ur] & \boxed{\II_{28}} \ar[u] & \doublebox{$\II_{33}$} \ar[ul] & \doublebox{$\II_{35}$} & \doublebox{$\II_{36}$} & \doublebox{$\II_{34}$} \\
& \boxed{\III_{23}} \ar[u] \ar[urrr] & \doublebox{$\III_{30}$} \ar[ur] \ar[urr] & \doublebox{$\III_{29}$}  \ar[ur] \ar[urr] & \doublebox{$\III_{26}$} \ar[u] \ar[urr] & \doublebox{$\III_{27}$} & \doublebox{$\III_{28}$} \ar[u] & \doublebox{$\III_{31}$}\\
& & \doublebox{$\IV_{12}$} \ar[u] & & & \doublebox{$\IV_{10}$} \ar[u] \ar[ur] & \doublebox{$\IV_{11}$} \ar[u] \ar[ur] &
}$$

\subsection*{Methods and outline of the article}

In section \S \ref{section:geometry} we review some of the properties of {\it additive varieties} and {\it additive morphisms}. The central part of the article is \S \ref{section:Fano 3folds} where we prove our Main Theorem, and the dependence with the previous sections goes as follows: In order to study some explicit blow-ups we need to work in coordinates. In particular, we recall in \S \ref{section:HT} the {\it Hassett-Tschinkel correspondance} which allows to explicitly describe all additive structures on the projective space $\mathbf{P}^3$, while the the case of smooth and singular quadric hypersurfaces is treated in \S \ref{section:quadrics}. The toric case is treated separately in \S \ref{section:toric} and Appendix \ref{appendix}. Arzhantsev's criterion for flag varieties is stated in \S \ref{section:flags} and used later on to prove that the projective bundle $\mathbf{P}(T_{\mathbf{P}^2})$ is not additive. In \S \ref{section:kishimoto} we review Kishimoto's classification of smooth Fano threefolds of Picard number two which are compactification of $\mathbf{A}^3$ with associated log canonical divisor being not nef. We use Kishimoto's classification in a very essential way during the article. Finally, we discuss in \S \ref{section:open problems} some open problems and questions.

\section*{Notation}\label{notation}
For a locally free sheaf $E$ on a variety $X$ we define the projectivization $\mathbf{P}(E)$ to be $\mathbf{Proj}_{\mathcal{O}_X}\oplus_{m\geq 0} \mbox{Sym}^m(E)$, following Grothendieck's convention. We denote by $\mathbf{F}_a$ the Hirzebruch surface $\mathbf{P}(\mathcal{O}_{\mathbf{P}^1}\oplus \mathcal{O}_{\mathbf{P}^1}(a))$, with $a\geq 0$. We denote by $\mathbf{S}_d$ the smooth del Pezzo surface of degree $(K_{\mathbf{S}_d})^2=d\in \{1,\ldots,8\}$, where $\mathbf{S}_8 \cong \mathbf{F}_1$. Along the paper $\mathcal{Q}_n\subseteq \mathbf{P}^{n+1}$ stands for the non-degenerate quadric hypersurface of dimension $n$, while $\mathcal{Q}_0^n \subseteq \mathbf{P}^{n+1}$ denotes the corank one quadric hypersurface of dimension $n$, i.e. the projective cone over $\mathcal{Q}_{n-1}\subseteq \mathbf{P}^n$. We will freely use the notation and results of the Minimal Model Program (MMP for short) in \cite{KM98}. \\

We will denote by Roman numerals the Picard number of a family of Fano threefolds and by a sub-index the corresponding number in the tables in \cite{MM81,MM03}. For example, $\operatorname{II}_{35}$ stands for the (deformation class of the) Fano threefold \mbox{No. 35} in the list of Mori and Mukai for Fano threefolds of Picard number $2$ (i.e., $\operatorname{II}_{35} \cong \mathbf{P}(\mathcal{O}_{\mathbf{P}^2}\oplus \mathcal{O}_{\mathbf{P}^2}(1))$ in \cite{MM81}) and $\IV_{13}$ stands for the family of Fano threefolds \mbox{No. 13} in the list of Mori and Mukai for Fano threefolds of Picard number $4$ (i.e. the Fano threefold family missing in \cite{MM81} and appearing in \cite{MM03}, whose elements are obtained as the blow-up of $\mathbf{P}^1\times \mathbf{P}^1 \times \mathbf{P}^1$ with center a curve of tridegree $(1,1,3)$). We will sometimes, by abuse of notation, regard the families as single varieties if it is clear from the context (e.g. by {\it the variety} $\II_{27}$ we mean {\it any member of the family} $\II_{27}$). \\

\textbf{Convention.} Following \cite{MM81}, we say that a smooth Fano threefold $X$ with $B_2(X)\geq 2$ is {\it primitive} if $X$ cannot be realized as the blow-up of a smooth Fano threefold along a curve.

\section{Geometry of additive varieties and equivariant morphisms}\label{section:geometry}

In general, the category of additive varieties together with their additive structures and morphisms can be seen (via the forgetful functor) as a subcategory of the category of compactifications of the affine space with their corresponding boundary divisor. Since the additive structures are in general not unique, we need to keep track of the additive structure on these varieties. This section is devoted to recall some special features concerning additive varieties and additive morphisms. They are distributed in various references so for the sake of the reader sometimes we give proofs.
\begin{propo}[{\cite[Theorem 2.5]{HT99}}]\label{prop:boundary divisors}
Let $X$ be a normal projective equivariant compactification of $\Ga^n$ of Picard number $\rho$. Then the boundary is given by $X\setminus \Ga^n=\cup_{i=1}^{\rho} D_i$ where $D_i$ are all divisors. The group $\operatorname{Cl}(X)$ is freely generated by $[D_i],1\leqslant i\leqslant \rho$ and $\Eff(X)=\sum_{i=1}^\rho \mathbf{R}_{\geq 0}[D_i]$.
\end{propo}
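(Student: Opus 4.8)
The underlying scheme of $\Ga^n$ is the affine space $\mathbf{A}^n$, which is smooth, rational and factorial and has no non-constant invertible regular functions; the plan is to derive everything from these properties together with the existence of the $\Ga^n$-action. First I would fix the shape of the boundary: since $X$ is proper and $\Ga^n\cong\mathbf{A}^n$ is affine of dimension $n\geq 1$, the closed subset $Z:=X\setminus\Ga^n$ is non-empty, and it is classical that the complement of an affine open subset of a normal projective variety is of pure codimension one; write $Z=D_1\cup\cdots\cup D_\rho$ for its irreducible components, each a prime divisor.

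For the class group I would use the excision exact sequence $\bigoplus_{i=1}^{\rho}\mathbf{Z}\,[D_i]\longrightarrow\operatorname{Cl}(X)\longrightarrow\operatorname{Cl}(\mathbf{A}^n)\longrightarrow 0$, which, since $\operatorname{Cl}(\mathbf{A}^n)=0$, shows that the classes $[D_i]$ generate $\operatorname{Cl}(X)$. They are $\mathbf{Z}$-linearly independent: a relation $\sum_i a_iD_i=\operatorname{div}_X(h)$ restricts on $\mathbf{A}^n$ to $\operatorname{div}_{\mathbf{A}^n}(h)=0$, forcing $h\in\mathcal{O}(\mathbf{A}^n)^{\times}=\mathbf{C}^{\times}$, so the relation is trivial. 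Hence $\operatorname{Cl}(X)=\bigoplus_{i=1}^{\rho}\mathbf{Z}\,[D_i]$ is free of rank equal to the number of boundary divisors. (Since $X$ is normal, $\Sing(X)\subseteq Z$ has codimension $\geq 2$, so $\operatorname{Cl}(X)=\Pic(X_{\mathrm{reg}})$; in particular $\operatorname{Cl}(X)=\Pic(X)$ and $\rho$ is the Picard number whenever $X$ is smooth, which is the relevant case in the sequel.)

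It remains to identify $\Eff(X)$, and this is the substantive point. The inclusion $\sum_i\mathbf{R}_{\geq 0}[D_i]\subseteq\Eff(X)$ is obvious, so it suffices to show that every effective divisor $D$ satisfies $[D]\in\sum_i\mathbf{Z}_{\geq 0}[D_i]$. By factoriality of $\mathbf{A}^n$ one may write $D|_{\Ga^n}=\operatorname{div}_{\mathbf{A}^n}(f)$ for a polynomial $f\in\mathbf{C}[x_1,\dots,x_n]$; then $D-\operatorname{div}_X(f)$ is supported on $Z$, say $D\sim\sum_ic_iD_i$ with $c_i=\ord_{D_i}(D)-\ord_{D_i}(f)$, and since $\ord_{D_i}(D)\geq 0$ it is enough to prove $\ord_{D_i}(f)\leq 0$, i.e.\ that \emph{a polynomial cannot vanish along a boundary divisor}. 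This is the only place where one genuinely needs more than the open immersion $\mathbf{A}^n\hookrightarrow X$; indeed it is false for arbitrary normal compactifications of $\mathbf{A}^n$, so the group structure is indispensable. Concretely: each $D_i$ is $\Ga^n$-invariant because $\Ga^n$ is connected, so if $\ord_{D_i}(f)\geq 1$ then every translate $g^{\ast}f=f(\cdot+g)$, $g\in\Ga^n$, also vanishes along $D_i$, hence lies in the maximal ideal $\mathfrak{m}$ of the local ring $\mathcal{O}_{X,\eta_i}$ at the generic point $\eta_i$ of $D_i$; therefore the whole $\mathbf{C}$-span of the translates of $f$ lies in $\mathfrak{m}$. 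But in characteristic zero this span equals, by Taylor's formula, the span of all partial derivatives $\partial^{\alpha}f$, and differentiating a highest-degree monomial of $f$ produces a non-zero constant; thus $1\in\mathfrak{m}$, which is absurd. Hence $c_i\geq 0$ for all $i$, and consequently $\Eff(X)=\sum_{i=1}^{\rho}\mathbf{R}_{\geq 0}[D_i]$.

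The two exact sequences, the reduction of purity to the affine case, and the bookkeeping with orders of vanishing are all routine; the genuinely delicate step, and the main obstacle, is the estimate $\ord_{D_i}(f)\leq 0$, which must be established from the vector-group structure (via the translation action) rather than from the mere fact that $X$ compactifies the affine space.
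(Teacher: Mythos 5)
Your proof is correct, and for the first two assertions (purity of the boundary and the freeness of $\operatorname{Cl}(X)$ on the $[D_i]$) it follows essentially the same route as the paper: a Hartogs-type argument for purity, and the combination of $\operatorname{Cl}(\mathbf{A}^n)=0$ with $\mathcal{O}(\mathbf{A}^n)^{\times}=\mathbf{C}^{\times}$ for the class group. The interesting divergence is at the effective cone. The paper's proof writes $D|_U=\operatorname{div}_U(f)$ and asserts that $D$ is thereby linearly equivalent to an effective divisor supported on the boundary, without explaining why the coefficients $\ord_{D_i}(D)-\ord_{D_i}(f)$ are non-negative; as written, that step uses only factoriality of $\mathbf{A}^n$ and would apply verbatim to any normal compactification of $\mathbf{A}^n$, for which the conclusion is false (e.g.\ blow up $\mathbf{F}_1$ at a point of the $(-1)$-curve away from the boundary fiber: the complement of the three boundary components is still $\mathbf{A}^2$, but the strict transform $H-e_1-e_2$ of the corresponding fiber is effective and not in the cone spanned by $H-e_1$, $e_1-e_2$, $e_2$). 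You correctly isolate the missing estimate $\ord_{D_i}(f)\leq 0$ as the crux and prove it from the group structure: each $D_i$ is invariant under the connected group $\Ga^n$, so all translates of $f$ would lie in $\mathfrak{m}_{\mathcal{O}_{X,\eta_i}}$, while their span contains a non-zero constant by Taylor expansion. This is a clean, elementary substitute for the argument in Hassett--Tschinkel, who instead produce a $\Ga^n$-invariant member of $|D|$ via the fixed-point theorem for unipotent groups acting on the complete variety $|D|$ together with the absence of non-trivial characters of $\Ga^n$; both arguments use the homogeneity of $U$ in an essential way, and your version has the merit of making explicit exactly where the mere isomorphism $U\cong\mathbf{A}^n$ would not suffice.
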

\begin{proof}
Write $U=\mathbf{A}^n$ and $X\setminus U=\cup_{i=1}^m D_i$ where $D_i,1\leqslant i\leqslant m$ are irreducible components. Suppose there exists say $D_1$ of codimension at least two. Take $V$ an affine open subset of $X\setminus\cup_{i=2}^m D_i$ such that $V\cap D_1$ is non-empty and $V\cap U$ is also affine. Then by Hartog's extension theorem or Krull's Hauptidealsatz (cf. \cite[Proposition I.6.3A]{Hartshorne}), the restriction morphism
$$\mathcal{O}_X(V)\to\mathcal{O}_X(V\cap U)$$
is an isomorphism. But $V\cap U$ is clearly a proper open subscheme of $V$, whose ring spectrum is obtained by taking localization at the prime ideal defining $V\cap D_1$, which is a contradiction. So all $D_i$'s are divisors. 
Since $\mathcal{O}_X(U)$ is factorial, any (effective) divisor is linearly equivalent, by adding a principal divisor $\operatorname{div}(f)$ with $f\in\mathbf{C}[X_1,\cdots,X_n]$, to another one disjoint from $U$. Since the only regular invertible functions on $U$ are constant, there is no non-trivial relation between $[D_i],1\leqslant i\leqslant m$ and $m=\rho$.
\end{proof}

By looking at vanishing order of vector fields along the boundary components and using the fact that the additive group $\Ga$ has no characters, Hassett and Tschinkel proved in \cite[Theorem 2.7]{HT99} the following result concerning the canonical class of smooth additive varieties.

\begin{thm}[Hassett \& Tschinkel]\label{theo:canonical divisor}
Let $X$ be a smooth projective equivariant compactification of $\Ga^n$ of Picard number $\rho$. Then $$-K_X=\sum_{i=1}^{\rho} a_i[D_i],\quad a_i\geqslant 2,$$
where $D_1,\ldots,D_\rho$ denote the divisors on the boundary $X\setminus \Ga^n$. In particular, the log canonical divisor $K_X+\sum_i D_i $ is not nef and smooth projective compactifications of $\Ga^n$ of Picard number 1 are Fano manifolds of Fano index  $i_X \geq 2$.
\end{thm}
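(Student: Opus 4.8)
The plan is to compute the divisor of a rational top-form adapted to the group action. The open orbit $U = X \setminus \bigcup_{i=1}^\rho D_i$ is a torsor under $\Ga^n$, hence $\Ga^n$-equivariantly isomorphic to $\Ga^n = \Spec\mathbf{C}[x_1,\dots,x_n]$ with the translation action; I take $\omega = dx_1 \wedge \dots \wedge dx_n$, the translation-invariant $n$-form, which is nowhere vanishing on $U$. Thus $K_X \sim \operatorname{div}(\omega)$ is supported on the boundary, and since $X$ is smooth so that $\operatorname{Pic}(X) = \operatorname{Cl}(X)$ is freely generated by the classes $[D_i]$ (Proposition \ref{prop:boundary divisors}), we may write $\operatorname{div}(\omega) = -\sum_{i=1}^\rho a_i D_i$ with $a_i \in \mathbf{Z}$. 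Everything reduces to showing $a_i \geq 2$ for each $i$, i.e. that $\omega$ has a pole of order at least two along $D_i$.

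For this I would compare $\omega$ with the fundamental vector fields. A basis of $\mathfrak{g} = \operatorname{Lie}(\Ga^n)$ induces global regular vector fields $\partial_1, \dots, \partial_n$ on $X$ restricting to $\partial/\partial x_1, \dots, \partial/\partial x_n$ on $U$, so the rational function $\omega(\partial_1,\dots,\partial_n)$ is identically $1$. Working in local coordinates $z_1,\dots,z_n$ at a general point of $D_i = \{z_1 = 0\}$ and writing $\omega = g\, dz_1 \wedge \dots \wedge dz_n$ and $\partial_j = \sum_k h_{jk}\,\partial/\partial z_k$, this identity reads $g\cdot\det(h_{jk}) = 1$, so $a_i = \operatorname{ord}_{D_i}\det(h_{jk})$. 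Since $\Ga^n$ is connected it preserves each boundary component $D_i$; hence every $\partial_j$ is tangent to $D_i$, which gives $z_1 \mid h_{j1}$ for all $j$ and already yields $a_i \geq 1$.

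The extra order of vanishing is where the hypothesis that $\Ga$ has no nontrivial characters enters. As $\dim D_i = n-1 < n$, the orbit of the generic point of $D_i$ is dense in $D_i$ (of dimension $n-1$), so the stabilizer of that point is positive-dimensional and, being a closed subgroup of $\Ga^n$, contains a subgroup $G \cong \Ga$; its fundamental vector field vanishes along $D_i$, so $G$ fixes $D_i$ pointwise. Choosing the basis of $\mathfrak{g}$ so that $\partial_1$ generates $G$ gives $z_1 \mid h_{1k}$ for all $k$. Moreover $G$ acts on the conormal line bundle $\mathcal{I}_{D_i}/\mathcal{I}_{D_i}^2$, and this action must be trivial since on each fiber it is a character of $\Ga$; equivalently $\sigma_t^*(z_1) \equiv z_1 \pmod{(z_1^2)}$ for the $G$-action, whence $z_1^2 \mid h_{11}$. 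Expanding $\det(h_{jk})$ along the first row (divisible by $z_1$, with $(1,1)$-entry divisible by $z_1^2$) and using that the first column is divisible by $z_1$, one checks that every resulting term is divisible by $z_1^2$; therefore $a_i \geq 2$. The remaining assertions are then formal: $-(K_X + \sum_i D_i) = \sum_i(a_i-1)D_i$ is effective and nonzero, hence $K_X + \sum_i D_i$ is not nef; and if $\rho = 1$ then the unique boundary divisor $D_1$, being a nonzero effective generator of $\operatorname{Pic}(X) \cong \mathbf{Z}$, is ample, so $-K_X = a_1 D_1$ is ample and $i_X \geq a_1 \geq 2$.

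The step I expect to be the crux is the second-order vanishing along $D_i$: extracting a one-parameter subgroup that fixes $D_i$ pointwise (the dimension count for the generic stabilizer, and the passage from "vanishing fundamental vector field" to "pointwise fixed"), and then converting the triviality of the induced action on the conormal bundle into the divisibility $z_1^2 \mid h_{11}$. Once this is in place, the determinant bookkeeping and the two corollaries are routine.
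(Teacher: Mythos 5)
Your strategy is essentially that of Hassett--Tschinkel's original proof of \cite[Theorem 2.7]{HT99}; the paper itself does not reprove this statement but simply cites it (and cites \cite[Lemma 4.6]{LM09} for the non-nefness and Kodaira for the $\rho=1$ case). Your reduction to $a_i=\operatorname{ord}_{D_i}\det(h_{jk})$, the first-order vanishing coming from tangency of the fundamental vector fields, the use of the triviality of characters of $\Ga$ on the conormal line, and the final determinant bookkeeping are all correct, and your derivation of the two corollaries is fine (indeed more elementary than the paper's appeal to pseudoeffectivity).

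There is, however, a genuine gap at exactly the step you flag as the crux: you assert that the orbit of the generic point of $D_i$ is dense in $D_i$. This is false in general. For the structure $\rho_2$ (or $\rho_3$) on $\mathbf{P}^3$ of Corollary \ref{coro:P3} the boundary plane is foliated by one-dimensional orbits, and for $\rho_4$ it is fixed pointwise. Your extraction of a single one-parameter subgroup $G\cong\Ga$ fixing $D_i$ pointwise rests on that density: you can pass from ``$G$ stabilizes one generic point $p$'' to ``$G$ fixes $D_i$ pointwise'' only because commutativity of $\Ga^n$ makes $\operatorname{Stab}(p)$ constant along the orbit of $p$, and that orbit must be dense for its closure to be $D_i$. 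When the generic orbit is not dense, the stabilizers of distinct orbits in $D_i$ are in general distinct subspaces of $\mathbf{C}^n$ (this already happens for $\rho_3$ on $\mathbf{P}^3$), and your argument does not produce a single $G$ acting trivially on $D_i$. The repair is a case split. If the generic orbit in $D_i$ is dense, your argument goes through verbatim. If not, then the matrix of tangential components $(h_{jk})_{j=1,\dots,n;\,k\geq 2}$ restricted to $D_i$ has rank $<n-1$ at the generic point, so every $(n-1)\times(n-1)$ minor formed from columns $2,\dots,n$ vanishes along $D_i$; expanding $\det(h_{jk})$ along the first column, whose entries are already divisible by $z_1$ by tangency, each term is then divisible by $z_1^2$, and $a_i\geq 2$ follows with no need for the subgroup $G$ at all. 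With this second case added, your proof is complete.
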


\begin{proof}
The first part of the statement is the content of \cite[Theorem 2.7]{HT99}. It follows from \cite[Lemma 4.6]{LM09} that in this case the log canonical divisor $K_X+\sum_i D_i$ is not pseudoeffective and hence not nef. The fact that smooth projective compactification of the affine space of Picard number 1 are Fano was observed by Kodaira in \cite{Kod71}.
\end{proof}


Next, we recall the following algebraic version of Blanchard's lemma \cite[Proposition I.1]{Bla56} proved by Brion in \cite[Theorem 7.2.1]{Bri17}, which will allow us to run a $\Ga^n-$MMP. See also \cite[Corollary 2.4]{HT99} and \cite[Corollary 7.2.2]{Bri17}.

\begin{thm}[Blanchard's lemma]\label{theo:Blanchard} Let $G$ be a connected linear algebraic group, $X,Y$ be normal varieties and $f:X\to Y$ be a proper morphism such that $f_*\mathcal{O}_X=\mathcal{O}_Y$. Then any $G$-structure on $X$ uniquely determines a $G$-structure on $Y$ such that $f$ is $G$-equivariant.
\end{thm}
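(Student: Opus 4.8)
The plan is to construct the $G$-action on $Y$ directly from the one on $X$: I would descend the ``translated'' morphism $(g,x)\mapsto f(g\cdot x)$ along $f$, using the rigidity of complete fibres to see that it is well defined on $Y$, and then read off the action axioms and uniqueness by pulling everything back to $X$, where $f$ is surjective. Concretely, write $\alpha\colon G\times X\to X$ for the given action and set $\psi:=f\circ\alpha\colon G\times X\to Y$, $(g,x)\mapsto f(g\cdot x)$.

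The key point is the claim that $\psi$ is constant along the fibres of $\mathrm{id}_G\times f\colon G\times X\to G\times Y$; equivalently, that $f(g\cdot x)$ depends only on $g$ and on $f(x)$. To prove it, fix $y_0\in Y$ and let $F=(f^{-1}(y_0))_{\mathrm{red}}$; since $f$ is proper with $f_*\mathcal{O}_X=\mathcal{O}_Y$, the Stein factorization of $f$ is trivial and $F$ is a connected complete variety. Restricting $\psi$ gives $\Psi\colon F\times G\to Y$, $(x,g)\mapsto f(g\cdot x)$, and $\Psi(F\times\{e\})=f(F)=\{y_0\}$ is a single point. By the classical rigidity lemma — applicable because $F$ is complete, $G$ is connected (indeed irreducible, being a connected algebraic group) and $Y$ is separated — the morphism $\Psi$ factors through the projection $F\times G\to G$. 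Hence $f(g\cdot x)$ is independent of $x\in F$, which is exactly the claim.

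Granting the claim, I would observe that $f$ is surjective (the inclusion $\mathcal{O}_Y\hookrightarrow f_*\mathcal{O}_X$ forces the image to be all of $Y$), hence so is the proper morphism $\mathrm{id}_G\times f$, and that by flat base change along the flat projection $G\times Y\to Y$ one has $(\mathrm{id}_G\times f)_*\mathcal{O}_{G\times X}=\mathcal{O}_{G\times Y}$. A morphism to a variety which is constant on the fibres of such a map descends uniquely to the target: the underlying set map is forced by surjectivity, and it is a morphism because regular functions on open subsets of $G\times Y$ coincide with their pullbacks to $G\times X$. Applying this to $\psi$ yields a unique morphism $a\colon G\times Y\to Y$ with $a\circ(\mathrm{id}_G\times f)=\psi$, i.e. $f(g\cdot x)=a(g,f(x))$ for all $(g,x)$; this is precisely the statement that $f$ is $G$-equivariant for $a$.

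It remains to check that $a$ is a group action and is the only one making $f$ equivariant. Since $f$ is surjective and all schemes here are reduced, any identity between morphisms out of $G^{k}\times Y$ may be tested after precomposing with $\mathrm{id}_{G^{k}}\times f$, i.e. on points of the form $f(x)$; from $f(e\cdot x)=f(x)$ one gets $a(e,-)=\mathrm{id}_Y$, and from $f((gh)\cdot x)=f(g\cdot(h\cdot x))$ one gets $a(gh,y)=a(g,a(h,y))$, so $a$ is an action, and any other action $a'$ with $f$ equivariant satisfies $a'(g,f(x))=f(g\cdot x)=a(g,f(x))$, hence $a'=a$. The only substantial input beyond this bookkeeping is the rigidity lemma used in the claim; I expect that to be the main obstacle — everything else is a formal consequence of properness together with $f_*\mathcal{O}_X=\mathcal{O}_Y$ (in particular the connectedness of the fibres of $f$, which is what makes the rigidity argument go through).
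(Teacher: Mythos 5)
The paper does not actually prove this statement: it is recalled from Blanchard \cite{Bla56} and Brion \cite[Theorem 7.2.1]{Bri17} with a citation only, so there is no internal proof to compare against. Your argument is correct and is essentially the standard proof of the cited result: the hypothesis $f_*\mathcal{O}_X=\mathcal{O}_Y$ gives (via Stein factorization) connected complete fibres, the rigidity lemma applied to $F\times G\to Y$ shows that $f\circ\alpha$ is constant on the fibres of $\mathrm{id}_G\times f$, and the descent of this morphism to $a\colon G\times Y\to Y$ together with the action axioms and uniqueness are formal consequences of $(\mathrm{id}_G\times f)_*\mathcal{O}_{G\times X}=\mathcal{O}_{G\times Y}$ (flat base change), properness, and surjectivity. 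Two small points deserve a word of justification but are routine: the rigidity lemma must be applied with first factor $F$ merely connected and reduced rather than irreducible (this is fine, since a connected reduced complete scheme over $\mathbf{C}$ has only constant global functions, and density of $F\times(G\setminus W)$ still holds componentwise), and the continuity of the descended set map $a$ should be noted explicitly (it follows because $a^{-1}(Z)=(\mathrm{id}_G\times f)\bigl(\psi^{-1}(Z)\bigr)$ is closed for closed $Z$, as $\mathrm{id}_G\times f$ is proper and surjective) before one invokes the identification of regular functions to make $a$ a morphism.
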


As a consequence, we obtain the following correspondence. See also \cite[Lemmata 2.5, 2.6, 2.7]{DL15}.

\begin{cor}\label{co:equivariant blow-up}
	 Let $G$ be a connected linear algebraic group, $X$ be a normal variety and $Z$ a subvariety of $X$. Then there is an one-to-one correspondence between $G$-structures on $\operatorname{Bl}_Z X$ and those on $X$ such that $G$ acts on $Z$.
\end{cor}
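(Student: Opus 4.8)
The plan is to exhibit two mutually inverse constructions: pushing a $G$-action \emph{down} along the blow-up morphism by means of Blanchard's lemma (Theorem~\ref{theo:Blanchard}), and lifting a $G$-action \emph{up} by means of the universal property of blowing up.

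\emph{Descent.} Write $\pi\colon \widetilde{X}:=\operatorname{Bl}_Z X\to X$ for the blow-up morphism. It is proper and birational, and since $X$ is normal one has $\pi_*\mathcal{O}_{\widetilde X}=\mathcal{O}_X$; hence Theorem~\ref{theo:Blanchard} applies and every $G$-structure on $\widetilde X$ descends to a unique $G$-structure on $X$ for which $\pi$ is equivariant. It remains to check that $Z$ is $G$-stable. In all our applications $Z$ has codimension at least two, so that the exceptional divisor $E=\pi^{-1}(Z)$ coincides with the non-isomorphism locus $\Exc(\pi)$; the latter is intrinsic to $\pi$, hence preserved by every automorphism of $\widetilde X$ commuting with $\pi$, in particular by $G$. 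As $\pi$ is equivariant, $Z=\pi(\Exc(\pi))$ is then $G$-stable, i.e. $G$ acts on $Z$. (For a general centre one needs $Z$ to be nowhere a Cartier divisor for the identification $\pi(\Exc(\pi))=Z$ — and for the statement of the correspondence — to make sense.)

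\emph{Lifting.} Conversely, suppose $G$ acts on $X$ with $Z\subseteq X$ a $G$-stable subscheme, and let $a\colon G\times X\to X$ be the action. Consider $a\circ(\mathrm{id}_G\times\pi)\colon G\times\widetilde X\to X$. Because $Z$ is $G$-stable, the inverse image ideal sheaf of $\mathcal{I}_Z$ under this morphism is the pullback, along the projection to $\widetilde X$, of $\mathcal{I}_Z\cdot\mathcal{O}_{\widetilde X}=\mathcal{O}_{\widetilde X}(-E)$, which is invertible; by the universal property of the blow-up there is therefore a unique morphism $\widetilde a\colon G\times\widetilde X\to\widetilde X$ with $\pi\circ\widetilde a=a\circ(\mathrm{id}_G\times\pi)$. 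That $\widetilde a$ satisfies the associativity and unit axioms follows again from uniqueness in the universal property: the two morphisms $G\times G\times\widetilde X\rightrightarrows\widetilde X$ coming from the two bracketings, and the two morphisms $\widetilde X\rightrightarrows\widetilde X$ comparing $\widetilde a(e,-)$ with the identity, become equal after composition with $\pi$ (by the axioms for $a$), hence are equal. Thus $G$ acts on $\widetilde X$ and $\pi$ is equivariant by construction.

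\emph{The two constructions are inverse.} Starting from a $G$-action on $\widetilde X$, descending it and re-lifting returns the original action, by uniqueness in the universal property (both lifts make $\pi$ equivariant over the same action on $X$); starting from a $G$-action on $X$ with $Z$ stable, lifting it and descending returns the original action, by uniqueness in Theorem~\ref{theo:Blanchard}. Finally, a dense open $G$-orbit $U\subseteq X$ is necessarily disjoint from the proper $G$-stable closed set $Z$, and $\pi$ is an isomorphism over $X\setminus Z$, so $\pi^{-1}(U)\cong U$ is a dense open $G$-orbit in $\widetilde X$, and conversely; hence for $G=\Ga^n$ the correspondence restricts to a bijection between additive structures on $\widetilde X$ and additive structures on $X$ leaving $Z$ invariant. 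The only steps requiring genuine care are the two uniqueness arguments (which replace any explicit computation by the action axioms) and the identification $\pi(\Exc(\pi))=Z$, which is precisely why one restricts to centres that are nowhere Cartier divisors; everything else is formal.
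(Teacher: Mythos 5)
Your proof is correct and takes essentially the same route as the paper's: descent via Blanchard's lemma (the paper gets $\pi_*\mathcal{O}_{\operatorname{Bl}_Z X}=\mathcal{O}_X$ from Zariski's Main Theorem, you from normality plus proper birationality, which is the same fact) together with invariance of the exceptional locus, and lifting via the universal property of the blow-up applied to the invertible inverse-image ideal of $\mathcal{I}_Z$. You merely supply more detail than the paper — the verification of the action axioms for the lift, the check that the two constructions are mutually inverse, and the caveat about centres that are Cartier divisors.
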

\begin{proof}
	We have by Zariski's Main Theorem \cite[Corollary III.11.4]{Hartshorne} that $f_*\mathcal{O}_{\operatorname{Bl}_Z X}=\mathcal{O}_X$. By Blanchard's lemma, any $G$-structure descends in a unique way to $X$. Since any such structure has to fix the exceptional divisor, so the descended one fixes $Z$. Conversely, for any $G$-structure on $X$ fixing $Z$, we have that the inverse image of the ideal sheaf $\mathcal{I}_Z$ by $$\operatorname{Bl}_Z X\times G\to X\times G\to X$$ is the invertible sheaf $\mathcal{O}(1)$. So there exists a unique lifting of this $G$-structure to $\operatorname{Bl}_Z X$ by the universal property of blow-up.
\end{proof}
For $X$ a variety, let $\operatorname{Aut}^0(X)$ denote the connected component of automorphism group scheme of $X$ containing the identity.
\begin{cor}[{\cite[Corollary 7.2.3]{Bri17}}]\label{co:productauto}
	Let $X,Y$ be proper varieties and let $p_1,p_2$ be the projections from $X\times Y$ to $X$ and $Y$, respectively. Then the natural morphism
	$$p_{1*}\times p_{2*}:\operatorname{Aut}^0(X\times Y)\to \operatorname{Aut}^0(X)\times \operatorname{Aut}^0(Y)$$
	is an isomorphism.
\end{cor}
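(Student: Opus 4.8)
The plan is to exhibit an explicit two-sided inverse to the natural morphism, so that there is nothing to estimate or classify. Write $G=\operatorname{Aut}^0(X\times Y)$ and $\Phi=p_{1*}\times p_{2*}$. First I would record why $\Phi$ even exists as stated: since $Y$ is a proper variety one has $\operatorname{H}^0(Y,\mathcal{O}_Y)=\mathbf{C}$, hence by the projection formula (Künneth) $p_{1*}\mathcal{O}_{X\times Y}=\mathcal{O}_X$, and symmetrically $p_{2*}\mathcal{O}_{X\times Y}=\mathcal{O}_Y$. Thus Blanchard's lemma (Theorem \ref{theo:Blanchard}) applies to each projection: the tautological $G$-action on $X\times Y$ descends uniquely along $p_1$ to a $G$-action on $X$ and along $p_2$ to a $G$-action on $Y$, these descents making $p_1$ and $p_2$ equivariant. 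By definition these descents are the homomorphisms $p_{1*}\colon G\to\operatorname{Aut}^0(X)$ and $p_{2*}\colon G\to\operatorname{Aut}^0(Y)$, whence $\Phi$. I would emphasise now the two features of Blanchard's lemma I intend to use: the \emph{uniqueness} of the descended structure, and the \emph{equivariance} of $p_1,p_2$.

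Next I would construct a morphism in the opposite direction, namely $\Psi\colon\operatorname{Aut}^0(X)\times\operatorname{Aut}^0(Y)\to G$ sending $(f,g)$ to the product automorphism $f\times g$. Functorially, a pair of $S$-automorphisms of $X_S$ and of $Y_S$ produces the $S$-automorphism $f\times_S g$ of $(X\times Y)_S=X_S\times_S Y_S$, so $\Psi$ is indeed a morphism of group schemes; it lands in the identity component because its source is connected and it carries $(\operatorname{id},\operatorname{id})$ to $\operatorname{id}$.

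It then remains to check that $\Phi$ and $\Psi$ are mutually inverse, and both verifications are short. For $\Phi\circ\Psi=\operatorname{id}$: the product automorphism $f\times g$ satisfies $p_1\circ(f\times g)=f\circ p_1$ and $p_2\circ(f\times g)=g\circ p_2$, so it makes $p_1$ equivariant for the action of $f$ on $X$ and $p_2$ equivariant for the action of $g$ on $Y$; by the uniqueness clause in Blanchard's lemma this forces $p_{1*}(f\times g)=f$ and $p_{2*}(f\times g)=g$, i.e.\ $\Phi(\Psi(f,g))=(f,g)$. For $\Psi\circ\Phi=\operatorname{id}$: for any $S$-point $\phi$ of $G$, equivariance of the projections reads $p_i\circ\phi=(p_{i*}\phi)\circ p_i$ for $i=1,2$; since a morphism to a product is determined by its two components, this identifies $\phi$ with the automorphism $(p_{1*}\phi)\times(p_{2*}\phi)=\Psi(\Phi(\phi))$. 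Hence $\Phi$ is an isomorphism with inverse $\Psi$.

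I do not expect a genuine obstacle here; the argument is entirely formal once Blanchard's lemma is in hand. The only points that call for a little care are checking $p_{1*}\mathcal{O}_{X\times Y}=\mathcal{O}_X$ (so that Blanchard's lemma legitimately applies to a projection, not just to a contraction in an MMP), and carrying the equivariance identities at the level of arbitrary $S$-points rather than only closed points, so that one genuinely gets an isomorphism of group schemes and not merely a bijection on $\mathbf{C}$-points.
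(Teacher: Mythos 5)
Your proposal is correct and follows essentially the same route as the paper: verify $p_{i*}\mathcal{O}_{X\times Y}=\mathcal{O}_X$ (resp. $\mathcal{O}_Y$) using properness, then apply Blanchard's lemma to the two projections. The only difference is that where the paper delegates the conclusion to \cite[Corollary 7.2.2(1)]{Bri17}, you spell out the explicit inverse $(f,g)\mapsto f\times g$ and check the two compositions via the uniqueness and equivariance clauses, which is a harmless (and welcome) expansion of the same argument.
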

\begin{proof}
	We have $$p_{1*}(\mathcal{O}_{X\times Y})=\mathcal{O}_X,\quad  p_{2*}(\mathcal{O}_{X\times Y})=\mathcal{O}_Y$$
	since both $X$ and $Y$ are proper varieties. We can apply Blanchard's lemma and \cite[Corollary 7.2.2(1)]{Bri17} to the projections $p_1$ and $p_2$ in order to get the desired isomorphism.
\end{proof}

\section{Hassett-Tschinkel correspondance}\label{section:HT}

Let us begin by recalling some results in \cite[\S 2.4]{HT99}, where Hassett and Tschinkel establish a dictionary between cyclic representations of $\mathbf{G}_a^n$ and finite-dimensional local algebras. We follow the presentation in \cite[\S 2]{Sha09} and \cite[\S 1]{AS11}.

Let us denote by $\mathfrak{g}=\operatorname{Lie}(\Ga^n)$ the Lie algebra of $\Ga^n$ and by $\mathfrak{U}(\mathfrak{g})$ its universal enveloping algebra. Since the algebraic group $\Ga^n$ is commutative, the algebra $\mathfrak{U}(\mathfrak{g})$ coincides with the symmetric algebra $\mathfrak{S}(\mathfrak{g})$. In other words, if we choose a basis of $\mathfrak{g}$ given by $S_1=\frac{\partial}{\partial x_1},\ldots, S_n=\frac{\partial}{\partial x_n}$ then there is an isomorphism $\mathfrak{U}(\mathfrak{g})\cong \mathbf{C}[S_1,\ldots,S_n]$ such that $\mathfrak{g}$ is identified with the subspace $\langle S_1,\ldots, S_n \rangle$.

Let $\rho:\Ga^n \to \operatorname{GL}_\ell(\mathbf{C})$ be a faithful algebraic representation and let ${\tau: \mathfrak{U}(\mathfrak{g})\to \operatorname{Mat}_{\ell}(\mathbf{C})}$ be the induced representation of $\mathfrak{U}(\mathfrak{g})$. The algebra $R=\tau(\mathfrak{U}(\mathfrak{g})) \cong \mathfrak{U}(\mathfrak{g})/\ker(\tau)$ is local of finite dimension, since $\tau(S_1),\ldots,\tau(S_n)$ are commuting nilpotent elements which generate this algebra. If we consider $X_1=\tau(S_1),\ldots,X_n=\tau(S_n)$ in $R$ then the maximal ideal of $R$ is given by $\mathfrak{m}_R=(X_1,\ldots,X_n)$ and the subspace $U=\tau(\mathfrak{g})=\langle X_1,\ldots,X_n\rangle $ generates the algebra $R$.

The Hassett-Tschinkel correspondence goes as follows: Given a faithful algebraic representation $\rho:\Ga^n \to \operatorname{GL}_\ell(\mathbf{C})$ and a vector $v\in \mathbf{C}^\ell$ such that $\langle \rho(\Ga^n)v\rangle = \mathbf{C}^\ell$, i.e., a {\it cyclic vector}, we have that $\ker(\tau)$ is given by the ideal $I=\{x\in \mathfrak{U}(\mathfrak{g})\;|\;\tau(x)v=0\}$ and hence $R\cong \mathfrak{U}(\mathfrak{g})/I\cong \tau(\mathfrak{U}(\mathfrak{g}))v = \mathbf{C}^\ell$. The action of an element $\tau(x)$ on $\mathbf{C}^\ell$ corresponds via these isomorphisms to the multiplication by $\tau(x)$ on $R$, and the cyclic vector $v\in \mathbf{C}^\ell$ corresponds to the unity of the ring $R$. Moreover, since $\Ga^n=\exp(\mathfrak{g})$ we have that the action of $\Ga^n$ on $\mathbf{C}^\ell$ corresponds to the multiplication by elements of $\exp(U)$ on $R$.

Conversely, given a commutative local algebra $(R,\mathfrak{m}_R)$ supported at the origin of dimension $\ell$ and $U\subseteq \mathfrak{m}_R$ a subspace that generates $R$ as an algebra, we can define a additive faithful representation by considering a basis $X_1,\ldots,X_n$ of $U$ and therefore getting an isomorphism $R\cong \mathbf{C}[S_1,\ldots,S_n]/I$ which is induced by the surjective homomorphism $\mathbf{C}[S_1,\ldots,S_n]\to R,\;S_i\to X_i$. In fact, we get a faithful representation by considering $\rho:\Ga^n=\exp(U)\to R\cong \mathbf{C}^\ell$ such that $\rho((a_1,\ldots,a_n))$ acts via multiplication by $\exp(a_1X_1+\ldots+a_nX_n)$ on $R$. In this way, the unit of $R$ corresponds to a cyclic vector, since $U$ generates the algebra $R$. 

The above correspondence is summarized in \cite[Theorem 2.14]{HT99} and reads as follows.

\begin{thm}[Hassett \& Tschinkel] There is a bijection between the following:
\begin{enumerate}
 \item Isomorphism classes of pairs $(\rho,v)$, where $\rho: \Ga^n\to \operatorname{GL}_\ell(\mathbf{C})$ is a faithful representation and $v$ is a cyclic vector;
 \item Isomorphism classes of pairs $(R,U)$, where $(R,\mathfrak{m}_R)$ is a local algebra supported at the origin of dimension $\ell$ and $U$ is an $n$-dimensional linear subspace of $\mathfrak{m}_R$ that generates $R$ as an algebra.
 \end{enumerate}
\end{thm}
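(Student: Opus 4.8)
The plan is to make explicit the two constructions sketched in the paragraphs preceding the statement, to verify that they are mutually inverse, and to check that they respect the equivalence relations on the two sides.

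First I would pass from (1) to (2). Given a faithful representation $\rho\colon\Ga^n\to\operatorname{GL}_\ell(\mathbf{C})$ with cyclic vector $v$, differentiation produces an algebra homomorphism $\tau\colon\mathfrak{U}(\mathfrak{g})\cong\mathbf{C}[S_1,\dots,S_n]\to\operatorname{Mat}_\ell(\mathbf{C})$; since $\Ga^n$ is unipotent, the elements $X_i:=\tau(S_i)$ are commuting nilpotent matrices, so $R:=\tau(\mathfrak{U}(\mathfrak{g}))$ is a finite-dimensional commutative local $\mathbf{C}$-algebra whose maximal ideal $(X_1,\dots,X_n)$ is generated as an algebra by $U:=\tau(\mathfrak{g})=\langle X_1,\dots,X_n\rangle$. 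The cyclicity of $v$ says precisely that $r\mapsto\tau(r)v$ is a linear isomorphism $R\to\mathbf{C}^\ell$ intertwining multiplication on $R$ with the action on $\mathbf{C}^\ell$; in particular $\dim_{\mathbf{C}}R=\ell$ and $v$ corresponds to $1\in R$. Faithfulness of $\rho$ forces $\dim U=n$: if $\sum a_iX_i=0$ then $\exp(\sum a_iX_i)=\mathrm{id}$, hence $(a_1,\dots,a_n)\in\ker\rho=0$. Thus $(R,U)$ is a pair as in (2).

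Conversely, from (2) to (1): fix a basis $X_1,\dots,X_n$ of $U$. As $U\subseteq\mathfrak{m}_R$ each $X_i$ is nilpotent, so $\exp(a_1X_1+\dots+a_nX_n)$ is a well-defined polynomial expression giving a unipotent element of $\operatorname{GL}(R)\cong\operatorname{GL}_\ell(\mathbf{C})$, and $\rho(a_1,\dots,a_n):=$ left multiplication by this element defines a homomorphism $\Ga^n\to\operatorname{GL}_\ell(\mathbf{C})$; it is faithful by the same linear-independence argument, and $1\in R$ is a cyclic vector because the linear span of $\rho(\Ga^n)\cdot 1$ is the span of $\exp(U)$, which contains $U$ and is closed under multiplication, hence equals the subalgebra generated by $U$, namely $R$. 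I would then check the two constructions are mutually inverse: starting from $(R,U)$, the representation attached to the $\exp$-action has $\tau(S_i)$ equal to left multiplication by $X_i$, recovering $(R,U)$ with cyclic vector $1$; starting from $(\rho,v)$, the isomorphism $R\to\mathbf{C}^\ell$, $r\mapsto\tau(r)v$, conjugates the $\exp$-action on $R$ to $\rho$ and sends $1$ to $v$. For compatibility with isomorphisms one unwinds the definitions on both sides: an isomorphism $(\rho,v)\cong(\rho',v')$ is a $g\in\operatorname{GL}_\ell(\mathbf{C})$ with $g\rho g^{-1}=\rho'$ and $gv=v'$, which restricts to an algebra isomorphism of the images of $\tau$ and $\tau'$ carrying $U$ to $U'$; and an algebra isomorphism $R\to R'$ sending $U$ to $U'$ is in particular a linear isomorphism conjugating the $\exp$-actions and sending $1$ to $1$.

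I expect the only genuinely delicate points to be bookkeeping: verifying that ``faithful as an algebraic-group representation'' is equivalent to linear independence of $X_1,\dots,X_n$ (which relies on $\Ga^n$ being unipotent, so that $\exp$ is an isomorphism onto its image with trivial kernel), and checking that the equivalence relations on the two sides genuinely correspond — that is, that each choice made in the constructions (a basis of $U$, an identification $\mathbf{C}^\ell\cong R$) alters the resulting data only within its isomorphism class.
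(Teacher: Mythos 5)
Your proposal is correct and follows essentially the same route as the paper, which records this theorem by recapitulating the two Hassett--Tschinkel constructions (differentiating $\rho$ to get $\tau$ and taking $R=\tau(\mathfrak{U}(\mathfrak{g}))$, $U=\tau(\mathfrak{g})$, $v\leftrightarrow 1$; and conversely exponentiating a generating subspace $U\subseteq\mathfrak{m}_R$) without further argument. You supply the verifications the paper leaves implicit — that $r\mapsto\tau(r)v$ is an isomorphism, that faithfulness corresponds to $\dim U=n$, mutual inverseness, and compatibility with the isomorphism relations — and these are all sound.
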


Moreover, a faithful algebraic representation $\rho:\Ga^n\to \operatorname{GL}_\ell(\mathbf{C})$ corresponds to a unique effective $\Ga^n$-action on the projective space $\mathbf{P}^{\ell-1}$, and vice-versa (cf. \cite[Lemma 2.3]{DL15}). In particular, an effective action of the additive group $\Ga^n$ on $\mathbf{P}^{\ell-1}$ is generically transitive if and only if $n=\ell-1$. As a consequence, Hassett and Tschinkel obtain the following characterization of generically transitive actions of the additive group on the projective space.

\begin{propo}[{\cite[Proposition 2.15]{HT99}}] There is a bijection between the following:
\begin{enumerate}
 \item Isomorphism classes of generically transitive $\Ga^n$-actions on $\mathbf{P}^n$;
 \item Isomorphism classes of local commutative algebras of dimension $n+1$.
\end{enumerate}
\end{propo}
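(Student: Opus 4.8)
The plan is to obtain this Proposition as a direct consequence of the Hassett--Tschinkel correspondence stated above, specialised to the case $\ell = n+1$, together with the two facts recalled just before it: that a faithful algebraic representation $\rho\colon\Ga^n\to\operatorname{GL}_\ell(\mathbf{C})$ is the same datum as an effective $\Ga^n$-action on $\mathbf{P}^{\ell-1}$ (up to $\operatorname{GL}_\ell(\mathbf{C})$-conjugacy, respectively up to isomorphism), and that such an action is generically transitive exactly when $n=\ell-1$. So I would begin by restricting the bijection of the previous theorem to pairs $(\rho,v)$ with $\rho\colon\Ga^n\to\operatorname{GL}_{n+1}(\mathbf{C})$ and, correspondingly, to pairs $(R,U)$ with $\dim_{\mathbf{C}} R = n+1$ and $\dim_{\mathbf{C}} U = n$.

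The first key point is that in this range the auxiliary datum $U$ is forced. Since $R$ is a finite-dimensional local $\mathbf{C}$-algebra, $R = \mathbf{C}\cdot 1 \oplus \mathfrak{m}_R$ with $\dim_{\mathbf{C}}\mathfrak{m}_R = n$, so the inclusion $U\subseteq\mathfrak{m}_R$ together with $\dim_{\mathbf{C}} U = n$ gives $U = \mathfrak{m}_R$, and $\mathfrak{m}_R$ visibly generates $R$ as an algebra. As any algebra isomorphism preserves the maximal ideal, an isomorphism of pairs $(R,\mathfrak{m}_R)$ is just an isomorphism of algebras; hence the right-hand side of the previous theorem, for $\ell=n+1$, is precisely the set of isomorphism classes of local commutative algebras of dimension $n+1$. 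Symmetrically, on the left-hand side one checks that the cyclic vectors of a generically transitive $\Ga^n$-action on $\mathbf{P}^n$ are exactly the lifts to $\mathbf{C}^{n+1}$ of the points of its (unique) open orbit: in the algebra picture the orbit of the point corresponding to $1\in R$ is the image of $1+\mathfrak{m}_R$ in $\mathbf{P}(R)$, which is an $n$-dimensional irreducible locally closed subset, hence dense. Thus passing to the induced action on $\mathbf{P}^n$ gives a map from isomorphism classes of pairs $(\rho,v)$ to isomorphism classes of generically transitive $\Ga^n$-actions on $\mathbf{P}^n$, and this map is surjective.

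The injectivity of this last map is the point that needs a little care, and is really the only content of the Proposition beyond the previous theorem; this is where I expect the main (and rather mild) obstacle to lie. If $(\rho_1,v_1)$ and $(\rho_2,v_2)$ induce isomorphic actions on $\mathbf{P}^n$, one conjugates $\rho_1$ by a lift $g\in\operatorname{GL}_{n+1}(\mathbf{C})$ of the projective transformation realising the isomorphism; since $\Ga^n$ has no non-trivial characters, the lift of a homomorphism $\Ga^n\to\operatorname{PGL}_{n+1}(\mathbf{C})$ to $\operatorname{GL}_{n+1}(\mathbf{C})$ is unique, so one is reduced to the case $\rho_1=\rho_2=:\rho$ with two cyclic vectors $v_1,v_2$ for $\rho$. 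By the previous step $[v_1]$ and $[v_2]$ lie in the single open orbit, so $v_2=\lambda\,\rho(a)v_1$ for some $\lambda\in\mathbf{C}^\times$ and $a\in\Ga^n$; the element $h:=\lambda\,\rho(a)\in\operatorname{GL}_{n+1}(\mathbf{C})$ centralises $\rho(\Ga^n)$ (it is a scalar times an element of the abelian image $\rho(\Ga^n)$) and sends $v_1$ to $v_2$, whence $(\rho,v_1)\cong(\rho,v_2)$. Composing the two specialised bijections then yields the asserted bijection between isomorphism classes of generically transitive $\Ga^n$-actions on $\mathbf{P}^n$ and isomorphism classes of local commutative algebras of dimension $n+1$. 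In short, the whole difficulty is the isomorphism bookkeeping that ensures the auxiliary data ($v$ on the representation side, $U$ on the algebra side) becomes redundant once the action is generically transitive.
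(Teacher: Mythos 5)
Your argument is correct and follows exactly the route the paper intends: the paper gives no proof of its own, merely citing \cite[Proposition 2.15]{HT99} and presenting the statement as the specialisation $\ell=n+1$ of the preceding Hassett--Tschinkel bijection. Your filling-in of the details --- that $U=\mathfrak{m}_R$ is forced, that the cyclic vectors are precisely the lifts of points of the open orbit (equivalently the units of $R$), and that the cyclic vector becomes redundant because any two lie in a single orbit up to scalars while lifts of a projective representation of $\Ga^n$ are unique for lack of characters --- is accurate and supplies precisely the bookkeeping the paper leaves implicit.
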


The classification of local commutative algebras of dimension 3 leads to the following result.

\begin{propo}[{\cite[Proposition 3.2]{HT99}}]\label{prop:P2} There are two distinct $\mathbf{G}_a^2$-structures on $\mathbf{P}^2$. They are given by the following representations of $\mathbf{G}_a^2$:
$$\tau(a_1,a_2)=
\begin{pmatrix}
1 & 0 & a_2 \\
0 & 1 & a_1 \\
0 & 0 & 1
\end{pmatrix} \text{ and  } \rho(a_1,a_2)=
\begin{pmatrix}
1 & a_1 & a_2+\frac{1}{2}a_1^2 \\
0 & 1 & a_1 \\
0 & 0 & 1
\end{pmatrix}.$$
In particular, the fixed locus are given by a line for $\tau$ and a point for $\rho$.
\end{propo}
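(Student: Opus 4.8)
The plan is to prove Proposition \ref{prop:P2} as a direct application of the Hassett--Tschinkel correspondence (Theorem 3.x above) together with the classical classification of finite-dimensional local commutative algebras. By the Proposition preceding the statement, generically transitive $\Ga^2$-actions on $\mathbf{P}^2$ are in bijection with isomorphism classes of local commutative $\mathbf{C}$-algebras of dimension $3$. So the first step is simply to enumerate these algebras: if $(R,\mathfrak{m}_R)$ is local of length $3$, then $\dim_{\mathbf{C}} \mathfrak{m}_R = 2$ and $\mathfrak{m}_R^3 = 0$ (since the quotients $\mathfrak{m}_R^i/\mathfrak{m}_R^{i+1}$ are nonzero only for finitely many $i$ and their dimensions sum to $3$ with the $i=0$ term equal to $1$). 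There are exactly two possibilities according to $\dim_{\mathbf{C}} \mathfrak{m}_R/\mathfrak{m}_R^2$: either this dimension is $2$, forcing $\mathfrak{m}_R^2 = 0$ and $R \cong \mathbf{C}[x,y]/(x,y)^2$; or it is $1$, so that $\mathfrak{m}_R$ is generated by a single element $t$, giving $R \cong \mathbf{C}[t]/(t^3)$. In both cases there is only one choice of the generating subspace $U \subseteq \mathfrak{m}_R$, namely $U = \mathfrak{m}_R$ itself, since $U$ must be $2$-dimensional (as $n = 2$) and contained in the $2$-dimensional space $\mathfrak{m}_R$; hence the pairs $(R,U)$ are also classified by just these two algebras.

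Next I would make the two representations explicit by running the correspondence in the ``algebra to representation'' direction, to verify they are precisely the matrices $\tau$ and $\rho$ in the statement. For $R = \mathbf{C}[x,y]/(x,y)^2$, choose the basis $\{1, x, y\}$ of $R$ and $U = \langle x, y\rangle$. Multiplication by $a_1 x + a_2 y$ is nilpotent of order $2$, so $\exp(a_1 x + a_2 y) = 1 + a_1 x + a_2 y$ acts on $R$; writing this operator in the basis $\{y, x, 1\}$ (to match the conventions) recovers the matrix $\tau(a_1,a_2)$, and the fixed locus in $\mathbf{P}^2 = \mathbf{P}(R)$ is the set of eigenvectors common to all these operators, which is the projectivization of $\langle x, y\rangle$, i.e. a line. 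For $R = \mathbf{C}[t]/(t^3)$, choose the basis $\{1, t, t^2\}$ and $U = \langle t, t^2\rangle$; multiplication by $a_1 t + a_2 t^2$ has cube zero, so $\exp(a_1 t + a_2 t^2) = 1 + (a_1 t + a_2 t^2) + \tfrac12 a_1^2 t^2$, and writing this in the basis $\{t^2, t, 1\}$ gives exactly $\rho(a_1,a_2)$; here the only common eigenvector is $\mathbf{C}\cdot t^2$, so the fixed locus is a single point. This shows the two structures are genuinely non-isomorphic (one cannot have a line of fixed points isomorphic to a structure with an isolated fixed point), completing both the classification and the description of fixed loci.

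The only genuinely delicate point is the claim that these are \emph{all} generically transitive $\Ga^2$-structures, which rests entirely on the Hassett--Tschinkel bijection being stated for \emph{all} effective generically transitive actions, not merely the ones arising from a chosen matrix form; but this is exactly the content of the Proposition quoted just before the statement, so there is nothing further to prove. A minor bookkeeping subtlety is checking that the isomorphism class of the pair $(R,U)$ rather than of $R$ alone is what is being enumerated, but as noted this distinction collapses in dimension $3$ because $U$ is forced to equal $\mathfrak{m}_R$. Thus I expect the ``hard part'' to be purely cosmetic: pinning down the basis orderings so that the exponentiated operators match the precise matrices displayed, and confirming the fixed-locus assertion by locating the common eigenvectors.
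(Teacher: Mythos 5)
Your proof is correct and follows exactly the route the paper intends: the paper itself states this result as a citation of Hassett--Tschinkel without reproving it, but your argument (classify local algebras of length $3$ as $\mathbf{C}[x,y]/(x,y)^2$ and $\mathbf{C}[t]/(t^3)$, note that $U=\mathfrak{m}_R$ is forced, then exponentiate to recover $\tau$ and $\rho$ and read off the fixed loci) is precisely the method the paper uses for the analogous classification on $\mathbf{P}^3$ via Poonen's table. Your computations of the matrices and of the common kernels of $\tau-I$ and $\rho-I$ check out, so there is nothing to add.
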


Let us mention that there is an inaccuracy\footnote{See also \cite[Lemma 1]{Sha09} for a correction in the classification of local commutative algebras of dimension 5. We also refer the interested reader to \cite{Poo08}.} in the list \cite[Proposition 3.3]{HT99} of local commutative algebras of dimension 4. The right list is given by the following ideals.

\begin{propo} There are four distinct $\mathbf{G}_a^3$-structures on $\mathbf{P}^3$. They correspond to the quotients of $\mathbf{C}[S_1,S_2,S_3]$ by the following ideals:
$$
\begin{array}{l}
I_1 = \langle S_1^2-S_2,S_1S_2-S_3,S_1S_3 \rangle \\
I_2 = \langle S_1^2 - S_2, S_1S_2,S_1S_3, S_3^2 \rangle \\
I_3 = \langle S_1^2,S_1S_2-S_3,S_2^2 \rangle \\
I_4 = \langle S_1^2, S_1S_2, S_2^2, S_2S_3, S_3^2, S_1S_3 \rangle.
\end{array}
$$
\end{propo}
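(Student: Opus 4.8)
The plan is to invoke the Hassett–Tschinkel correspondence (their Proposition 2.15, recalled just above) which reduces the claim to classifying all local commutative $\mathbf{C}$-algebras of dimension $4$, together with the observation that for $\mathbf{P}^3$ one must take the distinguished subspace $U=\mathfrak{m}_R$ itself, since $\dim_{\mathbf{C}}\mathfrak{m}_R=3=n$. So the whole statement is equivalent to: \emph{up to isomorphism there are exactly four local commutative $\mathbf{C}$-algebras $(R,\mathfrak{m}_R)$ with $\dim_{\mathbf{C}} R=4$, and they are the quotients by $I_1,\dots,I_4$.} The proof is then a finite, purely algebraic classification. I would not reproduce the (flawed) argument of \cite[Proposition 3.3]{HT99} but carry out the case analysis cleanly.

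The key steps, in order, are as follows. First, set $\mathfrak{m}=\mathfrak{m}_R$ and stratify by the dimension of $\mathfrak{m}/\mathfrak{m}^2$, i.e.\ the minimal number of generators $d$ of $\mathfrak{m}$. Since $\mathfrak{m}$ is nilpotent and $\dim R=4$, we have $\dim\mathfrak{m}=3$ and $1\le d\le 3$. Second, treat the three cases.
\begin{itemize}
\item If $d=1$, then $R\cong\mathbf{C}[S]/(S^4)$; taking $S_1=S$ and $S_2=S^2,S_3=S^3$ as images of the coordinate directions recovers exactly $R\cong\mathbf{C}[S_1,S_2,S_3]/I_1$ (the chain algebra, with $1$-dimensional socle).
\item If $d=3$, then $\mathfrak{m}=\langle S_1,S_2,S_3\rangle$ forces $\mathfrak{m}^2=0$, hence $R\cong\mathbf{C}[S_1,S_2,S_3]/I_4$ (with $3$-dimensional socle).
\item If $d=2$, then $\dim\mathfrak{m}/\mathfrak{m}^2=2$ and $\dim\mathfrak{m}^2=1$, so $\mathfrak{m}^2$ is spanned by a single element $t$ with $\mathfrak{m}^3=0$. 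Writing $\mathfrak m=\langle x,y\rangle$, the symmetric bilinear (cup-product) form $\mathfrak{m}/\mathfrak{m}^2\times\mathfrak{m}/\mathfrak{m}^2\to\mathfrak{m}^2\cong\mathbf{C}$ is nonzero (else $t\notin\mathfrak m^2$), and over $\mathbf{C}$ a nonzero symmetric form in two variables has rank $1$ or $2$. Rank $2$ gives, after diagonalizing and rescaling, the relations $x^2=t,\ y^2\in\mathbf{C}t,\ xy=0$ — one checks $xy=0$ and $x^2=y^2$ can be arranged, producing $R\cong\mathbf{C}[S_1,S_2,S_3]/I_2$ after identifying the third coordinate direction with an element of $\mathfrak m^2$ forced by $\dim U=3>d$. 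Rank $1$ gives $x^2=t,\ xy=0=y^2$, i.e.\ $R\cong\mathbf{C}[S_1,S_2,S_3]/I_3$.
\end{itemize}
Third, confirm that $I_1,\dots,I_4$ genuinely define pairwise non-isomorphic algebras — e.g.\ by the invariants $(d,\dim\operatorname{socle})$, which take the four distinct values $(1,1)$, $(2,2)$, $(2,1)$, $(3,3)$ — and that in each case $\mathfrak m$ itself is a valid generating subspace, which is automatic, so each algebra yields exactly one $\mathbf G_a^3$-structure on $\mathbf P^3$. Finally, translate back through Hassett–Tschinkel to conclude.

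**The main obstacle** is the $d=2$ case: one must be careful that the ambient $3$-dimensional subspace $U$ used in the correspondence is all of $\mathfrak{m}$ (forced here), so the chosen basis $S_1,S_2,S_3$ of $U$ necessarily includes a direction landing in $\mathfrak{m}^2$, and the presentation $\mathbf{C}[S_1,S_2,S_3]/I$ must be recomputed with respect to \emph{that} basis rather than a minimal generating set $\{x,y\}$; this is exactly where \cite[Proposition 3.3]{HT99} went astray. The genuinely delicate point is verifying that the two sub-cases (rank $1$ versus rank $2$ of the cup form) exhaust all possibilities and that no further isomorphisms collapse them — a short but attention-requiring normal-form computation over $\mathbf{C}$, relying on the algebraic closedness to take square roots when rescaling. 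The rest of the argument is bookkeeping.
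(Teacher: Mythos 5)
Your overall strategy is the same as the paper's: reduce via the Hassett--Tschinkel correspondence to classifying four-dimensional local commutative $\mathbf{C}$-algebras and stratify by the Hilbert--Samuel function $\dim_{\mathbf{C}}(\mathfrak{m}^i/\mathfrak{m}^{i+1})$. The paper simply quotes Poonen's table of such algebras for the case analysis, whereas you carry out the $(1,2,1)$ case by hand via the cup-product form $\mathfrak{m}/\mathfrak{m}^2\times\mathfrak{m}/\mathfrak{m}^2\to\mathfrak{m}^2$; that is a legitimate and more self-contained route, and your treatment of $d=1$, $d=3$ and of the pairwise non-isomorphism is fine.

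However, in the $d=2$ case your two identifications are swapped. The rank-$2$ form, after diagonalizing to $x^2=y^2=t$, $xy=0$, is equivalent over $\mathbf{C}$ (set $u=x+iy$, $v=x-iy$, so that $u^2=v^2=0$ and $uv=2t$) to $\mathbf{C}[x,y]/\langle x^2,y^2\rangle$, which is $\mathbf{C}[S_1,S_2,S_3]/I_3$ (take $S_1=x$, $S_2=y$, $S_3=xy$); its socle is one-dimensional. The rank-$1$ form gives $\mathbf{C}[x,y]/\langle x^2,xy,y^3\rangle$, which is $\mathbf{C}[S_1,S_2,S_3]/I_2$ (take $S_1=y$, $S_2=y^2$, $S_3=x$); its socle is two-dimensional, spanned by $x$ and $y^2$. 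So rank $2$ produces $I_3$ and rank $1$ produces $I_2$, the opposite of what you assert. Your own invariant list, which assigns socle dimension $2$ to $I_2$ and $1$ to $I_3$, already contradicts your rank assignment, since any lift of the radical of the cup form annihilates $\mathfrak{m}$ and hence lands in the socle, so a rank-$1$ form forces socle dimension $2$. The final list of four ideals is unaffected, but the two claimed isomorphisms in the $d=2$ bullet are false as stated and must be interchanged.
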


\begin{proof}
We keep the same notation as in \cite{Poo08}. All possible local commutative algebras $(R,\mathfrak{m}_R)$ of dimension $n=4$ are listed in \cite[Table 1]{Poo08} and they are classified depending on the possible values $d_i=\dim_\mathbf{C}(\mathfrak{m}_R^i\slash \mathfrak{m}_R^{i+1})$ of their Hilbert-Samuel function. It follows from \cite[Lemma 1.3]{Poo08} that if the tangent space of $R$ has dimension $d_1=1$ then
$$R\cong \mathbf{C}[S_1,S_2,S_3]\slash I_1 \cong \mathbf{C}[x]\slash \langle x^4 \rangle. $$
Similarly, if $d_1=3$ then
$$R\cong \mathbf{C}[S_1,S_2,S_3]\slash I_4 \cong \mathbf{C}[x,y,z]\slash \langle x,y,z\rangle^2. $$
The remaining case is $(d_1,d_2)=(2,1)$, for which we have that either
$$R\cong \mathbf{C}[S_1,S_2,S_3]\slash I_2 \cong \mathbf{C}[x,y]\slash \langle x^2,xy,y^3 \rangle $$
or
$$R\cong \mathbf{C}[S_1,S_2,S_3]\slash I_3 \cong \mathbf{C}[x,y]\slash \langle x^2,y^2 \rangle. $$
The result follows from \cite[Table 1]{Poo08}
\end{proof}

The correspondence above leads therefore to the following result.

\begin{cor}\label{coro:P3} There are four distinct $\mathbf{G}_a^3$-structures on $\mathbf{P}^3$. They are given by the following representations of $\mathbf{G}_a^3$:
$$
\begin{array}{l}
\rho_1(a_1,a_2,a_3)=
\begin{pmatrix}
1 & a_1 & a_2+\frac{1}{2}a_1^2 & a_3 + a_1a_2 \\
0 & 1 & a_1 & a_2+\frac{1}{2}a_1^2 \\
0 & 0 & 1 & a_1 \\
0 & 0 & 0 & 1 
\end{pmatrix}\\
\rho_2(a_1,a_2,a_3)=
\begin{pmatrix}
1 & 0 & 0 & a_3 \\
0 & 1 & a_1 & a_2+\frac{1}{2}a_1^2 \\
0 & 0 & 1 & a_1 \\
0 & 0 & 0 & 1 
\end{pmatrix}\\
\rho_3(a_1,a_2,a_3)=
\begin{pmatrix}
1 & a_1 & a_2 & a_3 + a_1a_2 \\
0 & 1 & 0 & a_2 \\
0 & 0 & 1 & a_1 \\
0 & 0 & 0 & 1 
\end{pmatrix}\\ 
\rho_4(a_1,a_2,a_3)=
\begin{pmatrix}
1 & 0 & 0 & a_3 \\
0 & 1 & 0 & a_2 \\
0 & 0 & 1 & a_1 \\
0 & 0 & 0 & 1 
\end{pmatrix}.
\end{array}$$
In particular, the fixed locus are given by a point for $\rho_1$ and $\rho_3$, a line for $\rho_2$ and a plane for $\rho_4$.
\end{cor}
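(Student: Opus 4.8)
The statement is a translation of the preceding Proposition through the Hassett--Tschinkel dictionary, supplemented by one general observation about fixed loci, so I would split the argument into three steps.

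First, the preceding Proposition exhibits the four isomorphism classes of local commutative $\mathbf{C}$-algebras of dimension $4$ as the quotients $R_j=\mathbf{C}[S_1,S_2,S_3]/I_j$, $j=1,\dots,4$; by Proposition 2.15 (and the discussion following Theorem 2.14) these biject with the generically transitive $\Ga^3$-structures on $\mathbf{P}^3$, via $\mathbf{P}^3=\mathbf{P}(R_j)$ with $\Ga^3=\exp(U)$ acting by multiplication, $U=\langle S_1,S_2,S_3\rangle$. To produce the explicit representations $\rho_j$, I would fix for each $R_j$ a $\mathbf{C}$-basis of monomials adapted to the $\mathfrak{m}_{R_j}$-adic filtration — e.g. $(1,S_1,S_1^2,S_1^3)$ for $R_1\cong\mathbf{C}[x]/(x^4)$, and the evident short monomial bases for $R_2,R_3,R_4$ — and compute the matrix of multiplication by $\exp(a_1S_1+a_2S_2+a_3S_3)$ in that basis, truncating the exponential series modulo $I_j$. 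Since $S_1,S_2,S_3$ act by commuting nilpotent operators, this is a finite computation producing a unipotent homomorphism $\Ga^3\to\operatorname{GL}_4(\mathbf{C})$; ordering the basis by decreasing $\mathfrak{m}$-adic degree puts it in the upper-triangular shape displayed for $\rho_j$. Faithfulness and generic transitivity are automatic because $\dim U = 3 = \ell-1$ and $U$ generates $R_j$.

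Second, I would record the general fact that the $\Ga^n$-fixed locus of such an action on $\mathbf{P}(R)$ is $\mathbf{P}(\operatorname{Soc}(R))$, where $\operatorname{Soc}(R)=\operatorname{Ann}_R(\mathfrak{m}_R)$. Indeed, a line $\mathbf{C}w$ is fixed if and only if $\exp(u)w\in\mathbf{C}w$ for all $u\in U$, equivalently $Uw\subseteq\mathbf{C}w$; then $\{r\in R:rw\in\mathbf{C}w\}$ is a unital $\mathbf{C}$-subalgebra containing $U$, hence equals $R$, so $Rw=\mathbf{C}w$. For $w\neq0$ this forces $\operatorname{Ann}_R(w)$ to be a maximal ideal, hence $\operatorname{Ann}_R(w)=\mathfrak{m}_R$, i.e. $w\in\operatorname{Soc}(R)$; the converse is clear since then $Uw\subseteq\mathfrak{m}_Rw=0$.

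Third, I would compute the socles of the four algebras: $\operatorname{Soc}(\mathbf{C}[x]/(x^4))=\mathbf{C}x^3$ and $\operatorname{Soc}(\mathbf{C}[x,y]/(x^2,y^2))=\mathbf{C}xy$ are one-dimensional, giving a point for $\rho_1$ and $\rho_3$; $\operatorname{Soc}(\mathbf{C}[x,y]/(x^2,xy,y^3))=\langle x,y^2\rangle$ is two-dimensional, giving a line for $\rho_2$; and $\operatorname{Soc}(\mathbf{C}[x,y,z]/(x,y,z)^2)=\mathfrak{m}_R$ is three-dimensional, giving a plane for $\rho_4$. In particular the four structures are pairwise non-isomorphic, since the underlying algebras already are (separated, say, by their Hilbert--Samuel functions together with the socle dimension just computed). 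The only mildly delicate point is the bookkeeping in the first step — arranging the monomial basis so that the computed matrices match the displayed $\rho_j$ entry by entry — but this is a routine verification, not a genuine obstacle.
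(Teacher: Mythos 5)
Your proposal follows essentially the same route as the paper, which states the corollary as an immediate consequence of the Hassett--Tschinkel correspondence applied to the four local algebras of the preceding proposition; your explicit computation of multiplication by $\exp(a_1X_1+a_2X_2+a_3X_3)$ in a monomial basis, and the identification of the fixed locus with $\mathbf{P}(\operatorname{Soc}(R))$ via the socle, are exactly the verifications the paper leaves implicit, and both are correct (your socle dimensions $1,2,1,3$ match the claimed point/line/point/plane). One caveat on the ``entry by entry'' matching you defer as routine: for $R_1\cong\mathbf{C}[x]/(x^4)$ the exponential yields $(1,4)$-entry $a_3+a_1a_2+\tfrac{1}{6}a_1^3$, not the printed $a_3+a_1a_2$ (and indeed the printed $\rho_1$ fails to be a homomorphism, the defect being $\tfrac12 a_1b_1(a_1+b_1)$, and the cubic term cannot be removed by a linear change of the parameters $a_i$ or of the basis of $\mathfrak m_{R_1}$); this is a typo in the displayed matrix rather than a flaw in your argument, but your computation will not literally reproduce $\rho_1$ as stated.
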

\begin{propo}\label{prop:Gastructureonproj}
If the vector group $\Ga^n$ acts transitively (generically) on the $\mathbf{P}^l$ with $n \geqslant l+1$, then there is a subgroup $G$ of $\Ga^n$ isomorphic to $\Ga^{n-l}$ such that the action of $\Ga^n$ factorizes via $\Ga^n/G\simeq \Ga^l$ on $\mathbf{P}^l$. In fact we have $G=\operatorname{Stab}_x \Ga^n$ where $x$ is a general point in $\mathbf{P}^l$. 
\end{propo}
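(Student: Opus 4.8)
The plan is to produce $G$ as the stabilizer of a general point and then to verify the three things that matter: that $G\cong\Ga^{n-l}$, that $G$ is normal and acts trivially, and that the quotient still acts generically transitively. So I would fix a point $x$ lying in the dense open orbit $O$ of the $\Ga^n$-action (such an orbit exists by hypothesis, and a general point of $\mathbf{P}^l$ lies in it), and set $G:=\operatorname{Stab}_x\Ga^n$. The orbit morphism $\Ga^n\to O$, $g\mapsto g\cdot x$, is surjective and all its fibres are cosets of $G$; since $O$ is open in $\mathbf{P}^l$ we get $\dim G=n-l$.

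The next step is to identify $G$ with a linear subspace of the vector group $\Ga^n$. For any $g\in G$, the Zariski closure inside $\Ga^n\cong\mathbf{A}^n$ of the cyclic subgroup $\mathbf{Z}\cdot g$ is the whole line $\mathbf{C}\cdot g$; since $G$ is a closed subgroup it contains this line. Hence $G$ is a subset of $\mathbf{A}^n$ stable under both addition and scalar multiplication, i.e. a linear subspace, of dimension $n-l$ by the previous step. In particular $G\cong\Ga^{n-l}$, and $\Ga^n/G$ is again a vector group, of dimension $l$, so $\Ga^n/G\cong\Ga^l$.

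It then remains to show that the action descends to $\Ga^n/G$. Because $\Ga^n$ is commutative, $G$ is normal in $\Ga^n$ and, for every $y=h\cdot x\in O$, one has $\operatorname{Stab}_y=hGh^{-1}=G$; thus $G$ fixes every point of the dense open set $O$. As the action is given by a morphism $\Ga^n\to\operatorname{Aut}(\mathbf{P}^l)$, continuity forces $G$ to fix all of $\mathbf{P}^l$, so $G$ lies in the kernel of this morphism and the action factors through $\Ga^n/G\cong\Ga^l$. The induced $\Ga^l$-action has $O$ as the orbit of $x$, hence is generically transitive (and $O\cong\Ga^l$), while $G=\operatorname{Stab}_x\Ga^n$ by construction.

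The only step that is not pure orbit–stabilizer formalism is the identification of $G$ with a linear subspace, which uses that we are in characteristic zero (torsion-freeness of $\Ga$, equivalently smoothness and connectedness of algebraic subgroups of $\Ga^n$). I expect this, together with making the density argument for $G\subseteq\ker(\Ga^n\to\operatorname{Aut}(\mathbf{P}^l))$ precise, to be the only points deserving more than a line of justification; everything else is bookkeeping with the orbit map and the observation that in an abelian group all stabilizers along a single orbit coincide.
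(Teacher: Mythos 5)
Your proof is correct, but it follows a genuinely different route from the paper's. The paper argues through the Hassett--Tschinkel correspondence: the action lifts to a representation $\Ga^n\to\operatorname{GL}_{l+1}(\mathbf{C})$ whose image generates an Artin local algebra $R$ of length $l+1$; picking a maximal linearly independent family $\{1,\overline{X_1},\dots,\overline{X_m}\}$ and expressing the remaining $\overline{X_k}$ as linear combinations, the exponential formula shows the action factors through a quotient $\Ga^m$ with kernel a linear subspace of dimension $n-m$, and generic transitivity together with $\dim_{\mathbf{C}}R=l+1$ forces $m=l$. You instead run a purely group-theoretic argument: $G=\operatorname{Stab}_x\Ga^n$ for $x$ in the open orbit has dimension $n-l$ by the fiber-dimension theorem, is a linear subspace of $\Ga^n$ (closed under addition as a subgroup and under scalars because the closure of $\mathbf{Z}g$ is the line $\mathbf{C}g$ in characteristic zero), coincides with every stabilizer along the orbit by commutativity, and hence fixes the dense orbit pointwise and therefore all of $\mathbf{P}^l$ since fixed loci are closed. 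Your argument is more elementary and more general --- it applies verbatim to any generically transitive $\Ga^n$-action on an irreducible variety, not just on $\mathbf{P}^l$ --- whereas the paper's computation stays inside the local-algebra dictionary it has just set up and explicitly identifies the kernel in the coordinates $(a_1,\dots,a_n)$, which is the form in which the statement gets used later (e.g.\ in Lemma \ref{lemma:product actions}). Both proofs are complete; the two points you flag as needing care (linearity of $G$ and the passage from fixing $O$ to fixing $\mathbf{P}^l$) are exactly the right ones and are handled correctly.
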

\begin{proof}
In view of the Hassett-Tschinkel correspondence, the action of $\Ga^n$ on $\mathbf{P}^l$ induces a representation $\Ga^n\to \operatorname{GL}_{l+1}(\mathbf{C})$ which corresponds to an Artin local algebra $R=\mathbf{C}[X_1,\cdots,X_n]/I$ of length $l+1$, where $I$ is an ideal such that $\sqrt{I}$ is the maximal ideal supported at the origin. Write $\overline{X_i}$ the class of $X_i$ in $R$. The action of an element $(a_1,\cdots,a_n)\in \Ga^n$ on the homogeneous coordinates corresponds to the linear transformation sending a basis $\{S_1,\cdots,S_{l+1}\}$ of $R$ into the product of each element of this base with the element $\operatorname{exp}(a_1\overline{X_1}+\cdots+a_n\overline{X_n}) \in R$.

Up to permutation, let $\{1,\overline{X_1},\cdots,\overline{X_m}\}$ be a maximal linearly independent collection in $R$. We claim that $m=l$. Indeed, write
\begin{equation}\label{eq:relation1}
\overline{X_k}=\sum_{i=1}^m b_{k,i} \overline{X_i},\quad m+1\leqslant k\leqslant n, b_{k,i}\in\mathbf{C}.
\end{equation}
(the coefficient of $1$ must be $0$ otherwise $\overline{X_k}$ would be invertible, which is impossible since some power of $X_k$ is contained in $I$.)
Then we have
\begin{align*}
\operatorname{exp}\left(\sum_{i=1}^n a_i \overline{X_i}\right)&=\operatorname{exp}\left(\sum_{i=1}^m a_i \overline{X_i}+\sum_{i=m+1}^n a_i\left(\sum_{j=1}^m b_{i,j}\overline{X_j}\right)\right)\\
&=\operatorname{exp}\left(\sum_{i=1}^m \left(a_i+\sum_{k=m+1}^n a_k b_{k,i}\right) \overline{X_i}\right)
\end{align*}
We see that the action of $\Ga^n$ factorizes into an action of $\Ga^{m}$ on $\mathbf{P}^l$ with kernel defined by the equations
$$a_i+\sum_{k=m+1}^n a_k b_{k,i}=0,\quad \forall i\in\{1,\cdots,m\}.$$
This is a vector group of dimension $n-m$ serving as the stabilizer.
So this action cannot be generically transitive unless $m\geqslant l$. This proves the claim and finishes the proof.
\end{proof}

As a consequence we get the following result concerning additive actions on $\mathbf{P}^1 \times \mathbf{P}^2$ and $\mathbf{P}^1 \times \mathbf{P}^1 \times \mathbf{P}^1$.

\begin{lemma}\label{lemma:product actions}
There exist, up to isomorphism, a unique $\Ga^3$-structure on $\mathbf{P}^1 \times \mathbf{P}^1 \times \mathbf{P}^1$ and two different $\Ga^3$-structures on $\mathbf{P}^1 \times \mathbf{P}^2$.
\end{lemma}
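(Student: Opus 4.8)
The plan is to reduce the problem of classifying $\Ga^3$-structures on a product $\mathbf{P}^1\times\mathbf{P}^2$ (resp. $\mathbf{P}^1\times\mathbf{P}^1\times\mathbf{P}^1$) to the already-solved classification on each factor, using Corollary \ref{co:productauto} together with Proposition \ref{prop:Gastructureonproj}. First I would invoke Corollary \ref{co:productauto}: for $X=X_1\times X_2$ proper, $\operatorname{Aut}^0(X)\cong\operatorname{Aut}^0(X_1)\times\operatorname{Aut}^0(X_2)$, so a $\Ga^3$-action on $X$ is the same datum as a homomorphism $\Ga^3\to\operatorname{Aut}^0(X_1)\times\operatorname{Aut}^0(X_2)$, i.e. a pair of actions of $\Ga^3$ on $X_1$ and on $X_2$ compatible with the diagonal embedding. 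Effectivity and the existence of a dense orbit on the product then translate into: the two projected actions on $X_1$ and $X_2$ are each generically transitive (the orbit of a general point must project to a dense orbit in each factor, by dimension count and irreducibility), and their kernels $G_1,G_2\subseteq\Ga^3$ intersect trivially, $G_1\cap G_2=\{0\}$.

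Next I would apply Proposition \ref{prop:Gastructureonproj} to each factor: since $\Ga^3$ acts generically transitively on $\mathbf{P}^2$ (resp. on $\mathbf{P}^1$) with $3\geq 2+1$ (resp. $3\geq 1+1$), the action factors through $\Ga^3/G_i$ with $G_i\cong\Ga^{3-\dim X_i}$ a subgroup, and the induced action of $\Ga^3/G_i\cong\Ga^{\dim X_i}$ is one of the generically transitive structures classified in Proposition \ref{prop:P2} (for $\mathbf{P}^2$) or the unique one on $\mathbf{P}^1$. For $\mathbf{P}^1\times\mathbf{P}^1\times\mathbf{P}^1$ the three kernels $G_1,G_2,G_3$ are $2$-dimensional subspaces of $\Ga^3$; the constraint is that $G_i\cap G_j$ are the $1$-dimensional coordinate-type stabilizers and that together the three actions are effective with a dense orbit. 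One checks the data is rigid: after a linear change of coordinates on $\Ga^3$ one may take $G_i=\{a_i=0\}$, and then the induced $\Ga^1$-action on each $\mathbf{P}^1$ (there being only one such, the standard translation) is pinned down, so there is a unique structure up to isomorphism. For $\mathbf{P}^1\times\mathbf{P}^2$ the kernels are $G_1\cong\Ga^2$ (from the $\mathbf{P}^1$-factor) and $G_2\cong\Ga^1$ (from the $\mathbf{P}^2$-factor), with $G_1\cap G_2=\{0\}$ forcing $G_1\oplus G_2=\Ga^3$; the $\mathbf{P}^1$-action is then rigid, while the $\mathbf{P}^2$-action can be either of the two structures of Proposition \ref{prop:P2} (fixed locus a line, or fixed locus a point), and these two choices are genuinely non-isomorphic as $\Ga^3$-varieties since the fixed-point schemes of the total action differ. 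I would also verify that changing the splitting $\Ga^3=G_1\oplus G_2$ by an automorphism of $\Ga^3$ does not merge the two cases — it cannot, as it preserves the isomorphism type of the $\mathbf{P}^2$-factor action — giving exactly two structures.

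The main obstacle I anticipate is bookkeeping the isomorphism relation correctly: two $\Ga^3$-structures on $X_1\times X_2$ are isomorphic when related by an element of $\operatorname{Aut}^0(X_1)\times\operatorname{Aut}^0(X_2)$ \emph{and} an automorphism of the abstract group $\Ga^3=\operatorname{GL}_3(\mathbf{C})\ltimes\dots$ — concretely $\operatorname{GL}_3$ acting on the three "slopes" — so I must check that this combined action on the space of admissible pairs $(G_1,G_2)$ (resp. triples) has exactly one orbit in the triple-$\mathbf{P}^1$ case and exactly two orbits in the $\mathbf{P}^1\times\mathbf{P}^2$ case, the latter distinguished by the $\mathbf{P}^2$-structure invariant. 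This is the delicate point because a naive count of splittings of $\Ga^3$ overcounts; everything else is a direct application of the cited results. Once the orbit count is done, the converse (each listed structure really is effective with a dense orbit) is immediate from the product of the dense orbits in the factors.
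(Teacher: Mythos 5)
Your proposal is correct and follows essentially the same route as the paper, which deduces the lemma directly from Corollary \ref{co:productauto}, Proposition \ref{prop:Gastructureonproj}, the uniqueness of the additive structure on $\mathbf{P}^1$, and the two structures on $\mathbf{P}^2$ from Proposition \ref{prop:P2}. The paper leaves the orbit-counting on splittings of $\Ga^3$ implicit, so your more careful bookkeeping of the isomorphism relation is a faithful expansion of the same argument rather than a different one.
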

\begin{proof}
The result follows from Corollary \ref{co:productauto} and Proposition \ref{prop:Gastructureonproj}, together with the fact that there is a unique additive structure on $\mathbf{P}^1$ (see \cite[Proposition 3.1]{HT99} for instance) and that they are two additive structures on $\mathbf{P}^2$, by Proposition \ref{prop:P2}.
\end{proof}

\section{Additive actions on quadric hypersurfaces}\label{section:quadrics}
\subsection{Non-degenerate quadrics}\label{section:quadrics smooth}

It is known since the work of Hassett and Tschinkel \cite[\S 3]{HT99} that the number of isomorphism classes of $\Ga^n$-structures on $\mathbf{P}^n$ is finite if and only if $n\leq 5$. The problem concerning the classification of all $\Ga^n$-structures on the non-degenerate quadric $\mathcal{Q}_n \subseteq \mathbf{P}^{n+1}$ was considered by Sharoyko in \cite{Sha09}. A full answer to the classification problem is given by the following result, which summarizes \cite[Theorem 4]{Sha09} and the discussion in \cite[pages 1726-1727]{Sha09}.

\begin{thm}[Sharoyko]\label{theo:smooth quadric} 
For each integer $n\geq 1$ there exists a unique $\Ga^n$-structure on the non-degenerate quadric $\mathcal{Q}_n \subseteq \mathbf{P}^{n+1}$ up to isomorphism. Moreover, there are homogeneous coordinates $[x_0:\ldots:x_{n+1}]$ of $\mathbf{P}^{n+1}$ such that
$$\mathcal{Q}_n=\left\{[x_0,\ldots,x_{n+1}]\in \mathbf{P}^{n+1}\;\left|\;\sum_{i=1}^n x_i^2 = 2x_0x_{n+1} \right.\right\} $$
and the $\Ga^n$-structure is given by the representation
$$\rho(a_1,\ldots,a_n)= \begin{pmatrix}
1 & 0 & \cdots & 0 & 0 \\
a_1 & 1 & \ddots & \ddots & 0 \\
\vdots & \vdots & \ddots & \ddots & \vdots \\
a_n & 0 & \cdots & 1 & 0 \\
\frac{1}{2}\sum_{i=1}^n a_i^2 & a_1 & \cdots & a_n & 1
\end{pmatrix}$$
In particular, the fixed locus of $\rho$ is given by the single point $[0:\ldots:0:1]\in \mathbf{P}^{n+1}$.
\end{thm}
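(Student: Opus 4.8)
The plan is to apply the Hassett--Tschinkel machinery of \S\ref{section:HT} directly. An effective $\Ga^n$-action on $\mathcal{Q}_n \subseteq \mathbf{P}^{n+1}$ with a dense orbit is, by the discussion preceding this theorem, the same thing as a faithful representation $\rho : \Ga^n \to \operatorname{GL}_{n+2}(\mathbf{C})$ together with a cyclic vector $v$, such that the associated projective action preserves a smooth quadric hypersurface. Via the correspondence this amounts to a pair $(R,U)$ where $R$ is a local commutative algebra of dimension $n+2$ and $U \subseteq \mathfrak{m}_R$ is an $n$-dimensional generating subspace, subject to the extra constraint that the cyclic representation stabilizes a non-degenerate quadratic form $q$ on $R \cong \mathbf{C}^{n+2}$. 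So the first step is to translate the condition ``$\rho$ preserves a non-degenerate quadric'' into algebra: $q$ must be $\exp(U)$-invariant, equivalently $\langle X_i \cdot a, b\rangle + \langle a, X_i \cdot b\rangle = 0$ for all $a,b \in R$ and all $X_i$ spanning $U$, i.e. the nilpotent operators of multiplication by elements of $U$ are skew-symmetric with respect to $q$.

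Next I would extract structural consequences of this skew-symmetry. Since multiplication by $\mathfrak{m}_R$ is nilpotent and skew with respect to $q$, one gets that the one-dimensional socle $\operatorname{soc}(R) = \operatorname{Ann}(\mathfrak{m}_R)$ must be $q$-orthogonal to $\mathfrak{m}_R$ (as $\mathfrak{m}_R = \mathfrak{m}_R \cdot R$ roughly speaking, using that $R$ is generated by $U \subseteq \mathfrak{m}_R$), hence pairs non-trivially with the unit $1$; similarly $1^\perp \cap \mathfrak{m}_R \supseteq \mathfrak{m}_R^2$ forces control on the Hilbert--Samuel function. The key numerical point I expect to fall out is that $\dim \mathfrak{m}_R / \mathfrak{m}_R^2 = 2$ is impossible together with smoothness except in a very constrained way, and more precisely that $R$ must be a \emph{Gorenstein} algebra (the pairing $q$ makes $R$ self-dual as an $R$-module), with $\mathfrak{m}_R^3 = 0$ and $\dim \mathfrak{m}_R^2 = 1$. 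Combined with $\dim R = n+2$, this pins down the Hilbert function to $(1, n, 1)$, and then the generating subspace $U$ of dimension $n$ must be a complement to $\mathfrak{m}_R^2$ inside $\mathfrak{m}_R$; the multiplication $\operatorname{Sym}^2 U \to \mathfrak{m}_R^2 \cong \mathbf{C}$ is exactly a quadratic form on $U$, which must be non-degenerate (otherwise $R$ fails to be generated by $U$, or the radical of this form lands in the annihilator and the orbit fails to be dense). Up to change of basis this gives $R \cong \mathbf{C}[X_1,\dots,X_n]/(X_iX_j - \delta_{ij}\,e)_{i\le j}$ with $e$ the socle generator, which is visibly unique, and one reads off the representation $\rho$ and the quadric $\sum x_i^2 = 2x_0x_{n+1}$ exactly as stated.

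The main obstacle, and the place where I expect the real work to be, is the step establishing that the Hilbert function of $R$ must be $(1,n,1)$ — that is, ruling out longer gradings $\mathfrak{m}_R^2 \ne 0 \ne \mathfrak{m}_R^3$. Purely abstract Gorenstein algebras of socle degree $\geq 3$ exist in abundance, so the argument must genuinely use that $U$ has codimension one in $R$ modulo $\operatorname{span}(1)$, i.e. that the whole of $\mathfrak{m}_R$ except a one-dimensional piece lies in $U$, together with the dense-orbit (generic transitivity) requirement $n = \dim R - 2$. The clean way is probably: skew-symmetry of $\mathrm{mult}$ by $U$ forces, for any $u \in U$, that $q(u^k \cdot 1, 1) = \pm q(1, u^k \cdot 1)$ alternates sign with $k$ and simultaneously $q(u^j, u^k) = \pm q(1, u^{j+k})$, so the Gram matrix of $q$ in a basis adapted to the $\mathfrak{m}_R$-adic filtration is anti-triangular; non-degeneracy then forces $\dim \mathfrak{m}_R^i = \dim \mathfrak{m}_R^{s-i}$ for the socle degree $s$, and a dimension count against $n = \dim R - 2$ and the generation hypothesis collapses everything to $s = 2$. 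I would present this carefully since it is the crux; the remaining identification of the normal form is then bookkeeping, cross-checked against Sharoyko \cite[Theorem 4]{Sha09} whose statement we are merely reproducing here.
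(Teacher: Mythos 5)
First, a point of reference: the paper does not prove this statement at all --- it is quoted from Sharoyko \cite[Theorem 4]{Sha09} (cf.\ also \cite{AP14}), so there is no internal argument to measure yours against. That said, your strategy is the right one and is essentially the one used in the literature: pass through the Hassett--Tschinkel correspondence to a pair $(R,U)$ with $\dim R=n+2$ and $\dim U=n$, observe that since $\Ga^n$ has no nontrivial characters the linear representation must preserve the quadratic form $q$ itself (not merely up to scalar), and deduce that multiplication by elements of $U$ is skew-adjoint for the polarization $B$ of $q$. Your endgame is also correct: once the Hilbert function is $(1,n,1)$ and $\mathfrak{m}_R=U\oplus\mathfrak{m}_R^2$, the multiplication $\operatorname{Sym}^2U\to\mathfrak{m}_R^2\cong\mathbf{C}$ is a quadratic form whose nondegeneracy follows from that of $q$, and the normal form $R\cong\mathbf{C}[X_1,\dots,X_n]/(X_iX_j-\delta_{ij}e)$ is unique up to the orthogonal group, which yields both existence and uniqueness. (One preliminary step you should not skip: an abstract $\Ga^n$-action on $\mathcal{Q}_n$ must be shown to extend to a linear action on $\mathbf{P}^{n+1}$; this holds because $\operatorname{Aut}(\mathcal{Q}_n)=\mathrm{PO}_{n+2}(\mathbf{C})$ for $n\geq 2$, but it has to be said.)

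The genuine gap is at the step you yourself identify as the crux, and the mechanism you propose does not close it. Gorenstein symmetry of the Hilbert function plus the constraints $\dim U=n=\dim R-2$ and ``$U$ generates $R$'' do \emph{not} collapse the socle degree to $s=2$: the algebra $R=\mathbf{C}[x]/\langle x^4\rangle$ with $U=\langle x,x^2\rangle$ has $\dim R=4=n+2$ for $n=2$, symmetric Hilbert function $(1,1,1,1)$, and a $2$-dimensional generating subspace, so your ``dimension count'' fails already for $\mathcal{Q}_2$. Your dimension count would work if one knew $U\cap\mathfrak{m}_R^2=0$ (equivalently $\dim\mathfrak{m}_R/\mathfrak{m}_R^2=n$), but that is precisely what must be proved. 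What actually excludes such $R$ is the sign bookkeeping you only gesture at: from $B(ua,b)=-B(a,ub)$ for $u\in U$ and the symmetry of $B$ one gets, e.g.\ in the example above, $B(x^3,x^j)=0$ for all $j$, contradicting nondegeneracy; and in general one must confront the fact that an element of $U\cap\mathfrak{m}_R^2$ acts skew-adjointly as an element of $U$ while also being a sum of higher-order products acting with varying signs --- this tension is where $U\cap\mathfrak{m}_R^2=0$ really comes from. Until that parity argument is carried out in general, the classification is not established, and your sketch as written would accept spurious algebras.
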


\begin{remark}\label{remark:unicity quadric}
 The above uniqueness result has been generalized by Fu and Hwang in \cite[Theorem 1.2, Corollary 1.3]{FH14} to a larger class of Fano manifolds of Picard number one. See also \cite{Dev15} and \cite{FH17}.
\end{remark}

In the case of threefolds the result of Sharoyko leads to the following (cf. Proof of \cite[Theorem 6.1]{HT99}).

\begin{cor}\label{cor:smooth quartic 3fold}
The boundary divisor for the unique $\Ga^3$-structure on the smooth quadric $\mathcal{Q}_3 \subseteq \mathbf{P}^4$ is given by a singular quadric hyperplane section $\mathcal{Q}_0^2 \subseteq \mathcal{Q}_3$ and the only invariant curves contained in $\mathcal{Q}_0^2$ are the lines of the distinguished ruling passing through the (isolated) singular point of $\mathcal{Q}_0^2$.
\end{cor}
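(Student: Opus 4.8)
The plan is to use the explicit description of the $\Ga^3$-structure on $\mathcal{Q}_3$ given in Theorem \ref{theo:smooth quadric} with $n=3$, and read off the boundary and invariant curves directly from the representation $\rho$. First I would write $\mathcal{Q}_3 = \{\sum_{i=1}^3 x_i^2 = 2x_0x_5\}$ — more precisely with coordinates $[x_0:x_1:x_2:x_3:x_4]$ and equation $x_1^2+x_2^2+x_3^2 = 2x_0x_4$. By Proposition \ref{prop:boundary divisors} the boundary is a single irreducible divisor $D$ (since $B_2(\mathcal{Q}_3)=1$), and the open orbit is contained in $\{x_0 \neq 0\}$: indeed $\rho(a)$ fixes the vector $e_0 = (1,0,0,0,0)^t$ and the orbit of $[e_0]$ is exactly $\{x_0 \neq 0\} \cap \mathcal{Q}_3$. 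Hence $D = \mathcal{Q}_3 \cap \{x_0 = 0\}$, which is the hyperplane section cut out by $x_0$; its equation inside this hyperplane is $x_1^2 + x_2^2 + x_3^2 = 0$, a quadric cone of corank one, i.e. $\mathcal{Q}_0^2$, singular exactly at the point $p = [0:0:0:0:1]$, which is also the unique fixed point of $\rho$ by Theorem \ref{theo:smooth quadric}.

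Next I would identify the invariant curves in $D$. The surface $\mathcal{Q}_0^2$ is a cone over a conic with vertex $p$, so its lines are precisely the rulings through $p$; these form a one-parameter family. I would check that each such ruling is $\Ga^3$-invariant and that no other curve in $D$ is. The key point: any $\Ga^3$-invariant curve $C \subseteq D$ must (by properness of the orbit closure and the fact that $\Ga^3$ is unipotent, so orbits are affine spaces) contain a fixed point, hence contain $p$; and it must be contained in the orbit closure of any of its non-fixed points. Computing $\rho(a)\cdot [0:v_1:v_2:v_3:w]$ for a point of $D$ (so $v_1^2+v_2^2+v_3^2 = 0$) and examining which orbits are one-dimensional and closed in $D$, one finds the orbits of points on the conic at infinity (the base of the cone, $\{x_0=x_4=0\}$) are zero-dimensional or sweep out the whole base, while the orbit closure of a generic point of a ruling line is that line itself — or rather, the line minus $p$ is a single orbit. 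So the invariant curves in $D$ are exactly the rulings through $p$.

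The main obstacle I anticipate is the bookkeeping in the second part: showing that the only one-dimensional invariant subsets of $D$ are the rulings through $p$, and in particular that the base conic $\{x_0 = x_4 = 0\}$ is \emph{not} invariant (it should not be, since it avoids the fixed point $p$, but one must verify the $\Ga^3$-action genuinely moves it). This is a direct but slightly delicate orbit computation with the explicit matrix $\rho(a_1,a_2,a_3)$: one applies it to a point $[0:v_1:v_2:v_3:w]$ with $v_1^2+v_2^2+v_3^2=0$, gets $[0 : v_1 : v_2 : v_3 : a_1v_1 + a_2v_2 + a_3v_3 + w]$ (the leading coordinate staying $0$), and sees that the orbit fixes the direction $[v_1:v_2:v_3]$ and only translates the last coordinate — so each orbit in $D$ is either the point $p$ (when $v=0$), a point on the base conic with its orbit being... one must be careful, and the cleanest route is to note the orbit of $[0:v_1:v_2:v_3:w]$ is $\{[0:v_1:v_2:v_3:\ast]\}$, an affine line whose closure is the full ruling line through $[0:v_1:v_2:v_3:0]$ and $p$, confirming the claim. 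Everything else is routine, so I would present the argument as: (1) locate the open orbit and hence $D$; (2) identify $D = \mathcal{Q}_0^2$ with vertex the fixed point $p$; (3) do the orbit computation in $D$ to list the invariant curves.
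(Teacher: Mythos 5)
Your proposal is correct and follows essentially the same route as the paper, which states this corollary as a direct consequence of Sharoyko's explicit representation in Theorem \ref{theo:smooth quadric}: the open orbit of $[1:0:0:0:0]$ is $\{x_0\neq 0\}\cap\mathcal{Q}_3$, the boundary $\{x_0=0\}\cap\mathcal{Q}_3$ is the cone $\mathcal{Q}_0^2$ with vertex the fixed point $p=[0:\cdots:0:1]$, and the orbit of $[0:v_1:v_2:v_3:w]$ is the ruling line through $p$ minus $p$, so the invariant curves are exactly those rulings. Your passing remark that orbits of points on the base conic are ``zero-dimensional or sweep out the whole base'' is inaccurate, but you correctly supersede it with the final computation showing each such orbit is the affine line $\{[0:v_1:v_2:v_3:\ast]\}$, which in particular shows the base conic itself is not invariant.
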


\subsection{Degenerate quadrics}\label{section:quadrics singular} Arzhantsev and Popovskiy proved in \cite{AP14} that additive actions on projective hypersurfaces correspond to invariant multilinear symmetric forms on local algebras. As an application, they classify additive actions on quadrics of corank one. We will only address the case of three-dimensional quadrics of corank one, and we refer the interested reader to \cite[\S 6]{AP14} for the general case.

\begin{thm}[Arzhantsev \& Popovskiy]\label{theo:degenerate quadric} Let $\mathcal{Q}_0^3$ be the quadric cone in $\mathbf{P}^4$ over a smooth quadric surface $\mathcal{Q}_2\subseteq \mathbf{P}^3$, given by
$$\mathcal{Q}_0^3= \{ [x_0:x_1:x_2:x_3:x_4]\in \mathbf{P}^4\;|\; 2x_0x_4 = x_1^2 + x_2^2 \} \subseteq \mathbf{P}^4. $$
Then there are three distinct $\Ga^3$-structures on $\mathcal{Q}_0^3$. They are given by the following representations of $\Ga^3$:

$$
\begin{array}{l}\rho_1(a_1,a_2,a_3)=
\begin{pmatrix}
1 & 0 & 0 & 0 & 0 \\
a_1 & 1 & 0 & 0 & 0 \\
a_2 & 0 & 1 & 0 & 0 \\
a_3 & 0 & 0 & 1 & 0 \\
\frac{1}{2}(a_1^2+a_2^2) & a_1 & a_2 & 0 & 1
\end{pmatrix}, \\
\rho_2(a_1,a_2,a_3)=
\begin{pmatrix}
1 & 0 & 0 & 0 & 0 \\
a_1 & 1 & 0 & 0 & 0 \\
a_2 & 0 & 1 & 0 & 0 \\
\frac{1}{2}a_2^2+a_3 & 0 & a_2 & 1 & 0 \\
\frac{1}{2}(a_1^2+a_2^2) & a_1 & a_2 & 0 & 1
\end{pmatrix},\\
\rho_3(a_1,a_2,a_3)=
\begin{pmatrix}
1 & 0 & 0 & 0 & 0 \\
a_1 & 1 & 0 & 0 & 0 \\
a_2 & 0 & 1 & 0 & 0 \\
\frac{1}{2}(a_1a_2+ia_2^2)+a_3 & \frac{1}{2}a_2 & \frac{1}{2}a_1+ia_2 & 1 & 0 \\
\frac{1}{2}(a_1^2+a_2^2) & a_1 & a_2 & 0 & 1
\end{pmatrix}, \end{array}
$$
where $i$ denotes a square root of $-1$. In particular, in all the cases the induced action on the hyperplane section $\{x_3=0\}\cap \mathcal{Q}_0^3 \cong \mathcal{Q}_2$ coincides with the action described in Theorem \ref{theo:smooth quadric} for $n=2$ and hence the only invariant curves contained in this section are lines.
\end{thm}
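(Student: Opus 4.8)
The plan is to follow the strategy of Arzhantsev--Popovskiy \cite{AP14}: translate the existence of a $\Ga^3$-structure on the hypersurface $\mathcal{Q}_0^3 \subseteq \mathbf{P}^4$ into the existence of a local algebra together with an invariant symmetric multilinear form, classify the relevant algebras, and then read off the matrix representations. First I would record that $\mathcal{Q}_0^3$ is the projective cone over the smooth quadric surface $\mathcal{Q}_2 \subseteq \mathbf{P}^3$ with vertex the point $[0:0:0:1:0]$ (after the normalization of coordinates in the statement), so that the linear subspace $\{x_3 = 0\}$ cuts out $\mathcal{Q}_2$ through the vertex-free part. Any $\Ga^3$-action must fix the vertex and the singular locus, hence preserve the hyperplane $\{x_3=0\}$, and Blanchard's lemma (Theorem \ref{theo:Blanchard}) forces the induced action on the quotient $\mathcal{Q}_2$; by Theorem \ref{theo:smooth quadric} for $n=2$ this induced action is the unique one there, which already pins down the lower-right $4\times 4$ block of each representation to be the $n=2$ quadric representation. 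So the classification reduces to understanding the extra freedom in the row/column attached to the coordinate $x_3$.

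Next I would set up the algebra side. An additive action on $\mathcal{Q}_0^3 \subseteq \mathbf{P}^4$ corresponds, via the Hassett--Tschinkel dictionary (\S \ref{section:HT}) applied to $\mathbf{P}^4$, to a local commutative algebra $R$ of dimension $5$ generated by a $3$-dimensional subspace $U \subseteq \mathfrak{m}_R$, together with the condition that the quadratic form defining $\mathcal{Q}_0^3$ is semi-invariant; by \cite[\S 2-3]{AP14} this last condition is equivalent to giving an invariant symmetric bilinear form on $R$ of corank one whose kernel is compatible with the cone structure. Because the induced action on $\mathcal{Q}_2$ is the standard one, $R$ must contain as a quotient the $4$-dimensional algebra $\mathbf{C}[x,y]/(x^2 - y^2,\, \ldots)$ realizing the $\Ga^2$-structure on $\mathcal{Q}_2$ — concretely $\mathbf{C}[t]/(t^3)$-type behavior in the relevant grading — and $R$ is obtained from it by a one-dimensional central extension corresponding to the new variable. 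I would then enumerate the possible such extensions: this is a short finite computation in dimension $5$, and the three cases $\rho_1,\rho_2,\rho_3$ arise according to how the new generator multiplies against the existing ones (the trivial extension, and two non-trivial ones distinguished by an invariant of the extension class, which is where the square root of $-1$ enters). Finally I would verify directly that the three listed matrices $\rho_1(a_1,a_2,a_3),\rho_2(a_1,a_2,a_3),\rho_3(a_1,a_2,a_3)$ are group homomorphisms $\Ga^3 \to \operatorname{GL}_5(\mathbf{C})$ preserving the quadratic form $2x_0x_4 - x_1^2 - x_2^2$ up to scalar, act with a dense orbit, and are pairwise non-conjugate by $\operatorname{PGL}_5$-elements stabilizing the quadric (e.g. by comparing the fixed loci and the structure of the stabilizer subgroups).

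The last claim of the statement — that in all three cases the action induced on $\{x_3 = 0\}\cap \mathcal{Q}_0^3 \cong \mathcal{Q}_2$ is the standard $n=2$ action, so the only invariant curves in that section are lines — then follows immediately from the block structure identified in the first step together with Theorem \ref{theo:smooth quadric}: the restriction of each $\rho_j$ to the invariant coordinates $[x_0:x_1:x_2:x_4]$ is exactly the $n=2$ quadric representation, whose only invariant curves (by Corollary \ref{cor:smooth quartic 3fold} applied in dimension two, i.e. the lines through the unique fixed point, but here $\mathcal{Q}_2$ is smooth so its invariant curves are the lines of a ruling) are lines.

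\textbf{Main obstacle.} The genuinely delicate point is the classification of the one-dimensional extensions and the verification that the three produced actions are pairwise non-isomorphic rather than conjugate — in particular isolating the invariant (the ``$i$'' appearing in $\rho_3$) that separates $\rho_2$ from $\rho_3$, and checking that no further extensions yield new structures. This is exactly the content that Arzhantsev and Popovskiy carry out with their invariant-form formalism, and reproducing it cleanly requires the full correspondence between semi-invariant forms on hypersurfaces and multilinear forms on local algebras from \cite{AP14}; everything else is bookkeeping with nilpotent upper-triangular matrices.
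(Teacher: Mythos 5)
Your proposal is correct and follows essentially the same route as the paper, whose proof simply invokes the Arzhantsev--Popovskiy classification (\cite[Proposition~7, Example~4]{AP14}) of the five-dimensional local algebras $R_\Lambda=\mathbf{C}[S_1,S_2,S_3]/I_\Lambda$ carrying the invariant form --- parametrized by exactly three matrices $\Lambda$ --- and then exponentiates to obtain $\rho_1,\rho_2,\rho_3$; your one-dimensional-extension analysis is precisely what that citation packages. One small correction: the hyperplane section $\{x_3=0\}\cap\mathcal{Q}_0^3$ is \emph{not} invariant under these actions (for instance $\rho_1$ sends $x_3$ to $a_3x_0+x_3$), so the ``induced action'' on $\mathcal{Q}_2$ must be read off from the projection away from the fixed vertex $[0:0:0:1:0]$ (equivalently, from the $4\times 4$ block in the coordinates $x_0,x_1,x_2,x_4$), not from restriction to an invariant hyperplane.
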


\begin{proof}
 It follows by \cite[Proposition 7, Example 4]{AP14} that all possible $\Ga^3$-structures on $\mathcal{Q}_0^3$ are obtained, via exponentiation, from the local algebras $R_\Lambda = \mathbf{C}[S_1,S_2,S_3]/I_\Lambda$ where
 $$I_\Lambda = \langle S_1S_2 - \lambda_{12}S_3,S_1^2-S_2^2-(\lambda_{11}-\lambda_{22})S_3,S_1S_3,S_2S_3,S_3^2 \rangle $$
 and
 $$\Lambda=\begin{pmatrix}\lambda_{11}&\lambda_{12} \\ \lambda_{21}&\lambda_{22}\end{pmatrix} \in \left\{ \begin{pmatrix}0&0 \\ 0&0\end{pmatrix}, \begin{pmatrix}0&0 \\ 0&1\end{pmatrix}, \begin{pmatrix}\frac{i}{2}&\frac{1}{2} \\ \frac{1}{2}&-\frac{i}{2}\end{pmatrix} \right\}. $$
The result follows. \end{proof}

\section{Additive actions on toric varieties}\label{section:toric}

We refer the reader to \cite{CLS} for the general theory of toric varieties.

\begin{defn}[Normalized action] Let $X$ be a projective toric variety of dimension $n\geq 1$ with acting torus $\mathbf{T}\cong \mathbf{G}_m^n$. Suppose that $X$ admits an additive structure. We say that this structure is {\it normalized by the torus} (or just {\it normalized}) if the group $\mathbf{T}$ normalizes $\Ga^n$ in $\operatorname{Aut}(X)$.
\end{defn}

Inspired by the work of Demazure \cite{Dem70}, Arzhantsev and Romaskevich proved the following result in \cite[Theorem 3.6, Theorem 4.1]{AR17}.

\begin{thm}[Arzhantsev \& Romaskevich]\label{theo: toric existence}
	Let $X$ be a complete toric variety with acting torus $\mathbf{T}$. The following conditions are equivalent:
	\begin{enumerate}
		\item There exists an additive action on $X$ normalized by the torus $\mathbf{T}$.
		\item There exists an additive action on $X$.
	\end{enumerate}
	Moreover, any two normalized additive actions on $X$ are isomorphic.
\end{thm}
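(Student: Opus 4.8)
\textbf{Plan of proof of Theorem \ref{theo: toric existence}.} The statement to establish is the equivalence of (1) and (2), together with the uniqueness up to isomorphism of normalized additive actions. The implication (1)$\Rightarrow$(2) is trivial, so the entire content lies in (2)$\Rightarrow$(1) and in the uniqueness clause. The natural strategy, following Demazure's description of $\operatorname{Aut}^0(X)$ for a complete toric variety $X$ with acting torus $\mathbf{T}$, is to exploit the root system of $X$. Recall that $\operatorname{Aut}^0(X)$ is a connected linear algebraic group containing $\mathbf{T}$ as a maximal torus, and that its unipotent radical (together with the semisimple part) is generated by the one-parameter \emph{root subgroups} $U_\alpha \cong \Ga$ indexed by the Demazure roots $\alpha$ in the character lattice $M$ of $\mathbf{T}$; each $U_\alpha$ is normalized by $\mathbf{T}$, which acts on it through the character $\alpha$. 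The plan is: first, take an arbitrary additive action $\Ga^n \hookrightarrow \operatorname{Aut}^0(X)$ with dense orbit; second, use the conjugacy of maximal tori in the linear algebraic group $\operatorname{Aut}^0(X)$ to replace $\mathbf{T}$ by a conjugate so that one may assume the Lie algebra $\mathfrak{g}$ of the image $\Ga^n$ is in "general position" with respect to the root-space decomposition $\operatorname{Lie}(\operatorname{Aut}^0(X)) = \operatorname{Lie}(\mathbf{T}) \oplus \bigoplus_\alpha \operatorname{Lie}(U_\alpha)$; third, show that one can actually choose $\Ga^n$ so that its Lie algebra is spanned by root vectors, i.e. so that $\Ga^n$ is directly generated by finitely many root subgroups $U_{\alpha_1},\dots,U_{\alpha_n}$ — such a subgroup is automatically normalized by $\mathbf{T}$.

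For the existence (2)$\Rightarrow$(1), the key step is the combinatorial/Lie-theoretic reduction just described: given that $X$ carries \emph{some} additive action, one wants to produce a collection of $n$ Demazure roots $\alpha_1,\dots,\alpha_n$ whose root subgroups commute and generate a group acting with a dense orbit on $X$. Concretely, one shows that the existence of a dense $\Ga^n$-orbit forces the divisor class group (equivalently, by Proposition \ref{prop:boundary divisors}, the boundary structure on $X$) to satisfy a condition — essentially, that for each torus-invariant prime divisor $D_i$ there is a root $\alpha_i$ "pointing in the direction of" the corresponding ray, and that these roots can be chosen pairwise compatible (their sum being never again a root, so that the $U_{\alpha_i}$ commute). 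This reduces to a statement purely about the fan of $X$; one then checks directly that $\exp$ of the span of the corresponding root vectors is isomorphic to $\Ga^n$ and that the induced map $\Ga^n \times \mathbf{T} \to \operatorname{Aut}(X)$ — hence the open $\Ga^n$-orbit times the torus — covers a dense open subset, giving a dense $\Ga^n$-orbit. By construction $\mathbf{T}$ normalizes this $\Ga^n$.

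For the uniqueness clause, the plan is to show that a normalized additive structure $\Ga^n \subseteq \operatorname{Aut}^0(X)$, being $\mathbf{T}$-stable with $\mathbf{T}$ acting by conjugation, has Lie algebra $\mathfrak{g}$ which is a $\mathbf{T}$-submodule of $\operatorname{Lie}(\operatorname{Aut}^0(X))$; since the non-zero $\mathbf{T}$-weights occurring are precisely the Demazure roots and each root space is one-dimensional, $\mathfrak{g}$ must be a direct sum of root spaces (the zero weight space $\operatorname{Lie}(\mathbf{T})$ cannot contribute, since $\mathbf{T}\cap\Ga^n$ is finite hence trivial, so $\mathfrak{g}$ meets $\operatorname{Lie}(\mathbf{T})$ in $0$). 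Thus a normalized structure \emph{is} exactly one of the root-subgroup structures above, and the remaining task is to show any two such are conjugate in $\operatorname{Aut}(X)$ — this follows because the set of roots that can appear is constrained by the requirement of having a dense orbit (each $D_i$ must be "moved"), and the Weyl-group/normalizer action together with the torus lets one match up any two admissible root collections. The main obstacle I anticipate is precisely this last matching: verifying that the combinatorial data (the admissible systems of $n$ pairwise-commuting Demazure roots that yield a dense orbit) is a single orbit under $\operatorname{Aut}(X)$, rather than merely producing \emph{some} normalized action. In the paper this is presumably handled by a careful analysis of Demazure roots ray by ray, and it is the part where one must be most careful; everything else (conjugacy of maximal tori, the weight decomposition, $\exp$ giving a copy of $\Ga^n$) is formal.
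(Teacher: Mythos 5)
First, a remark on the comparison itself: the paper does not prove this statement at all --- it is quoted from Arzhantsev--Romaskevich \cite{AR17} (Theorems 3.6 and 4.1), so there is no in-paper argument to measure your outline against. Judged on its own terms, your outline identifies the correct framework: Demazure's description of $\operatorname{Aut}^0(X)$ with $\mathbf{T}$ as a maximal torus, the root subgroups $U_\alpha\cong\Ga$ on which $\mathbf{T}$ acts by the Demazure roots, and the weight-space argument showing that the Lie algebra of a \emph{normalized} $\Ga^n$ must be a direct sum of root spaces (meeting $\operatorname{Lie}(\mathbf{T})$ trivially). The uniqueness discussion is essentially the right shape, and in \cite{AR17} it is completed by showing that a ``complete collection'' of Demazure roots is essentially unique, so the matching problem you worry about is real but tractable.

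The genuine gap is in the central step of (2)$\Rightarrow$(1). You propose to use conjugacy of maximal tori to put the given $\Ga^n$ in ``general position'' and then to ``show that one can actually choose $\Ga^n$ so that its Lie algebra is spanned by root vectors.'' No conjugation can accomplish this: a normalized additive action is in general not conjugate, nor even isomorphic, to the given one. The paper's own Proposition \ref{prop:P2} already exhibits this on $\mathbf{P}^2$: there are two non-isomorphic $\Ga^2$-structures, only one of which is normalized by the torus, so the other cannot be moved onto a normalized one by any automorphism. The mechanism that actually works, and is the heart of the proof in \cite{AR17}, is a \emph{degeneration}: one conjugates the unipotent subgroup $H\cong\Ga^n\subseteq\operatorname{Aut}^0(X)$ by a generic one-parameter subgroup $\lambda(t)$ of $\mathbf{T}$ and passes to the limit $\lim_{t\to 0}\lambda(t)H\lambda(t)^{-1}$ (equivalently, one replaces $\operatorname{Lie}(H)$ by its space of initial terms for the weight grading). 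The limit is still a commutative unipotent subgroup of dimension $n$, it is normalized by $\mathbf{T}$ by construction, and it still has a dense orbit by semicontinuity of orbit dimension. Without this limiting argument the reduction to root subgroups --- and hence the whole implication --- does not go through as you have set it up.
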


In the case of projective toric varieties Arzhantsev and Romaskevich give a combinatorial criterion for the existence of an additive structure in terms of the associated (very ample) polytope.

\begin{defn}[Inscribed polytope]\label{defi:inscribed polytope}
	A lattice polytope $P$ is {\it inscribed in a rectagle} if there is a vertex $v_0\in P$ such that
	\begin{enumerate}
		\item the primitive vectors on the edges of $P$ containing $v_0$ form a basis $e_1,\ldots,e_n$ of the lattice $M$;
		\item for every inequality $\langle p, x \rangle \leq a$ on $P$ that corresponds to a facet of $P$ not passing through $v_0$ we have $\langle p,e_i \rangle \geq 0$ for all $i=1,\ldots,n$.
	\end{enumerate}
\end{defn}

The criterion \cite[Theorem 5.2]{AR17}  can be stated as follows.

\begin{thm}[Arzhantsev \& Romaskevich]\label{theo:additive toric varieties} Let $P$ be a very ample polytope and $X_P$ be the corresponding projective toric variety. Then $X_P$ admits an additive action if and only if the polytope $P$ is inscribed in a rectangle. In particular, a toric Fano manifold $X$ admits an additive action if and only if the polytope $P_{-K_X}$ is inscribed in a rectangle.
\end{thm}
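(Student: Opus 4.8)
The plan is to reduce the existence question to a linear-algebra statement about the dual (polytope) data, using the Cox-ring / total-coordinate-ring description of $X_P$ together with Theorem \ref{theo: toric existence}, which already tells us that an additive structure exists if and only if a \emph{normalized} one exists. So throughout I may assume the action is normalized, which means the one-parameter subgroups $\lambda_i\colon\Ga\to\operatorname{Aut}(X_P)$ generating the $\Ga^n$-action are each normalized by the torus $\mathbf T$; equivalently, each $\lambda_i$ is a \emph{root} of the fan, in the sense of Demazure. Concretely, Demazure's description of $\operatorname{Aut}^0(X_P)$ identifies the unipotent part with a product of root groups $U_e$ indexed by the \emph{Demazure roots} $e\in M$: those lattice points $e$ for which there is a unique ray $\rho_e$ of the fan $\Sigma_P$ with $\langle e,u_{\rho_e}\rangle=-1$ and $\langle e,u_{\rho}\rangle\geq 0$ for every other ray $\rho$. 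The torus normalizes each $U_e$ and acts on it through the character $e$, so a normalized $\Ga^n$-action is the same thing as a choice of $n$ Demazure roots $e_1,\dots,e_n$ whose associated root groups commute, generate an $n$-dimensional unipotent subgroup, and act with a dense orbit.

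\medskip

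First I would translate "acts with a dense orbit" into the condition that the $n$ rays $\rho_{e_1},\dots,\rho_{e_n}$ are distinct and the remaining structure is rigid enough; the key point (this is exactly the computation behind \cite[Theorem 3.6]{AR17}) is that a generically transitive normalized action forces the roots $e_1,\dots,e_n$ to be, up to sign and reordering, the \emph{negatives of a basis} dual to a distinguished cone of the fan. Indeed, if $\sigma_0\in\Sigma_P(n)$ is the maximal cone whose rays are $u_1,\dots,u_n$ and $e_1,\dots,e_n$ is the dual basis, then $-e_i$ is a candidate Demazure root precisely when $\langle -e_i,u_\rho\rangle\geq 0$ for every ray $\rho$ not equal to $u_i$, i.e. when $\langle e_i,u_\rho\rangle\leq 0$ for all such $\rho$; and the product $U_{-e_1}\cdots U_{-e_n}$ then has a dense orbit (its orbit through the distinguished fixed point of $\sigma_0$ is the affine chart $U_{\sigma_0}\cong\mathbf A^n$). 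So the second step is: \emph{$X_P$ admits a normalized additive action iff there is a maximal cone $\sigma_0$, with primitive ray generators a lattice basis $u_1,\dots,u_n$ and dual basis $e_1,\dots,e_n$, such that $\langle e_i,u_\rho\rangle\leq 0$ for every ray $\rho\neq u_i$ and every $i$.}

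\medskip

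The final step is purely dictionary-translation between the fan picture and the polytope picture, exchanging $\sigma_0\leftrightarrow$ vertex $v_0$ of $P=P_{-K_X}$. Under the standard correspondence, the maximal cone $\sigma_0$ of the normal fan corresponds to a vertex $v_0$ of $P$, the ray generators $u_1,\dots,u_n$ of $\sigma_0$ are the inner facet normals of the $n$ facets of $P$ through $v_0$, and the dual basis $e_1,\dots,e_n$ (rescaled appropriately) is exactly the set of primitive edge vectors of $P$ emanating from $v_0$. A ray $\rho\neq u_i$ corresponds to a facet $F_\rho$ of $P$ not containing the $i$-th edge at $v_0$; if $\rho$ is moreover not among $u_1,\dots,u_n$ then $F_\rho$ misses $v_0$ altogether, and the inequality $\langle e_i,u_\rho\rangle\leq 0$ becomes, after sign bookkeeping on inner versus outer normals, precisely the condition $\langle p,e_i\rangle\geq 0$ of Definition \ref{defi:inscribed polytope}(2) for the facet inequality $\langle p,x\rangle\leq a$ of $F_\rho$; the rays $u_j$ ($j\neq i$) among the facets through $v_0$ automatically satisfy $\langle e_i,u_j\rangle=0$ by duality, which is the boundary case of (2). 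Condition (1) of Definition \ref{defi:inscribed polytope} — that the $e_i$ form a lattice basis of $M$ — matches the requirement that $u_1,\dots,u_n$ be a lattice basis of $N$, which in turn is equivalent to $X_P$ being smooth at the torus-fixed point of $\sigma_0$; for a toric Fano manifold this holds at \emph{every} vertex, so the criterion is about finding \emph{one} vertex of $P_{-K_X}$ with property (2), i.e. about $P_{-K_X}$ being inscribed in a rectangle. Assembling these three steps gives the theorem.

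\medskip

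The main obstacle I anticipate is the second step — extracting from "normalized $\Ga^n$-action with dense orbit" the rigid conclusion that the roots are negatives of a cone-dual basis. One must rule out more exotic configurations of commuting root groups: for instance several root groups sharing the same ray, or roots that are not minimal. The commutation relations among Demazure root groups (Demazure's formula: $[U_e,U_{e'}]\subseteq U_{e+e'}$ when $e+e'$ is again a root, and the subtlety when $\rho_e=\rho_{e'}$) have to be used to show that a maximal commutative unipotent subgroup acting generically transitively is, up to the torus action, unique and of the stated "diagonal" shape. This is the technical heart of \cite[Theorems 3.6 and 4.1]{AR17}, and I would either reproduce that argument or cite it; the remaining two steps are bookkeeping with dual cones and facet inequalities.
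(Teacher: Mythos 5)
The paper does not prove this theorem: it is quoted directly as \cite[Theorem 5.2]{AR17}, so there is no internal argument to compare against. Your sketch is a faithful reconstruction of the proof in that reference --- reduction to normalized actions via Theorem \ref{theo: toric existence}, identification of a normalized $\Ga^n$-action with a complete collection of Demazure roots dual to a smooth maximal cone, and the inner/outer facet-normal dictionary turning $\langle e_i,u_\rho\rangle\leq 0$ into condition (2) of Definition \ref{defi:inscribed polytope} --- and you correctly isolate the one genuinely technical step (that a generically transitive commutative product of root subgroups must be, up to reordering, the negatives of the basis dual to a single smooth maximal cone) as the content of \cite[Theorems 3.6, 4.1]{AR17}, which it is legitimate to cite.
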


The above criterion allows us to classify all possible smooth toric Fano threefolds which are equivariant compactification of $\Ga^3$ (see Proposition \ref{prop:toric case} below). 

\begin{exmp}[del Pezzo surfaces]\label{example:del Pezzo} Using the above criterion we can deduce that the toric del Pezzo surfaces $\mathbf{P}^1\times \mathbf{P}^1$, $\mathbf{P}^2$, $\mathbf{S}_8$ and $\mathbf{S}_7$ are equivariant compactification of $\Ga^2$, but that $\mathbf{S}_6$ is not an equivariant compactification of $\Ga^2$. In particular, since $\mathbf{S}_d$ is obtained as a blow-up of $\mathbf{S}_{d+1}$ for $1\leq d \leq 7$, if follows from Blanchard's lemma that the del Pezzo surface $\mathbf{S}_d$ is not a compactification of $\Ga^2$ for $1\leq d \leq 6$. This can be also deduced from the explicit classification of $\Ga^2$-structures in $\mathbf{P}^2$ given in Proposition \ref{prop:P2} (cf. \cite{DL10,DL15}).
\end{exmp} 
 
\section{Additive actions on flag varieties}\label{section:flags}

The equivariant compactifications of $\Ga^n$ by flag varieties $G/P$ are studied by Arzhantsev in \cite{Arzhantsev1}. Arzhantsev's criterion reads as follows. 

\begin{thm}[Arzhantsev] Let $G$ be a connected semisimple group of adjoint type and $P$ a parabolic subgroup of $G$. Then the flag variety $G/P$ is an equivariant compactification of $\Ga^n$ if and only if for every pair $(G^{(i)},P^{(i)})$, where $G^{(i)}$ is a simple component of $G$ and $P^{(i)}=G^{(i)}\cap P$, on of the following conditions holds:
\begin{enumerate}
 \item The unipotent radical $P_u^{(i)}$ is commutative.
 \item The pair $(G^{(i)},P^{(i)})$ is exceptional\footnote{We say that $(G,P)$ is {\it exceptional} if $\operatorname{Aut}^0(G/P)\neq G$.}.
\end{enumerate}
\end{thm}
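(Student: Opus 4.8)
The plan is to reduce to the case of a simple group, to exhibit the additive structure directly when the parabolic is cominuscule, and to rule out the remaining cases by inspecting $\operatorname{Aut}^0(G/P)$. Since $G$ is of adjoint type we have $G/P=\prod_i G^{(i)}/P^{(i)}$, and by Corollary \ref{co:productauto} $\operatorname{Aut}^0(G/P)=\prod_i\operatorname{Aut}^0(G^{(i)}/P^{(i)})$. A commutative unipotent subgroup acting with a dense orbit on the product projects to commutative unipotent subgroups acting with dense orbits on the factors (passing to a complementary subgroup of the generic stabilizer, exactly as in Proposition \ref{prop:Gastructureonproj}), and conversely the product of additive structures on the factors is an additive structure on $G/P$. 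Hence $G/P$ is an equivariant compactification of $\Ga^n$ if and only if each $G^{(i)}/P^{(i)}$ is one, so we may assume $G$ simple and must show that $G/P$ is additive precisely when $P_u$ is commutative or $\operatorname{Aut}^0(G/P)\supsetneq G$.

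For the implication $(\Leftarrow)$: if $P_u$ is commutative, so is the unipotent radical $P_u^-$ of the opposite parabolic, and $P_u^-\cong\Ga^n$ with $n=\dim G/P$ acts on $G/P$ with the big Bruhat cell $P_u^-\cdot eP\cong\mathbf{A}^n$ as a dense open orbit, so $G/P$ is additive. If instead $(G,P)$ is exceptional, then by Onishchik's classification of flag varieties with extra automorphisms $G/P$ is an odd-dimensional projective space, an orthogonal spinor variety, or the five-dimensional quadric $\mathcal{Q}_5$; each of these is a cominuscule homogeneous space for the strictly larger group $\operatorname{Aut}^0(G/P)$ and hence additive by the case just treated (alternatively, projective spaces are trivially additive and $\mathcal{Q}_5$ is additive by Theorem \ref{theo:smooth quadric}).

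For the implication $(\Rightarrow)$, assume $G/P$ is additive with $\operatorname{Aut}^0(G/P)=G$; we must show $P_u$ is commutative. The action provides a commutative unipotent subgroup $U\le G$ with a dense open orbit $\mathcal{O}\cong\mathbf{A}^n$, and choosing the base point inside $\mathcal{O}$ makes $\mathfrak{u}=\operatorname{Lie}(U)$ an abelian subalgebra of $\mathfrak{g}$ with $\mathfrak{u}\oplus\mathfrak{p}=\mathfrak{g}$, i.e. an abelian complement of $\mathfrak{p}$. One first reduces to the case that $P$ is maximal: if $P$ is contained in a proper parabolic $Q$, then $\mathfrak{u}\cap\mathfrak{q}$, pushed forward to the Levi quotient of $\mathfrak{q}$, is an abelian complement of the parabolic that $P$ induces in the Levi, while $G/Q$ remains additive by Blanchard's Lemma (Theorem \ref{theo:Blanchard}); an induction on the rank then forces $P$ to be maximal and, in particular, excludes genuine partial flag varieties, for which the relevant $\{-1,0,1\}$-grading condition fails. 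For $P$ maximal it remains to prove that $\mathfrak{g}$ carries an abelian subalgebra transverse to $\mathfrak{p}$ only when $P$ is cominuscule; this is checked by going through the finitely many maximal parabolics of simple Lie algebras and verifying, via the root-space decomposition, that for a non-cominuscule and non-exceptional pair no abelian subalgebra of dimension $\dim\mathfrak{g}/\mathfrak{p}$ can be transverse to $\mathfrak{p}$.

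The delicate step is this last one. A dimension count does not suffice, since $\dim\mathfrak{g}/\mathfrak{p}$ can be strictly smaller than the maximal dimension of an abelian subalgebra of $\mathfrak{g}$ even in the cominuscule case; one must genuinely exploit the transversality $\mathfrak{u}\oplus\mathfrak{p}=\mathfrak{g}$ together with the explicit list of cominuscule and exceptional flag varieties --- equivalently, with a case analysis of $\operatorname{Aut}^0(G/P)=G$ via the geometry of the minimal rational curves on $G/P$. Once this structural statement is available, reassembling the simple factors yields the criterion.
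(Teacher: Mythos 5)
First, a point of comparison: the paper does not prove this statement at all --- it is quoted from Arzhantsev's article \cite{Arzhantsev1} and used as a black box --- so there is no internal proof to measure your attempt against. Judged on its own, your outline has the correct global architecture: reduction to simple factors via Corollary~\ref{co:productauto} and Blanchard's lemma, the big-cell argument $P_u^-\cdot eP\cong\mathbf{A}^n$ when $P_u$ is commutative, Onishchik's list for the exceptional pairs, and, for the converse, the observation that when $\operatorname{Aut}^0(G/P)=G$ the group $\Ga^n$ sits inside $G$ as a commutative unipotent subgroup $U$ with $\mathfrak{u}\oplus\mathfrak{p}=\mathfrak{g}$. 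Those parts are sound.

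There are, however, two genuine gaps in the forward implication. First, the reduction to maximal parabolics does not work as described. From $P\subseteq Q$ you extract that $G/Q$ is additive and that $\mathfrak{u}\cap\mathfrak{q}$ induces an abelian complement to the parabolic $P\cap L_Q$ of the Levi; but the inductive conclusion is then only that $Q$ and $(L_Q,P\cap L_Q)$ are each cominuscule or exceptional, and this does not force $P=Q$, nor $P_u$ abelian. The incidence variety $\mathbf{Fl}(1,n-1)$ of your own Proposition~\ref{prop:flag varieties} is exactly the kind of case the induction fails to exclude: it fibres over $\mathbf{P}^n$ with fibre $\mathbf{P}^{n-1}$, so both the ambient parabolic $Q$ and the induced parabolic of the Levi are cominuscule, yet $P_u$ is non-commutative, the pair is not exceptional, and the variety is not additive. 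Second, the heart of the matter --- that for $\operatorname{Aut}^0(G/P)=G$ an abelian subalgebra $\mathfrak{u}$ with $\mathfrak{u}\oplus\mathfrak{p}=\mathfrak{g}$ can exist only when $\mathfrak{p}_u$ is abelian --- is not proved but only announced as a case-by-case check over maximal parabolics; you yourself flag it as ``the delicate step'' and then supply no argument for it. The standard route is to conjugate $U$ into the unipotent radical of a Borel opposite to one contained in $P$, use the $\mathbf{Z}$-grading $\mathfrak{g}=\bigoplus_k\mathfrak{g}_k$ attached to $P$, and derive from the commutativity of $\mathfrak{u}$ together with the transversality to $\mathfrak{p}$ that $\mathfrak{g}_k=0$ for $|k|\geq 2$; some such argument must be written out (and it handles non-maximal $P$ uniformly, making the problematic reduction of the previous point unnecessary). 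As it stands, the proposal establishes the easy direction and the product reduction but leaves the essential implication unproved.
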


\begin{remark}\label{remark: unicity flag}
 It is worth remarking that $\Ga^n$-structures (if there exist) on flag varieties $G/P$ are all isomorphic as long as $G/P\not\cong \mathbf{P}^n$ (see \cite{FH14} or \cite{Dev15} for instance).
\end{remark}

We denote by $\mathbf{Fl}(1,n-1)$ the $(2n-1)$-dimensional flag variety of lines in hyperplanes in $\mathbf{P}^n$. As a subvariety of $\mathbf{P}^n\times\mathbf{P}^n$ in homogeneous coordinates $[x_i],[y_j]$, it is defined by the equation
$$\sum_{i=0}^n x_i y_i=0.$$
\begin{propo}\label{prop:flag varieties}
	For $n\geqslant 2$, $\mathbf{Fl}(1,n-1)$ is not an equivariant compactification of $\Ga^{2n-1}$.
\end{propo}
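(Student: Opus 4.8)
The variety $\mathbf{Fl}(1,n-1)$ is the flag variety $G/P$ for $G=\mathrm{PGL}_{n+1}$ and $P$ the parabolic stabilizing a flag (line $\subset$ hyperplane). Since $G$ is simple, Arzhantsev's criterion says $\mathbf{Fl}(1,n-1)$ is an equivariant compactification of $\Ga^{2n-1}$ only if the unipotent radical $P_u$ is commutative or $(G,P)$ is exceptional. The plan is to rule out both conditions. First I would observe that $\dim \mathbf{Fl}(1,n-1) = 2n-1$, so $\operatorname{Aut}^0(\mathbf{Fl}(1,n-1))$ having a dense $\Ga^{2n-1}$-orbit is exactly the generically transitive situation; the point is purely to show neither alternative (1) nor (2) of Arzhantsev's theorem holds for $n\geq 2$.

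For alternative (1), I would compute $P_u$ directly in matrix terms: writing $P\subset \mathrm{PGL}_{n+1}$ as the stabilizer of the standard line $\langle e_0\rangle$ and the standard hyperplane $\langle e_0,\dots,e_{n-1}\rangle$, the unipotent radical $P_u$ consists of unipotent matrices whose nontrivial off-diagonal entries occupy the first row (positions $(0,j)$, $1\le j\le n-1$) and the last column (positions $(i,n)$, $1\le i\le n-1$) together with the corner $(0,n)$. A short bracket computation shows that the commutator of a "first row" generator $E_{0,j}$ with a "last column" generator $E_{i,n}$ (for $1\le i,j\le n-1$) is $\delta_{ij}E_{0,n}\neq 0$ when $i=j$, so $P_u$ is non-commutative as soon as there exists an index in $\{1,\dots,n-1\}$, i.e. as soon as $n\geq 2$. (For $n=1$ the variety degenerates and is not relevant.) Thus condition (1) fails for all $n\geq 2$.

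For alternative (2), I would invoke the classical fact — due to Demazure, and recorded in the standard tables of automorphism groups of flag varieties — that $\operatorname{Aut}^0(G/P) = G$ (i.e. $(G,P)$ is \emph{not} exceptional) whenever $G$ is simple except in the short list of exceptional cases: the Grassmannian-type projective spaces where $G/P \cong \mathbf{P}^n$ and $G=\mathrm{PGL}_{n+1}$ acts but the automorphism group is still $\mathrm{PGL}_{n+1}$ (these are not exceptional either in the usual convention), the odd symplectic/orthogonal degenerations, and a couple of sporadic cases ($\mathbf{P}(T_{\mathbf{P}^n})$ type, quadric contact cases, the $G_2$ five-dimensional quadric). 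The variety $\mathbf{Fl}(1,n-1) \subset \mathbf{P}^n\times \mathbf{P}^n$ for $n\geq 2$ is none of these: for $n\geq 3$ it has Picard number $2$ and its automorphism group is exactly $\mathrm{PGL}_{n+1}\rtimes \mathbf{Z}/2$ (the extra factor swapping $\mathbf{P}^n$ with its dual, which does \emph{not} enlarge $\operatorname{Aut}^0$), so $\operatorname{Aut}^0 = \mathrm{PGL}_{n+1} = G$ and $(G,P)$ is not exceptional; for $n=2$ one has $\mathbf{Fl}(1,1)\subset \mathbf{P}^2\times\mathbf{P}^2$, the full flag variety of $\mathrm{PGL}_3$, which is again non-exceptional. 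Hence condition (2) fails as well, and by Arzhantsev's criterion $\mathbf{Fl}(1,n-1)$ is not an equivariant compactification of $\Ga^{2n-1}$.

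\textbf{Main obstacle.} The bracket computation for $P_u$ is routine; the delicate point is citing correctly that $(G,P)$ is not exceptional, i.e. pinning down $\operatorname{Aut}^0(\mathbf{Fl}(1,n-1))=\mathrm{PGL}_{n+1}$. I would handle this either by quoting Demazure's classification of exceptional pairs directly, or — to keep the argument self-contained — by a deformation/Lie-algebra argument: $\mathrm{H}^0(\mathbf{Fl}(1,n-1), T)$ is the Lie algebra of $\operatorname{Aut}^0$, and one computes via the normal bundle sequence for $\mathbf{Fl}(1,n-1)\hookrightarrow\mathbf{P}^n\times\mathbf{P}^n$ (or via Bott vanishing on $\mathbf{P}^n\times\mathbf{P}^n$ and the defining section of $\mathcal{O}(1,1)$) that this space has dimension $(n+1)^2-1 = \dim\mathfrak{pgl}_{n+1}$, forcing $\operatorname{Aut}^0 = \mathrm{PGL}_{n+1}$ and hence non-exceptionality.
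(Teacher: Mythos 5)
Your proposal is correct and follows essentially the same route as the paper: apply Arzhantsev's criterion, check by a direct matrix/bracket computation that the unipotent radical $P_u$ (a Heisenberg-type extension) is non-commutative for $n\geq 2$, and rule out exceptionality by identifying $\operatorname{Aut}^0(\mathbf{Fl}(1,n-1))=\operatorname{PGL}_{n+1}(\mathbf{C})$. The only divergence is in that last sub-step: the paper derives $\operatorname{Aut}^0=\operatorname{PGL}_{n+1}(\mathbf{C})$ by applying Blanchard's lemma to the two fibrations $\mathbf{Fl}(1,n-1)\to\mathbf{P}^n$ (every automorphism descends and must preserve the flag condition), whereas you quote Demazure's classification of exceptional pairs or sketch a computation of $\mathrm{H}^0(T)$ via the normal bundle sequence — all three are standard and adequate here.
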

\begin{proof}
	We use Arzhantsev's criterion. The variety $F=\mathbf{Fl}(1,n-1)$ is isomorphic to $G/P$ where $G=\operatorname{GL}_{n+1}(\mathbf{C})$ and $P$ is the parabolic subgroup
	\begin{equation*}
	\begin{pmatrix}
	\ast & \ast & \cdots & \ast\\
	0 & \vdots & \ddots & \vdots\\
	\vdots & \ast & \cdots & \ast\\
	0  & \cdots & 0 & \ast
	\end{pmatrix}
	\end{equation*}
	The nilpotent radical of $P$ is
	$$P_u=\begin{pmatrix}
	1 & \ast & \cdots &\cdots & \ast & \ast\\
	& 1 &0 &\cdots & 0  & \ast\\
	& &\ddots & \ddots & \vdots & \vdots\\
	& & &\ddots & 0 & \vdots\\
	& & & & 1 & \ast\\
	& & & & & 1
	\end{pmatrix}
	$$
	This is an extension by the group $\Ga^{n-1}$ of the group $\Ga^{n}$, and it is non-commutative.
	We next show that $\operatorname{Aut}(F)=\operatorname{PGL}_{n+1}(\mathbf{C})$, which is not among the exceptional cases (See \cite[p. 784]{Arzhantsev1}). By Blanchard's lemma for proper morphisms with connected fibers, groups actions can descend equivariantly. The variety $F$ possesses two surjective fibrations to $\mathbf{P}^n$ coming from the two factors. Take an element $\varrho\in\operatorname{Aut}(F)$. It descends to two actions on $\mathbf{P}^n$. But $\varrho$ has to preserve flags. Equivalently it preserves any line and its orthogonal at the same time. So $\varrho$ comes from $\operatorname{Aut}(\mathbf{P}^n)=\operatorname{PGL}_{n+1}(\mathbf{C})$. Conversely any element in $\operatorname{Aut}(\mathbf{P}^n)$ defines an automorphism of $F$. The claim hence follows.\footnote{Alternatively, as Ivan Arzhantsev kindly communicated to us, the non-additivity of $\mathbf{Fl}(1,n-1)$ can be deduced from the classification of parabolic subgroups with abelian unipotent radical in \cite{RRS92}: if a parabolic subgroup $P$ of a simple group $G$ has abelian unipotent radical then $P$ is a maximal parabolic subgroup.}
\end{proof}

\section{Kishimoto's classification}\label{section:kishimoto}

The possible compactifications of contractible affine threefolds into smooth Fano threefolds with $B_2=2$ whose log canonical divisors are not nef were studied by Kishimoto in \cite{Kis05}. See also \cite{Mul90} and \cite{Nag17} for related results. Kishimoto's classification can be resumed as follows.

\begin{thm}[Kishimoto]\label{theo:kishimoto}
	Let $U$ be a contractible affine algebraic threefold. Assume that $U$ is embedded into a smooth Fano threefold $X$ with $B_2(X)=2$ and with boundary $X \setminus U = D_1 \cup D_2$. Suppose that the log canonical divisor $K_X+D_1+D_2$ is not nef. Then the complete list of triplets $(X,D_1\cup D_2,U)$, up to permutation of $D_1$ and $D_2$, is given as in \cite[Table 1]{Kis05}. In particular, $X$ belongs to one of the following 16 deformation equivalence classes:
	\begin{enumerate}
		\item $\II_{14}$, the blow-up of $V_5$ along an elliptic curve which is an intersection of two members of $\left|-\frac{1}{2}K_{V_5} \right|$.
		\item $\II_{20}$, the blow-up of $V_5$ along a twisted cubic.
		\item $\II_{22}$, the blow-up of $V_5$ along a conic.
		\item $\II_{23.(b)}$, the blow-up of $\mathcal{Q}_3 \subseteq \mathbf{P}^4$ along the intersection of $A\in |\mathcal{O}_{\mathcal{Q}_3}(1)|$ and $B\in |\mathcal{O}_{\mathcal{Q}_3}(2)|$ such that $A$ is not smooth.
		\item $\II_{24}$, a divisor on $\mathbf{P}^2 \times \mathbf{P}^2$ of bidegree $(1,2)$.
		\item $\II_{26}$, the blow-up of $V_5$ along a line.
		\item $\II_{27}$, the blow-up of $\mathbf{P}^3$ along a twisted cubic.
		\item $\II_{28}$, the blow-up of $\mathbf{P}^3$ along a plane cubic.
		\item $\II_{29}$, the blow-up of $\mathcal{Q}_3 \subseteq \mathbf{P}^4$ along a conic.
		\item $\II_{30}$, the blow-up of $\mathbf{P}^3$ along a conic or, equivalently, the blow-up of $\mathcal{Q}_3 \subseteq \mathbf{P}^4$ at a point $p$.
		\item $\II_{31}$, the blow-up of $\mathcal{Q}_3 \subseteq \mathbf{P}^4$ along a line.
		\item $\II_{32}$, a divisor on $\mathbf{P}^2 \times \mathbf{P}^2$ of bidegree $(1,1)$ or, equivalently, the projective bundle $\mathbf{P}(T_{\mathbf{P}^2})$.
		\item $\II_{33}$, the blow-up of $\mathbf{P}^3$ along a line or, equivalently, the projective bundle $\mathbf{P}(\mathcal{O}_{\mathbf{P}^1}^{\oplus 2}\oplus \mathcal{O}_{\mathbf{P}^1}(1))$.
		\item $\II_{34} \cong \mathbf{P}^1 \times \mathbf{P}^2$.
		\item $\II_{35}$, the blow-up of $\mathbf{P}^3$ at a point $p$ or, equivalently, the projective bundle $\mathbf{P}(\mathcal{O}_{\mathbf{P}^2}\oplus \mathcal{O}_{\mathbf{P}^2}(1))$.
		\item $\II_{36}\cong \mathbf{P}(\mathcal{O}_{\mathbf{P}^2}\oplus \mathcal{O}_{\mathbf{P}^2}(2))$.
	\end{enumerate}
	Conversely, there is a Fano threefold $X$ belonging to each of these classes such that $X$ is the compactification of a contractible affine algebraic threefold and such that the associated log canonical divisor is not nef.
\end{thm}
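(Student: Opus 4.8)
The plan is to prove the classification by combining the structure theory of the two extremal contractions of a smooth Fano threefold $X$ of Picard number two with the strong constraints imposed by the two hypotheses: that $U=X\setminus(D_1\cup D_2)$ is affine and contractible, and that $K_X+D$, with $D:=D_1+D_2$, is not nef. Since $B_2(X)=2$, the Mori cone $\overline{\NE}(X)$ has exactly two extremal rays $R_1,R_2$, with associated contractions $\varphi_i\colon X\to X_i$ $(i=1,2)$; the triple $(X,\varphi_1,\varphi_2)$ is one of the $36$ possibilities tabulated by Mori and Mukai \cite{MM81,MM03}, so the argument is organized as a pass through that list. The hypothesis that $K_X+D$ is not nef says that $(K_X+D)\cdot R_j<0$ for some $j$, say $j=1$; then $\varphi_1$ is simultaneously $K_X$-negative and $(K_X+D)$-negative, and it is this extra rigidity that one exploits.

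First one records the consequences of $U$ being affine and contractible. Affineness implies that no complete curve of $X$ is disjoint from $D_1\cup D_2$---in particular every fiber, and every curve contracted by $\varphi_1$ or by $\varphi_2$, must meet the boundary---and, by Goodman's affineness criterion, that $D_1+D_2$ is big with support carrying an ample divisor after a suitable blow-up. Contractibility gives $\Pic(U)=0$ (as $U$ is a contractible Stein manifold), and since the restriction map $\Pic(X)\to\Pic(U)$ is surjective with kernel generated by $[D_1]$ and $[D_2]$, these two classes form a $\mathbf{Q}$-basis of $\Pic(X)_{\mathbf{Q}}$; in particular $-K_X\sim_{\mathbf{Q}}\alpha_1D_1+\alpha_2D_2$ for some $\alpha_i\in\mathbf{Q}$.

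Next one analyzes the $(K_X+D)$-negative contraction $\varphi_1$ according to its type. If $\varphi_1$ is of fiber type then, as $B_2(X)=2$, the base $X_1$ is $\mathbf{P}^1$ or $\mathbf{P}^2$. In the first case the composite $U\to\mathbf{P}^1$ has affine image, so $\mathbf{P}^1\setminus\varphi_1(U)$ is a nonempty finite set and one boundary component, say $D_1$, is a union of fibers, while a general fiber $S$ of $\varphi_1$ is a weak del Pezzo surface with $S\setminus(D_2\cap S)$ affine and containing no complete curve; the surface classification (Remmert--van de Ven \cite{RV60}, Morrow \cite{Mor72,Mor73}) then pins down $S$ and the relative structure. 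In the second case an analogous fiberwise argument over $\mathbf{P}^2$, together with control of the discriminant curve of the conic bundle, does the job. If instead $\varphi_1$ is birational, it contracts an irreducible divisor $E$ to a point or a smooth curve in a $\mathbf{Q}$-factorial terminal Fano threefold $X_1$ with $B_2(X_1)=1$; since the contracted curves must meet the boundary one first shows that $E$ is itself a boundary component, and then the classification of Fano threefolds of Picard number one, together with the requirement that $X_1\setminus\varphi_1(U)$ be small, restricts $X_1$ (essentially to $\mathbf{P}^3$, $\mathcal{Q}_3$ and $V_5$) and the center of the contraction (to a point, line, conic, twisted cubic, plane cubic, etc.). Carrying this out for both $\varphi_1$ and $\varphi_2$ and matching the outcome against the Mori--Mukai tables leaves exactly the $16$ families in the statement; the sharpest eliminations---for instance, why among the members of $\II_{23}$ only those with singular hyperplane section $A$ survive---use the ``not nef'' hypothesis most directly, through the existence of a curve $C$ with $(-K_X)\cdot C>(D_1+D_2)\cdot C$.

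For the converse, in each of the $16$ families one exhibits an explicit model: typically $X$ is a blow-up of $\mathbf{P}^3$, $\mathcal{Q}_3$ or $V_5$, with $D_1$ the exceptional divisor and $D_2$ the strict transform of a suitably chosen hyperplane or quadric section, and one checks directly that $U$ is isomorphic to $\mathbf{A}^3$ (or at least to an explicit contractible affine threefold, for instance an $\mathbf{A}^1$-fibration over $\mathbf{A}^2$) and that $K_X+D$ fails to be nef by intersecting it with a contracted curve; this is the content of \cite[Table 1]{Kis05}. The main obstacle is the length and delicacy of the case analysis: there are $36$ families to go through, and in the birational cases one must keep simultaneous track of two distinct blow-up/blow-down structures on $X$ and do careful bookkeeping to separate the $16$ surviving families from the rest; a secondary difficulty appears in the converse direction, where verifying affineness of the complement occasionally demands a nonobvious choice of the two boundary divisors within a given family.
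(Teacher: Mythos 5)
The paper does not actually prove this statement: it is Kishimoto's classification, imported verbatim with a pointer to \cite[Table 1]{Kis05}, and the only in-paper commentary on its proof is the Remark immediately following it, which records that the argument depends essentially on computing topological Euler characteristics of the contractible threefold $U$. So there is no internal proof to compare your attempt against; what can be assessed is whether your outline is a viable reconstruction of Kishimoto's argument.

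As such a reconstruction it has the right skeleton (two extremal rays, a $(K_X+D)$-negative one, affineness forcing every contracted curve to meet $D_1\cup D_2$, a pass through the Mori--Mukai table), but two things keep it from being a proof. First, the decisive content is deferred: the sentences ``the surface classification then pins down $S$ and the relative structure'' and ``carrying this out for both $\varphi_1$ and $\varphi_2$ \ldots leaves exactly the $16$ families'' are precisely the several-dozen-page case analysis that constitutes \cite{Kis05}; nothing in the proposal actually eliminates, say, $\II_{25}$ or the smooth-$A$ members of $\II_{23}$. Second, and more substantively, you extract from contractibility only $\Pic(U)=0$, which is far too weak. Kishimoto's eliminations hinge on the full topological input $e(U)=1$ (equivalently $e(D_1\cup D_2)=e(X)-1$), fed through the known Betti/Hodge numbers of each Mori--Mukai family to constrain the possible boundary configurations $(D_1,D_2,D_1\cap D_2)$; this is exactly the point the paper's Remark flags, and it explains why the theorem is stated for contractible $U$ rather than merely for $U$ with trivial Picard group. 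Without that Euler-characteristic bookkeeping the case analysis you sketch does not close.
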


\begin{remark}[Topology]
 Kishimoto's classification relies on the fact that the affine space $\mathbf{A}^3$ is contractible (cf. \cite[Corollary 2.1]{Kis05}), which is used several times in \cite{Kis05} for computing topological Euler characteristics. This explains why the varieties considered in this article are defined over the field of complex numbers.
\end{remark}

\section{Additive Fano threefolds}\label{section:Fano 3folds}

From now on we will consider smooth Fano threefolds which are equivariant compactifications of $\mathbf{G}_a^3$.

\subsection{Toric Fano threefolds}\label{section:Toric Fano 3folds}

The aim of this section is to treat the toric case. We may refer the reader to \cite{CLS} for the general theory of toric varieties. 

Toric Fano threefolds where classified by Baryrev in \cite{Bat82} and by Watanabe and Watanabe in \cite{WW82}. Their classification reads as follows.

\begin{thm}[Batyrev, Watanabe \& Watanabe]\label{theo:toric fano 3folds} Let $X$ be a smooth toric Fano threefold. Then $X$ is isomorphic to one of the following toric varieties.
\begin{enumerate}
 \item $\mathbf{P}^3$.
 \item $\II_{33}$, the blow-up of $\mathbf{P}^3$ along a line or, equivalently, the projective bundle $\mathbf{P}(\mathcal{O}_{\mathbf{P}^1}^{\oplus 2}\oplus \mathcal{O}_{\mathbf{P}^1}(1))$.
 \item $\II_{34} \cong \mathbf{P}^1 \times \mathbf{P}^2$.
 \item $\II_{35}$, the blow-up of $\mathbf{P}^3$ at a point $p$ or, equivalently, the projective bundle $\mathbf{P}(\mathcal{O}_{\mathbf{P}^2}\oplus \mathcal{O}_{\mathbf{P}^2}(1))$.
 \item $\II_{36}\cong \mathbf{P}(\mathcal{O}_{\mathbf{P}^2}\oplus \mathcal{O}_{\mathbf{P}^2}(2))$.
 \item $\III_{25}$, the blow-up of $\mathbf{P}^3$ along two disjoint lines or, equivalently, the projective bundle $\mathbf{P}(\mathcal{O}_{\mathbf{P}^1\times \mathbf{P}^1}(1,0)\oplus \mathcal{O}_{\mathbf{P}^1\times \mathbf{P}^1}(0,1))$.
 \item $\III_{26}$, the blow-up of $\mathbf{P}^3$ along the disjoint union of a point and a line.
 \item $\III_{27} \cong \mathbf{P}^1 \times \mathbf{P}^1 \times \mathbf{P}^1$.
 \item $\III_{28} \cong \mathbf{P}^1 \times \mathbf{F}_1$.
 \item $\III_{29}$, the blow-up of $\mathbf{P}(\mathcal{O}_{\mathbf{P}^2}\oplus \mathcal{O}_{\mathbf{P}^2}(1))$ with center a line on the exceptional divisor of the blow-up $\mathbf{P}(\mathcal{O}_{\mathbf{P}^2}\oplus \mathcal{O}_{\mathbf{P}^2}(1))\to \mathbf{P}^3$.
 \item $\III_{30}$, the blow-up of $\mathbf{P}(\mathcal{O}_{\mathbf{P}^2}\oplus \mathcal{O}_{\mathbf{P}^2}(1))$ with center the strict transform of a line passing through the center of the blow-up $\mathbf{P}(\mathcal{O}_{\mathbf{P}^2}\oplus \mathcal{O}_{\mathbf{P}^2}(1))\to \mathbf{P}^3$.
 \item $\III_{31}\cong \mathbf{P}(\mathcal{O}_{\mathbf{P}^1\times \mathbf{P}^1}\oplus \mathcal{O}_{\mathbf{P}^1\times \mathbf{P}^1}(1,1))$.
 \item $\IV_{9}$, the blow-up of $\mathbf{P}(\mathcal{O}_{\mathbf{P}^1\times \mathbf{P}^1}(1,0)\oplus \mathcal{O}_{\mathbf{P}^1\times \mathbf{P}^1}(0,1))$ with center an exceptional line of the blow-up $\mathbf{P}(\mathcal{O}_{\mathbf{P}^1\times \mathbf{P}^1}(1,0)\oplus \mathcal{O}_{\mathbf{P}^1\times \mathbf{P}^1}(0,1)) \to \mathbf{P}^3$.
 \item $\IV_{10} \cong \mathbf{P}^1 \times \mathbf{S}_7$.
 \item $\IV_{11}$, the blow-up of $\mathbf{P}^1 \times \mathbf{F}_1$ with center $\{t\}\times \{E\}$, where $t\in \mathbf{P}^1$ and $E\subseteq \mathbf{F}_1$ is an exceptional curve of the first kind.
 \item $\IV_{12}$, the blow-up of $\mathbf{P}(\mathcal{O}_{\mathbf{P}^1}^{\oplus 2}\oplus \mathcal{O}_{\mathbf{P}^1}(1))$ with center two exceptional lines of the blow-up $\mathbf{P}(\mathcal{O}_{\mathbf{P}^1}^{\oplus 2}\oplus \mathcal{O}_{\mathbf{P}^1}(1))\to \mathbf{P}^3$.
 \item $\V_{2}$, the blow-up of $\mathbf{P}(\mathcal{O}_{\mathbf{P}^1\times \mathbf{P}^1}(1,0)\oplus \mathcal{O}_{\mathbf{P}^1\times \mathbf{P}^1}(0,1))$ with center two exceptional lines $L_1$ and $L_2$ of the blow-up $$\varepsilon:\mathbf{P}(\mathcal{O}_{\mathbf{P}^1\times \mathbf{P}^1}(1,0)\oplus \mathcal{O}_{\mathbf{P}^1\times \mathbf{P}^1}(0,1))\to \mathbf{P}^3$$
 such that both $L_1$ and $L_2$ lie on the same irreducible component of the exceptional set $\operatorname{Exc}(\varepsilon)$.
 \item $\V_{3}\cong \mathbf{P}^1 \times \mathbf{S}_6$.
\end{enumerate}
\end{thm}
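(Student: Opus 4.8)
The plan is to translate the statement into a finite combinatorial enumeration, organised by Picard number. \emph{Reduction to polytopes.} A smooth complete toric threefold $X$ with fan $\Sigma$ in a rank-$3$ lattice $N$ is Fano precisely when the primitive generators $v_1,\dots,v_m$ of the rays of $\Sigma$ are the vertices of a lattice polytope $P\subseteq N_{\mathbf{R}}$ with $0$ in its interior whose face fan is $\Sigma$; smoothness then says exactly that the vertices of every facet of $P$ form a $\mathbf{Z}$-basis of $N$. Two such threefolds are isomorphic if and only if the corresponding polytopes are $\operatorname{GL}_3(\mathbf{Z})$-equivalent, and $\rho(X)=m-3$. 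So it suffices to enumerate, up to unimodular equivalence, all smooth Fano polytopes of dimension $3$.

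\emph{Bounding the Picard number.} Following Batyrev, one attaches to each primitive collection (a minimal subset of the rays of $\Sigma$ that does not span a cone) a primitive relation together with its degree; the Fano condition is equivalent to every primitive relation having positive degree, and combining this with the constraints governing how primitive collections can overlap one gets $m\le 8$, i.e.\ $\rho(X)\le 5$. (Alternatively one may invoke the general bound $\rho(X)\le 2\dim X$ and then exclude $\rho(X)=6$ by a short direct argument on the polytope.)

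\emph{Enumeration.} For $\rho(X)=1$ the polytope is a simplex with $0$ in its interior and unimodular facets, forcing $X\cong\mathbf{P}^3$. For $\rho(X)=2$ a direct analysis of $5$-vertex smooth Fano polytopes — equivalently, of the two elementary extremal contractions of $X$, each of fibre type or divisorial, together with ampleness of $-K_X$, which bounds the twists of a split projective bundle — produces exactly $\II_{33},\II_{34},\II_{35},\II_{36}$. For $\rho(X)\ge 3$ I would argue inductively. Since $-K_X$ is ample every extremal contraction of $X$ is of fibre type or divisorial — on a smooth threefold a $K$-negative extremal ray is never small — so either $X$ is a product of lower-dimensional toric Fano varieties, or a $\mathbf{P}^1$- or $\mathbf{P}^2$-fibration over $\mathbf{P}^1$ or over a toric del Pezzo surface (these ``primitive'' cases one lists directly), or $X$ carries a divisorial contraction onto a smooth projective toric threefold $X'$ with $\rho(X')=\rho(X)-1$. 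In the last case, running the two-ray game as in Mori--Mukai one may arrange $X'$ to be Fano, so that $X$ is the equivariant blow-up, along an invariant point or invariant rational curve, of a toric Fano threefold already on the list for Picard number $\rho(X)-1$. Feeding back the lists for $\rho-1=2,3,4$ and keeping exactly those blow-ups for which $-K$ stays ample produces the seven families $\III_{25},\dots,\III_{31}$, the four families $\IV_9,\dots,\IV_{12}$ and the two families $\V_2,\V_3$; one checks there are no repetitions by separating cases with the discrete invariants $\rho(X)$, $(-K_X)^3$ and the numbers of invariant curves of each $-K_X$-degree.

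\emph{Main obstacle.} The conceptual input is modest; essentially all the work sits in the last step, and two points need care. First, the reduction ``$X$ is a blow-up of a Fano threefold of smaller Picard number'' is not automatic, since a divisorial contraction of a Fano need not have Fano target; making it rigorous requires either the two-ray game or a direct return to the polytope enumeration. Second, the finite but lengthy verification: for every toric Fano threefold of Picard number $2,3,4$ and every torus-invariant point or rational curve on it, one must decide whether the equivariant blow-up is Fano, which amounts to a convexity check on the new polytope, equivalently to checking $-K_X\cdot C>0$ for all invariant curves $C$. The stratum $\rho(X)=3$, with its seven outcomes, is where most of this case analysis lies. The original complete treatments are \cite{Bat82} and \cite{WW82}.
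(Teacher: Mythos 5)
This statement is quoted in the paper as a known classification (Batyrev, Watanabe--Watanabe) with no proof given; the authors simply cite \cite{Bat82} and \cite{WW82} and use the list as input. So there is no in-paper argument to compare yours against, and I can only assess your sketch on its own terms. Your outline is the standard strategy and each structural ingredient you invoke is correct: the dictionary between smooth toric Fano threefolds and smooth Fano polytopes up to $\operatorname{GL}_3(\mathbf{Z})$, the identity $\rho(X)=m-3$, the fact that $K$-negative extremal rays on a smooth threefold are never small, and the bound on the number of rays via primitive relations. You also correctly flag the two genuinely delicate points, namely that a divisorial contraction of a Fano need not land on a Fano (so the inductive blow-down step needs the two-ray game or a return to the polytopes), and that the bulk of the work is the finite enumeration.

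The gap is that the enumeration is never performed: the sentence ``feeding back the lists \dots produces the seven families $\III_{25},\dots,\III_{31}$, the four families $\IV_9,\dots,\IV_{12}$ and the two families $\V_2,\V_3$'' asserts the conclusion of the theorem rather than deriving it. Since the entire content of the statement is the explicit eighteen-item list, a proof must actually carry out the case analysis (all torus-invariant points and curves on each threefold of Picard number $2$, $3$, $4$, the convexity check on each blown-up polytope, and the verification that the resulting varieties are pairwise non-isomorphic and that nothing is missed in the primitive cases). The discrete invariants you propose for distinguishing the outcomes, such as $(-K_X)^3$, are reasonable but are likewise not computed. As it stands this is a correct and well-organised plan for the proof, matching the route of the original references, but not yet a proof.
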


Theorem \ref{theo:additive toric varieties} allows us determine which of the varieties in Theorem \ref{theo:toric fano 3folds} are equivariant compactification of $\Ga^3$. 

\begin{propo}\label{prop:toric case}
Let $X$ be a smooth toric Fano threefold. Suppose that $X$ admits a $\Ga^3$-structure. Then $X$ is isomorphic to one of the following.
\begin{enumerate}
 \item $\mathbf{P}^3$.
 \item $\II_{33}$, the blow-up of $\mathbf{P}^3$ with center a line.
 \item $\II_{34}\cong\mathbf{P}^1 \times \mathbf{P}^2$.
 \item $\II_{35}\cong\mathbf{P}(\mathcal{O}_{\mathbf{P}^2}\oplus \mathcal{O}_{\mathbf{P}^2}(1))$, the blow-up of $\mathbf{P}^3$ with center a point.
 \item $\II_{36}\cong\mathbf{P}(\mathcal{O}_{\mathbf{P}^2}\oplus \mathcal{O}_{\mathbf{P}^2}(2))$.
 \item $\III_{26}$, the blow-up of $\mathbf{P}^3$ with center a disjoint union of a point and a line.
 \item $\III_{27}\cong\mathbf{P}^1 \times \mathbf{P}^1 \times \mathbf{P}^1.$
 \item $\III_{28}\cong\mathbf{P}^1\times \mathbf{F}_1$. 
 \item $\III_{29}$, the blow-up of $\mathbf{P}(\mathcal{O}_{\mathbf{P}^2}\oplus \mathcal{O}_{\mathbf{P}^2}(1))$ with center a line on the exceptional divisor of the blow-up $\mathbf{P}(\mathcal{O}_{\mathbf{P}^2}\oplus \mathcal{O}_{\mathbf{P}^2}(1))\to \mathbf{P}^3$.
 \item $\III_{30}$, the blow-up of $\mathbf{P}(\mathcal{O}_{\mathbf{P}^2}\oplus \mathcal{O}_{\mathbf{P}^2}(1))$ with center the strict transform of a line passing through the center of the blow-up $\mathbf{P}(\mathcal{O}_{\mathbf{P}^2}\oplus \mathcal{O}_{\mathbf{P}^2}(1))\to \mathbf{P}^3$.
 \item $\III_{31}\cong \mathbf{P}(\mathcal{O}_{\mathbf{P}^1\times \mathbf{P}^1}\oplus \mathcal{O}_{\mathbf{P}^1\times \mathbf{P}^1}(1,1))$.
 \item $\IV_{10} \cong \mathbf{P}^1 \times \mathbf{S}_7$.
 \item $\IV_{11}$, the blow-up of $\mathbf{P}^1 \times \mathbf{F}_1$ with center $\{t\}\times \{E\}$, where $t\in \mathbf{P}^1$ and $E\subseteq \mathbf{F}_1$ is an exceptional curve of the first kind.
 \item $\IV_{12}$, the blow-up of $\mathbf{P}(\mathcal{O}_{\mathbf{P}^1}^{\oplus 2}\oplus \mathcal{O}_{\mathbf{P}^1}(1))$ with center two exceptional lines of the blow-up $\mathbf{P}(\mathcal{O}_{\mathbf{P}^1}^{\oplus 2}\oplus \mathcal{O}_{\mathbf{P}^1}(1))\to \mathbf{P}^3$.
\end{enumerate}
Conversely, every toric Fano threefold in the above list admits a $\Ga^3$-structure.
\end{propo}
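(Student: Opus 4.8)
The proof is a finite case analysis resting on two inputs already in hand: the classification of smooth toric Fano threefolds (Theorem~\ref{theo:toric fano 3folds}) and the Arzhantsev--Romaskevich criterion (Theorem~\ref{theo:additive toric varieties}), according to which such an $X$ is an equivariant compactification of $\Ga^3$ precisely when its anticanonical polytope $P_{-K_X}$ is inscribed in a rectangle in the sense of Definition~\ref{defi:inscribed polytope}. So the plan is to run through the eighteen families of Theorem~\ref{theo:toric fano 3folds} and decide, for each, whether $P_{-K_X}$ meets the conditions of Definition~\ref{defi:inscribed polytope}. For the fourteen families that do, one writes down the fan (equivalently, the polytope), singles out a vertex $v_0$, checks that the primitive generators of the edges through $v_0$ form a lattice basis, and checks the non-negativity condition on the normals of the remaining facets; these computations are collected in Appendix~\ref{appendix}. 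Many of the positive cases also have softer proofs: $\II_{34}$, $\III_{27}$, $\III_{28}$ and $\IV_{10}$ are products of additive varieties (using that $\mathbf{P}^1$, $\mathbf{P}^2$, $\mathbf{F}_1$ and $\mathbf{S}_7$ are additive, cf. Example~\ref{example:del Pezzo}), while $\II_{33}$, $\II_{35}$, $\III_{26}$, $\III_{29}$, $\III_{30}$, $\IV_{11}$ and $\IV_{12}$ arise as blow-ups of $\mathbf{P}^3$ or of an already-treated additive threefold along centers that can be arranged to be invariant for a suitable additive structure, so that Corollary~\ref{co:equivariant blow-up} applies; only $\II_{36}$ and $\III_{31}$ genuinely rest on the polytope criterion on the positive side.

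For the negative direction we must exclude $\III_{25}$, $\IV_9$, $\V_2$ and $\V_3$. The key case is $\III_{25}$, the blow-up of $\mathbf{P}^3$ along two disjoint lines $\ell_1\sqcup\ell_2$. A $\Ga^3$-structure on $\III_{25}$ would, by Corollary~\ref{co:equivariant blow-up}, descend to one on $\mathbf{P}^3$ for which both $\ell_1$ and $\ell_2$ are invariant. But for any $\Ga^3$-structure on $\mathbf{P}^3$ the complement of the boundary is a single open orbit, simply transitive and isomorphic to $\mathbf{A}^3$, while the boundary is by Proposition~\ref{prop:boundary divisors} an irreducible divisor generating $\operatorname{Cl}(\mathbf{P}^3)$, hence a hyperplane $\cong\mathbf{P}^2$; therefore every invariant curve lies in this plane, and two lines of $\mathbf{P}^2$ always meet. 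This contradiction shows that $\III_{25}$ is not an equivariant compactification of $\Ga^3$. Since $\IV_9$ and $\V_2$ are, by Theorem~\ref{theo:toric fano 3folds}, blow-ups of $\III_{25}$ along unions of exceptional lines, Corollary~\ref{co:equivariant blow-up} excludes them as well.

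It remains to treat $\V_3\cong\mathbf{P}^1\times\mathbf{S}_6$. Suppose $\Ga^3$ acts on it with a dense orbit. By Corollary~\ref{co:productauto} the action is the product of an action on $\mathbf{P}^1$ and one on $\mathbf{S}_6$, i.e. it comes from a homomorphism $\Ga^3\to\operatorname{Aut}^0(\mathbf{P}^1)\times\operatorname{Aut}^0(\mathbf{S}_6)$; let $H\subseteq\operatorname{Aut}^0(\mathbf{S}_6)$ denote the image of $\Ga^3$ under the second projection. Then $H$ is a quotient of $\Ga^3$, hence a vector group, and the image of the dense orbit under the projection $\mathbf{P}^1\times\mathbf{S}_6\to\mathbf{S}_6$ is a dense $H$-orbit on $\mathbf{S}_6$. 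The stabilizer in $H$ of a general point of $\mathbf{S}_6$ is a closed subgroup of a vector group, hence again a vector group, so $\mathbf{S}_6$ is an equivariant compactification of the two-dimensional vector group $H/\operatorname{Stab}\cong\Ga^2$, contradicting Example~\ref{example:del Pezzo}. (Alternatively, one checks directly in Appendix~\ref{appendix} that $P_{-K_{\V_3}}$ is not inscribed in a rectangle.)

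The main obstacle is the bookkeeping for the positive direction: one must produce the eighteen anticanonical polytopes explicitly and verify, case by case, the somewhat delicate requirements of Definition~\ref{defi:inscribed polytope} -- choosing the correct vertex $v_0$, confirming that the edges at $v_0$ span the lattice, and checking the sign of $\langle p,e_i\rangle$ for every other facet. This finite but lengthy verification is carried out in Appendix~\ref{appendix}, and together with the soft arguments above and the exclusions of $\III_{25}$, $\IV_9$, $\V_2$, $\V_3$ it yields both implications of the proposition.
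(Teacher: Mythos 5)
Your proposal is correct and follows essentially the same route as the paper: the Batyrev/Watanabe--Watanabe classification combined with the Arzhantsev--Romaskevich polytope criterion for the computational cases, Blanchard's lemma and the equivariant blow-up correspondence for the blow-downs and exclusions, and the product decomposition of $\operatorname{Aut}^0$ to rule out $\mathbf{P}^1\times\mathbf{S}_6$. The only (harmless) variations are that you exclude $\III_{25}$ via the general boundary-divisor argument of Proposition~\ref{prop:boundary divisors} rather than the explicit list of Corollary~\ref{coro:P3}, and that your unverified aside that $\III_{29}$, $\IV_{11}$, $\IV_{12}$ admit invariant centers is not load-bearing, since you defer, as the paper does, to the polytope computations of Appendix~\ref{appendix}.
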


\begin{proof}
By Blanchard's lemma, the fact that the varieties $\III_{26}$, $\III_{29}$, $\IV_{10}, \IV_{11}$ and $\IV_{12}$ are equivariant compactifications of $\Ga^3$ implies that all the other varieties listed above have the same property as well, since they are obtained as blow-downs of these three varieties by \cite{MM81} (cf. family tree in \S 1). The fact that $\III_{26}$ is additive follows from Corollary \ref{coro:P3} and the fact that $\IV_{10}\cong \mathbb{P}^1\times \mathbf{S}_7$ is additive follows from Example \ref{example:del Pezzo}. We refer the reader to Appendix \ref{appendix}, where we verify that $\III_{29}, \IV_{11}$ and $\IV_{12}$ are equivariant compactifications of $\Ga^3$.

Similarly, the fact that $\III_{25}$ is not an equivariant compactification of $\Ga^3$, which follows from from Corollary \ref{coro:P3}, implies that $\IV_9$ and $\V_2$ neither are, since they are obtained as blow-ups of this variety by \cite{MM81}. Finally, $\V_3 \cong \mathbf{P}^1 \times \mathbf{S}_6$ is not an equivariant compactifications of $\Ga^3$ by Lemma \ref{lemma:product actions} (cf. Example \ref{example:del Pezzo}).
\end{proof}

By comparing the varieties appearing in Proposition \ref{prop:toric case} and the list of primitive Fano threefolds in \cite{MM81} we observe the following consequence of the classification of additive toric Fano threefolds.

\begin{cor}\label{primitive toric}
All primitive smooth toric Fano threefolds are equivariant compactifications of $\Ga^3$.
\end{cor}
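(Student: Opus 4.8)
The plan is to extract from the list in Proposition~\ref{prop:toric case} the sublist of those varieties that are \emph{primitive} in the sense of the Convention (i.e.\ not the blow-up of a smooth Fano threefold along a curve), and to check that it coincides with the list of all primitive smooth toric Fano threefolds obtained from the Mori--Mukai tables \cite{MM81}. Since we are only asked whether every primitive smooth toric Fano threefold carries a $\Ga^3$-structure, it suffices to verify the inclusion: every primitive toric Fano threefold appears among the varieties of Proposition~\ref{prop:toric case}.

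First I would recall which smooth toric Fano threefolds are primitive. From the Mori--Mukai classification (and the structure of the family tree in \S\ref{section:intro}, or equivalently \cite{MM81}), the primitive ones with $B_2 \geq 2$ are exactly $\II_{33}$, $\II_{34}$, $\II_{35}$, $\II_{36}$, $\III_{27}$, $\III_{31}$, together with $\mathbf{P}^3$ in Picard number one; all other toric Fano threefolds with $B_2 \geq 2$ — namely $\III_{25}$, $\III_{26}$, $\III_{28}$, $\III_{29}$, $\III_{30}$, $\IV_9$, $\IV_{10}$, $\IV_{11}$, $\IV_{12}$, $\V_2$, $\V_3$ — are imprimitive, being realized as a blow-up of a smooth (toric) Fano threefold along a curve, as recorded in Theorem~\ref{theo:toric fano 3folds}. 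Thus the primitive smooth toric Fano threefolds are precisely $\mathbf{P}^3$, $\II_{33}$, $\II_{34}$, $\II_{35}$, $\II_{36}$, $\III_{27}$, $\III_{31}$.

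Next I would simply read off from Proposition~\ref{prop:toric case} that each of these seven varieties admits a $\Ga^3$-structure: $\mathbf{P}^3$ is item (1), $\II_{33}$ is item (2), $\II_{34}$ is item (3), $\II_{35}$ is item (4), $\II_{36}$ is item (5), $\III_{27}$ is item (7), and $\III_{31}$ is item (11). This establishes the corollary. (Conversely, inspecting Proposition~\ref{prop:toric case}, one sees that apart from these primitive varieties, the additive toric Fano threefolds $\III_{26}$, $\III_{28}$, $\III_{29}$, $\III_{30}$, $\IV_{10}$, $\IV_{11}$, $\IV_{12}$ are all imprimitive — consistent with the remark in \S\ref{section:intro} that a posteriori all primitive additive Fano threefolds of Picard number $\geq 2$ are toric.)

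The only genuine point requiring care — and the step I expect to be the main obstacle — is the bookkeeping: correctly identifying the primitive members among the eighteen smooth toric Fano threefolds of Theorem~\ref{theo:toric fano 3folds}, since primitivity is a property about \emph{all} possible blow-down structures and must be read from the Mori--Mukai tables rather than deduced abstractly. Once the seven primitive toric Fano threefolds are correctly isolated, the rest of the argument is a direct comparison with the list in Proposition~\ref{prop:toric case} and involves no further computation.
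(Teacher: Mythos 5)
Your approach is the same as the paper's: the corollary is obtained by directly comparing the list in Proposition~\ref{prop:toric case} with the primitive members of the Mori--Mukai tables, and the logical structure (every primitive toric Fano threefold lies in a finite list, each member of which is additive) is sound. However, your bookkeeping contains exactly the kind of slip you warn against at the end: $\II_{33}$ is the blow-up of $\mathbf{P}^3$ along a line, i.e.\ along a curve, so by the paper's Convention it is \emph{imprimitive}; this contradicts the criterion you invoke in the very same sentence when you declare the remaining varieties imprimitive ``being realized as a blow-up of a smooth toric Fano threefold along a curve, as recorded in Theorem~\ref{theo:toric fano 3folds}'' --- that theorem records $\II_{33}$ itself as such a blow-up. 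The correct list of primitive smooth toric Fano threefolds is $\mathbf{P}^3$, $\II_{34}$, $\II_{35}$, $\II_{36}$, $\III_{27}$, $\III_{31}$ (these are the double-boxed vertices with no outgoing solid arrow in the family tree of the introduction). Since your list is a superset of the correct one and every variety on it does appear in Proposition~\ref{prop:toric case}, the conclusion of the corollary is unaffected, but the misclassification should be corrected.
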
 

\subsection{Fano threefolds with $B_2=1$} It is known after the work of Iskovskikh \cite{Isk77,Isk78,Isk79} and Shokurov \cite{Sho79} that there are 17 families of smooth Fano threefolds with Picard number 1. The classification of smooth additive threefolds with Picard number 1 due to Hassett and Tschinkel (see Theorem \ref{theo:HT}) states that only two among them are additive: $\mathbf{P}^3$ and the smooth quadric hypersurface $\mathcal{Q}_3\subseteq \mathbf{P}^4$.

\subsection{Fano threefolds with $B_2=2$} All the 36 families of smooth Fano threefolds with Picard number 2 were listed by Mori and Mukai in \cite[Table 2]{MM81}. Among these 36 families, Kishimoto proved that only 16 of them admits a member which is the compactification of the affine space $\mathbf{A}^3$ in such a way the associated log canonical divisor is not nef (see \S \ref{section:kishimoto}). In this section we prove that only 7 among these 16 families have members which are equivariant compactifications of $\Ga^3$ (see Proposition \ref{prop:B_2=2} below).

The following result is a direct consequence of Kishimoto's classification.

\begin{lemma}
 The Fano threefolds $\II_{1},\II_{2},\II_{3},\II_{4}, \II_{5}$, $\II_{6}$, $\II_{7}$, $\II_{8}$, $\II_{9}$, $\II_{10}$, $\II_{11}$, $\II_{12}$, $\II_{13}$, $\II_{15}$, $\II_{16}$, $\II_{17}$, $\II_{18}$, $\II_{19}$, $\II_{21}$, $\II_{23.(a)}$ and $\II_{25}$ {\bf are not} equivariant compactifications of $\Ga^3$.
\end{lemma}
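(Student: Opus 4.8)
The plan is to combine Theorem~\ref{theo:canonical divisor} with Kishimoto's classification (Theorem~\ref{theo:kishimoto}). Recall that if a smooth Fano threefold $X$ is an equivariant compactification of $\Ga^3$, then by Proposition~\ref{prop:boundary divisors} the boundary $X\setminus \Ga^3=D_1\cup\cdots\cup D_\rho$ consists of exactly $\rho=B_2(X)$ prime divisors, and by Theorem~\ref{theo:canonical divisor} the log canonical divisor $K_X+\sum_i D_i$ is \emph{not nef}. Moreover $\Ga^3\cong \mathbf{A}^3$ as a scheme, so $X$ is in particular a smooth Fano threefold with $B_2(X)=2$ containing an open subset isomorphic to $\mathbf{A}^3$ whose complementary boundary carries a non-nef log canonical divisor. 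Therefore $X$ must appear in the list of Theorem~\ref{theo:kishimoto}.

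The key step is then the following observation: the $16$ deformation classes in Kishimoto's list are $\II_{14}$, $\II_{20}$, $\II_{22}$, $\II_{23.(b)}$, $\II_{24}$, $\II_{26}$, $\II_{27}$, $\II_{28}$, $\II_{29}$, $\II_{30}$, $\II_{31}$, $\II_{32}$, $\II_{33}$, $\II_{34}$, $\II_{35}$, $\II_{36}$. Consequently, for every family $\II_k$ with $k\in\{1,\dots,13\}\cup\{15,16,17,18,19,21,25\}$, \emph{no} member is a compactification of $\mathbf{A}^3$ with non-nef boundary, and the same holds for the sub-family $\II_{23.(a)}$, which by definition (blow-up of $\mathcal{Q}_3$ along the intersection of a \emph{smooth} $A\in|\mathcal{O}_{\mathcal{Q}_3}(1)|$ with $B\in|\mathcal{O}_{\mathcal{Q}_3}(2)|$) is precisely the part of family $\II_{23}$ not covered by $\II_{23.(b)}$. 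Hence none of these varieties can be an equivariant compactification of $\Ga^3$: such a compactification would violate the conjunction of Proposition~\ref{prop:boundary divisors}, Theorem~\ref{theo:canonical divisor} and Theorem~\ref{theo:kishimoto}. The proof is thus a short argument by exclusion; I would write it as: "Suppose $X$ belongs to one of these families and is an equivariant compactification of $\Ga^3$. By Theorem~\ref{theo:canonical divisor} the log canonical divisor $K_X+D_1+D_2$ is not nef, where $D_1\cup D_2$ is the boundary. Since $X\setminus(D_1\cup D_2)\cong\mathbf{A}^3$ is (smooth and) contractible, Theorem~\ref{theo:kishimoto} applies and forces $X$ to lie in one of the $16$ listed classes, contradiction."

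I do not anticipate a genuine obstacle: the only point requiring a word of care is the bookkeeping of $\II_{23.(a)}$ versus $\II_{23.(b)}$ — one must note that Kishimoto's list contains only $\II_{23.(b)}$, i.e. the case where the hyperplane section $A$ is singular, so that the generic (smooth-$A$) member $\II_{23.(a)}$ is genuinely excluded. Everything else is a direct citation. I would therefore keep the proof to two or three sentences, invoking Theorems~\ref{theo:canonical divisor} and~\ref{theo:kishimoto} together with Proposition~\ref{prop:boundary divisors}, and explicitly flag the $\II_{23.(a)}$ point.

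\begin{proof}
Suppose, for contradiction, that $X$ is a member of one of the listed families which is an equivariant compactification of $\Ga^3$. Write the boundary as $X\setminus\Ga^3=D_1\cup D_2$; this is indeed a union of two prime divisors by Proposition~\ref{prop:boundary divisors}, since $B_2(X)=2$. By Theorem~\ref{theo:canonical divisor} the log canonical divisor $K_X+D_1+D_2$ is not nef, and $U:=X\setminus(D_1\cup D_2)\cong\mathbf{A}^3$ is a contractible affine threefold. Hence the triplet $(X,D_1\cup D_2,U)$ satisfies the hypotheses of Theorem~\ref{theo:kishimoto}, so $X$ must belong to one of the $16$ deformation classes $\II_{14}$, $\II_{20}$, $\II_{22}$, $\II_{23.(b)}$, $\II_{24}$, $\II_{26}$, $\II_{27}$, $\II_{28}$, $\II_{29}$, $\II_{30}$, $\II_{31}$, $\II_{32}$, $\II_{33}$, $\II_{34}$, $\II_{35}$, $\II_{36}$ there listed. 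None of the families in the statement appears in this list: the families $\II_{1},\dots,\II_{13}$, $\II_{15},\dots,\II_{19}$, $\II_{21}$ and $\II_{25}$ are absent, and $\II_{23.(a)}$ is excluded because Kishimoto's list contains only $\II_{23.(b)}$, i.e. the sub-family for which the hyperplane section $A$ is singular. This contradiction proves the assertion.
\end{proof}
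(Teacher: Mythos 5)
Your proof is correct and follows exactly the same route as the paper: combine Theorem \ref{theo:canonical divisor} (non-nefness of the log canonical divisor) with Kishimoto's classification (Theorem \ref{theo:kishimoto}) and observe that the listed families are absent from the sixteen classes there. Your explicit bookkeeping of $\II_{23.(a)}$ versus $\II_{23.(b)}$ is a welcome clarification, but the argument is the same as the paper's.
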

\begin{proof}
On one hand, it follows from Theorem \ref{theo:canonical divisor} that if $X$ is an equivariant compactification of $\Ga^3$ with Picard number two and boundary divisors $D_1, D_2$, then the log canonical divisor $K_X+D_1+D_2$ is not nef. 

On the other hand, compactifications of the affine space $\mathbf{A}^3$ into smooth Fano threefolds with Picard number two such that the corresponding log canonical divisor is not nef were classified by Kishimoto in Theorem \ref{theo:kishimoto}. The listed varieties do not appear in Kishimoto's classification and therefore they are not equivariant compactifications of $\Ga^3$.
\end{proof}

\begin{lemma}
 The Fano threefolds $\II_{14},\II_{20},\II_{22}$ and $\II_{26}$ {\bf are not} equivariant compactifications of $\Ga^3$.
\end{lemma}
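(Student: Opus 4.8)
The statement to prove is that the Fano threefolds $\II_{14}$, $\II_{20}$, $\II_{22}$ and $\II_{26}$ are \emph{not} equivariant compactifications of $\Ga^3$. All four of these appear in Kishimoto's list (Theorem~\ref{theo:kishimoto}), so the previous lemma's argument (absence from Kishimoto's list) does not apply; a finer obstruction is needed. The common feature of these four varieties is that each is obtained as the blow-up of the Fano threefold $V_5$ along a curve: $\II_{14}$ along an elliptic curve which is the intersection of two members of $|-\frac12 K_{V_5}|$, $\II_{20}$ along a twisted cubic, $\II_{22}$ along a conic, and $\II_{26}$ along a line.

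\medskip

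The plan is to exploit Corollary~\ref{co:equivariant blow-up}: if $X = \operatorname{Bl}_Z V_5$ carried a $\Ga^3$-structure, then the structure would descend to a $\Ga^3$-structure on $V_5$ fixing the curve $Z$. But $V_5$ is a smooth Fano threefold of Picard number one which is \emph{not} isomorphic to $\mathbf{P}^3$ or to $\mathcal{Q}_3 \subseteq \mathbf{P}^4$, so by the Hassett--Tschinkel classification (Theorem~\ref{theo:HT}) $V_5$ admits \emph{no} $\Ga^3$-structure at all. This immediately yields a contradiction, and hence none of $\II_{14}, \II_{20}, \II_{22}, \II_{26}$ can be an equivariant compactification of $\Ga^3$.

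\medskip

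Concretely, I would argue as follows. First recall that each of these four threefolds is, by Mori--Mukai's classification \cite{MM81} (as recorded in Theorem~\ref{theo:kishimoto}), the blow-up $\pi : X \to V_5$ of $V_5$ along a smooth curve $Z \subseteq V_5$; the blow-down morphism $\pi$ is birational with connected fibres, so $\pi_*\mathcal{O}_X = \mathcal{O}_{V_5}$ by Zariski's Main Theorem. Suppose, for contradiction, that $X$ admits a $\Ga^3$-structure. By Corollary~\ref{co:equivariant blow-up} (with $G = \Ga^3$), this structure descends to a $\Ga^3$-structure on $V_5$, i.e.\ $V_5$ is an equivariant compactification of $\Ga^3$. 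Since $B_2(V_5) = 1$ (i.e.\ $V_5$ has Picard number one), Theorem~\ref{theo:HT} forces $V_5 \cong \mathbf{P}^3$ or $V_5 \cong \mathcal{Q}_3$. This is absurd: $V_5$ is the smooth Fano threefold of index $2$ and degree $5$ (a codimension-three linear section of the Grassmannian $\operatorname{Gr}(2,5)$), which has, for instance, anticanonical degree $(-K_{V_5})^3 = 40$, whereas $(-K_{\mathbf{P}^3})^3 = 64$ and $(-K_{\mathcal{Q}_3})^3 = 54$; alternatively one can invoke that $V_5$ is not rational-homogeneous and is not a quadric, so it is neither $\mathbf{P}^3$ nor $\mathcal{Q}_3$. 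Hence no $\Ga^3$-structure on $X$ can exist.

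\medskip

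There is essentially no hard calculational step here; the only point that requires a small amount of care is verifying that the descent hypothesis of Corollary~\ref{co:equivariant blow-up} is genuinely met — namely that the contraction to $V_5$ is the blow-up along a \emph{subvariety} (so that Blanchard's lemma applies and the descended action fixes the center), which is exactly how Mori--Mukai describe these four families. The main (minor) subtlety, and the only place one must be attentive, is that Corollary~\ref{co:equivariant blow-up} is a statement about the particular blow-up morphism; since each of $\II_{14}, \II_{20}, \II_{22}, \II_{26}$ is by definition realized as such a blow-up of $V_5$, and \emph{any} $\Ga^3$-structure on $X$ would have to fix the exceptional divisor of the extremal contraction and hence descend, the argument is insensitive to which member of the family or which particular center $Z$ is chosen. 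This completes the proof.
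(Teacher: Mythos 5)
Your proof is correct and follows essentially the same route as the paper: both descend a hypothetical $\Ga^3$-structure along the blow-up $X\to V_5$ via Blanchard's lemma (Corollary \ref{co:equivariant blow-up}) and then contradict the Hassett--Tschinkel classification (Theorem \ref{theo:HT}), since $V_5$ is neither $\mathbf{P}^3$ nor $\mathcal{Q}_3$. The extra detail you supply (e.g.\ the anticanonical degree comparison) is a harmless elaboration of the paper's one-line argument.
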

\begin{proof}
 It follows from Mori and Mukai classification \cite[Table 2]{MM81} that all these varieties are obtained as the blow-up along a curve inside the Fano threefold $V_5$ of Picard number one. If one of these varieties is an equivariant compactification of $\Ga^3$ then we have by Blanchard's lemma that $V_5$ would be such a compactification as well, which contradicts Theorem \ref{theo:HT} above.
\end{proof}

\begin{lemma}
 The Fano threefold $\II_{32}$ {\bf is not} an equivariant compactification of $\Ga^3$.
\end{lemma}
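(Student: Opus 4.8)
The plan is to recognise $\II_{32}$ as a flag manifold and then invoke Proposition~\ref{prop:flag varieties}. First I would recall, from Kishimoto's classification (Theorem~\ref{theo:kishimoto}) or directly from \cite[Table~2]{MM81}, that $\II_{32}$ is a smooth divisor of bidegree $(1,1)$ on $\mathbf{P}^2\times\mathbf{P}^2$, equivalently the projectivized tangent bundle $\mathbf{P}(T_{\mathbf{P}^2})$.

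Next I would note that such a divisor is unique up to isomorphism: a bidegree $(1,1)$ hypersurface of $\mathbf{P}^2\times\mathbf{P}^2$ is the zero locus of a bilinear form on $\mathbf{C}^3$, it is smooth precisely when this form is nondegenerate, and in that case $\operatorname{GL}_3(\mathbf{C})\times\operatorname{GL}_3(\mathbf{C})$ carries it to the standard pairing $\sum_{i=0}^2 x_iy_i=0$. Hence $\II_{32}$ is exactly the incidence variety of a point lying on a line in $\mathbf{P}^2$, that is, the three-dimensional flag variety $\mathbf{Fl}(1,1)$ of \S\ref{section:flags} (which is also $\operatorname{GL}_3(\mathbf{C})/P \cong \mathbf{P}(T_{\mathbf{P}^2})$). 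It then remains only to apply Proposition~\ref{prop:flag varieties} with $n=2$: the flag variety $\mathbf{Fl}(1,1)$ is not an equivariant compactification of $\Ga^{3}$, hence neither is $\II_{32}$.

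The argument is immediate once Proposition~\ref{prop:flag varieties} is available, so there is no serious obstacle; the only point requiring a moment's care is the reduction of the Mori--Mukai family $\II_{32}$ to the single variety $\mathbf{P}(T_{\mathbf{P}^2})$, and even this is already part of the description of the family in \cite{MM81}. In other words, the whole content of this lemma is contained in the non-additivity of the flag variety established earlier via Arzhantsev's criterion.
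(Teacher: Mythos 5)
Your proposal is correct and coincides with the paper's own argument: identify $\II_{32}$ with the smooth $(1,1)$-divisor in $\mathbf{P}^2\times\mathbf{P}^2$, i.e.\ the flag variety $\mathbf{Fl}(1,1)\cong\mathbf{P}(T_{\mathbf{P}^2})$, and apply Proposition~\ref{prop:flag varieties} with $n=2$. The extra remark on the uniqueness of the smooth bidegree-$(1,1)$ hypersurface up to isomorphism is a harmless (and accurate) elaboration of what the paper takes from \cite{MM81}.
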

\begin{proof}
 The Fano threefold $\II_{32}$ corresponds to a divisor on $\mathbf{P}^2 \times \mathbf{P}^2$ of bidegree $(1,1)$ or, equivalently, the projective bundle $\mathbf{P}(T_{\mathbf{P}^2})$. The conclusion follows from Proposition \ref{prop:flag varieties} above by considering $n=2$.
\end{proof}

\begin{lemma}
 The Fano threefolds $\II_{23.(b)}$ and $\II_{29}$ {\bf are not} equivariant compactifications of $\Ga^3$. The Fano threefold $\II_{31}$ {\bf is} an equivariant compactification of $\Ga^3$.
\end{lemma}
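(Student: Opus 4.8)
The plan is to reduce all three assertions, via the equivariant blow-up correspondence of Corollary~\ref{co:equivariant blow-up}, to the unique $\Ga^3$-structure on $\mathcal{Q}_3$ from Theorem~\ref{theo:smooth quadric}, whose invariant curves are described in Corollary~\ref{cor:smooth quartic 3fold}. For the non-existence part, recall that a member of $\II_{29}$ is a blow-up $\operatorname{Bl}_Z\mathcal{Q}_3$ of the smooth quadric threefold along a smooth conic $Z$, while a member of $\II_{23.(b)}$ is $\operatorname{Bl}_Z\mathcal{Q}_3$ with $Z=\mathcal{Q}_3\cap A\cap B$ a smooth curve of degree $4$ ($A$ a singular hyperplane section, $B$ a quadric section). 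If such a blow-up were an equivariant compactification of $\Ga^3$, then by Corollary~\ref{co:equivariant blow-up} the action would descend to $\mathcal{Q}_3$ leaving $Z$ invariant, and by Theorem~\ref{theo:smooth quadric} this descended structure is the unique one, with dense orbit $\cong\mathbf{A}^3$ and boundary the singular hyperplane section $\mathcal{Q}_0^2$. I would first observe that every $\Ga^3$-invariant curve on $\mathcal{Q}_3$ is contained in $\mathcal{Q}_0^2$: the dense orbit is a single orbit, hence contains no proper invariant subvariety, so an invariant curve can meet it only in finitely many points. Since $\Ga^3$ is connected it fixes each irreducible component of $Z$, and Corollary~\ref{cor:smooth quartic 3fold} then forces each component to be a line of the distinguished ruling through the singular point $p$ of $\mathcal{Q}_0^2$. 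This is absurd: a smooth conic is irreducible of degree $2$, not a line, and a smooth curve of degree $4$ cannot be a union of lines through $p$, since two distinct such lines already meet at $p$ and destroy smoothness. Hence neither family is additive.

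For the existence statement I would run the correspondence in reverse: by Corollary~\ref{cor:smooth quartic 3fold} the unique $\Ga^3$-structure on $\mathcal{Q}_3$ does admit invariant lines, namely the lines of the distinguished ruling of $\mathcal{Q}_0^2$ through its singular point. Choosing such a line $L$, Corollary~\ref{co:equivariant blow-up} lifts the $\Ga^3$-structure to $\operatorname{Bl}_L\mathcal{Q}_3$. Finally, since $\operatorname{Aut}(\mathcal{Q}_3)=\operatorname{PO}_5(\mathbf{C})$ acts transitively on the lines contained in $\mathcal{Q}_3$, the threefold $\operatorname{Bl}_L\mathcal{Q}_3$ is, up to isomorphism, independent of $L$ and is by definition the Fano threefold $\II_{31}$; thus $\II_{31}$ is an equivariant compactification of $\Ga^3$.

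The step I expect to be the main obstacle is the geometric input for $\II_{23.(b)}$: one must be certain that the blow-up centre is not a line and cannot degenerate, within the family, to a union of concurrent lines compatible with smoothness of the total space. This requires identifying the centre as a degree-$4$ curve cut out on the singular quadric surface $A$ by a second quadric, together with the remark that two concurrent lines already obstruct smoothness of a blow-up of a smooth threefold. Everything else is a formal application of Corollaries~\ref{co:equivariant blow-up} and~\ref{cor:smooth quartic 3fold} and Theorem~\ref{theo:smooth quadric}.
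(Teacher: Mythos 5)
Your proof is correct and follows essentially the same route as the paper: reduce via Corollary~\ref{co:equivariant blow-up} to the unique $\Ga^3$-structure on $\mathcal{Q}_3$, use Corollary~\ref{cor:smooth quartic 3fold} to see that the only invariant curves are lines of the distinguished ruling of $\mathcal{Q}_0^2$ through its singular point, and conclude that among the three blow-up centres only the line (giving $\II_{31}$) can be invariant. The extra details you supply (irreducibility of the conic, the quartic not being a union of concurrent lines, transitivity of $\operatorname{Aut}(\mathcal{Q}_3)$ on lines) are correct elaborations of what the paper leaves implicit.
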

\begin{proof}
 All these varieties are obtained as the blow-up of the smooth quadric $\mathcal{Q}_3 \subseteq \mathbf{P}^4$ along a curve (see Mori and Mukai classification \cite[Table 2]{MM81}). As pointed out in Corollary \ref{cor:smooth quartic 3fold}, the only curves which are invariant by the unique action in $\mathcal{Q}_3 \subseteq \mathbf{P}^4$ are lines contained on the boundary divisor $\mathcal{Q}_0^2 \subseteq \mathcal{Q}_3$ corresponding the ruling passing through the singular point of $\mathcal{Q}_0^2$. The conclusion follows from Corollary \ref{co:equivariant blow-up} above since $\II_{31}$ is the only variety among these three that is obtained as the blow-up of an invariant curve (i.e. a line passing through the singular point of the boundary divisor).
\end{proof}

\begin{lemma}
 The Fano threefold $\II_{24}$ {\bf is not} an equivariant compactification of $\Ga^3$.
\end{lemma}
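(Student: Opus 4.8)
The plan is to exploit the conic bundle structure that every member of the family carries. By definition (see Theorem~\ref{theo:kishimoto}) $X=\II_{24}$ is a smooth divisor of bidegree $(1,2)$ in $\mathbf{P}^2\times\mathbf{P}^2$, so we may write its equation as $F(x,y)=\sum_{i=0}^2 x_iQ_i(y)$ with $Q_0,Q_1,Q_2\in H^0(\mathbf{P}^2,\mathcal{O}(2))$, and denote by $M_i$ the Gram matrix of $Q_i$. The first projection $p\colon X\to\mathbf{P}^2$ has as fibre over $[x]$ the conic $C_x=\{\sum_ix_iQ_i=0\}$, hence is a conic bundle whose discriminant $\Delta=\{[x]\ :\ C_x\ \text{is singular}\}$ is cut out by the cubic form $\delta(x)=\det(\sum_ix_iM_i)$. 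First I would record two facts about $\Delta$. (i) $\delta\not\equiv 0$: otherwise every conic in the net $\langle Q_0,Q_1,Q_2\rangle$ is degenerate, and the classification of such nets shows that, after a linear change of coordinates, either $\langle Q_i\rangle=y_0\cdot\langle y_0,y_1,y_2\rangle$, so that $F$ factors and $X$ is reducible, or all the $Q_i$ are singular at a common point $v$, so that $\mathbf{P}^2\times\{v\}\subseteq\Sing(X)$ — both impossible. Thus $\Delta$ is a genuine plane cubic. (ii) $\Delta$ is \emph{reduced}: since $X$ and $\mathbf{P}^2$ are smooth and $\rho(X/\mathbf{P}^2)=1$, the conic bundle $p$ is standard, for which the discriminant is a reduced curve (equivalently, restricting $p$ over a general line $M\subset\mathbf{P}^2$ gives a conic bundle on the smooth surface $p^{-1}(M)$, whose discriminant on $M\cong\mathbf{P}^1$ must be reduced).

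Suppose now, for a contradiction, that $X$ is an equivariant compactification of $\Ga^3$, with open orbit $U\cong\mathbf{A}^3$. Since $p$ has connected fibres and $\mathbf{P}^2$ is normal, $p_*\mathcal{O}_X=\mathcal{O}_{\mathbf{P}^2}$, so Blanchard's lemma (Theorem~\ref{theo:Blanchard}) makes the $\Ga^3$-action descend to $\mathbf{P}^2$ with $p$ equivariant. The image $p(U)$ is then a dense $\Ga^3$-orbit, so the action on $\mathbf{P}^2$ is generically transitive, and by Proposition~\ref{prop:Gastructureonproj} it factors through a quotient $\Ga^3\twoheadrightarrow\Ga^2$ realising one of the two additive structures of Proposition~\ref{prop:P2}. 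By Proposition~\ref{prop:boundary divisors} (using $\rho(\mathbf{P}^2)=1$) the complement of the open orbit is a single line $L$, and since $\Ga^2$ acts transitively on that orbit, the only irreducible $\Ga^3$-invariant curve in $\mathbf{P}^2$ is $L$.

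To conclude, note that $\Delta$ is intrinsic to $p$, being the image of the locus where $p$ fails to be smooth; hence $\Delta$ is $\Ga^3$-invariant, so each of its irreducible components equals $L$, which forces $\Delta=3L$ as a divisor — contradicting fact (ii). Therefore $X$ admits no $\Ga^3$-structure. The only non-formal ingredient is the control of the degenerate locus $\Delta$ in the first paragraph — in particular the reducedness of $\Delta$, i.e. the exclusion of the triple-line degeneration $\delta=c\,x_0^3$; I expect this to be the main obstacle. If one wishes to avoid the (standard) conic-bundle input, that identity can be ruled out directly, by combining the fact that $\langle M_1,M_2\rangle$ would then lie in the symmetric determinantal cubic with the vanishing of the lower-order coefficients of $\delta$ in $x_0$, which again produces a singular point of $X$; the rest of the argument is a routine application of Blanchard's lemma and Proposition~\ref{prop:P2}.
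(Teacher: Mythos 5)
Your proof is correct, but it takes a genuinely different route from the paper's. The paper argues purely with divisor classes: writing $-K_X=2H_1+H_2$ by adjunction on the $(1,2)$-divisor, it invokes \cite[Lemma 2.2]{Kis05} to conclude that the boundary $\Delta=D_1+D_2$ of a hypothetical $\Ga^3$-compactification is ample (as $\II_{24}$ admits no divisorial contraction), then the computation in the proof of \cite[Lemma 5.6]{Kis05} to get $D_2\sim H_1$ and $D_1\sim aH_1+H_2$, so that $-K_X=D_1+(2-a)D_2$; since the $[D_i]$ freely generate $\Pic(X)$, the coefficient $1$ of $D_1$ violates Theorem \ref{theo:canonical divisor}, which forces every coefficient to be at least $2$. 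You instead descend the action along the conic bundle via Blanchard's lemma (Theorem \ref{theo:Blanchard}) and play the $\Ga^3$-invariance of the discriminant cubic against its reducedness; this argument is also complete. The two classical inputs you flag as possible obstacles are indeed standard: a flat conic bundle with smooth total space has reduced discriminant divisor (Sarkisov/Beauville, or your restriction-to-a-general-line argument), and a net of everywhere-singular conics has either a fixed component or a fixed singular point; your smoothness computations ruling out $\delta\equiv 0$, and the deduction that every irreducible invariant curve in $\mathbf{P}^2$ equals the boundary line $L$ (via Propositions \ref{prop:Gastructureonproj}, \ref{prop:P2} and \ref{prop:boundary divisors}), are correct, whence $\Delta=3L$ is forced and contradicts reducedness. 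What the paper's route buys is brevity and uniformity with the rest of its treatment of the $B_2=2$ case, since it reuses Kishimoto's setup wholesale; what yours buys is independence from the auxiliary lemmas of \cite{Kis05} and a concrete geometric reason for the obstruction, namely that an additive structure would force the degree-three discriminant of the conic bundle to degenerate to a triple line.
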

\begin{proof}
 A member $X$ of the family $\II_{24}$ corresponds to a divisor on $\mathbf{P}^2 \times \mathbf{P}^2$ of bidegree $(1,2)$. The Fano threefold $X$ admits therefore two different extremal contractions of fiber type $\varphi_i: X \to \mathbf{P}^2$ ($i=1,2$), which are induced by the projections onto each of the factors of $\mathbf{P}^2 \times \mathbf{P}^2$. Let us assume that $X$ is an equivariant compactification of $\Ga^3$ with boundary divisors $D_1,D_2$ and let $\Delta:=D_1+D_2$. Since $X$ has no divisorial contractions it follows from \cite[Lemma 2.2]{Kis05} that $\Delta$ is ample.
 
 On one hand, it follows from the adjunction formula that $-K_X = 2H_1+H_2$, where $H_1=c_1(\varphi_1^*\mathcal{O}_{\mathbf{P}^2}(1))$ and $H_2=c_1(\varphi_2^*\mathcal{O}_{\mathbf{P}^2}(1))$. On the other hand, since $\Delta$ is ample it follows from the proof of \cite[Lemma 5.6]{Kis05} that $D_2\sim H_1$ and $D_1 \sim aH_1+H_2$ with $a\in \mathbf{Z}$. Therefore we can write $-K_X=D_1+(2-a)D_2$ with $a\in \mathbf{Z}$, which contradicts Theorem \ref{theo:canonical divisor}.
\end{proof}

\begin{lemma}
 The Fano threefold $\II_{27}$ {\bf is not} an equivariant compactification of $\Ga^3$. The Fano threefolds $\II_{28}$ and $\II_{30}$ {\bf are} equivariant compactifications of $\Ga^3$.
\end{lemma}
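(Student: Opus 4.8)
The plan is to run all three assertions through Corollary~\ref{co:equivariant blow-up}, which turns each of them into a question about invariant subvarieties inside $\mathbf{P}^3$ (equipped with the structures of Corollary~\ref{coro:P3}) or inside $\mathcal{Q}_3$ (equipped with the structure of Theorem~\ref{theo:smooth quadric}).

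For $\II_{27}$ I would argue by contradiction. A member is $\operatorname{Bl}_C\mathbf{P}^3$ with $C$ a twisted cubic, so if it were additive, then by Corollary~\ref{co:equivariant blow-up} the threefold $\mathbf{P}^3$ would carry a $\Ga^3$-structure under which $C$ is invariant. Since $\Ga^3$ acts simply transitively on its open orbit $U\cong\mathbf{A}^3$, an invariant curve must be disjoint from $U$ and hence lie in the boundary $\mathbf{P}^3\setminus U$; by Proposition~\ref{prop:boundary divisors} this boundary is a single irreducible divisor generating $\operatorname{Cl}(\mathbf{P}^3)\cong\mathbf{Z}$, i.e.\ a hyperplane. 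But a twisted cubic is non-degenerate, so it cannot lie in a hyperplane -- a contradiction. I expect this incompatibility (an invariant curve in $\mathbf{P}^3$ is forced to be degenerate, whereas twisted cubics are not) to be the conceptual crux; everything else reduces to exhibiting the right structure.

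For $\II_{28}$ I would use the structure $\rho_4$ of Corollary~\ref{coro:P3}, whose fixed locus is a plane $P$ (which must coincide with the boundary hyperplane, the open orbit being free). Any smooth plane cubic $C\subseteq P$ is then pointwise fixed, in particular $\Ga^3$-invariant, so Corollary~\ref{co:equivariant blow-up} endows $\operatorname{Bl}_C\mathbf{P}^3\cong\II_{28}$ with a $\Ga^3$-structure. For $\II_{30}$ I would use the description $\II_{30}\cong\operatorname{Bl}_p\mathcal{Q}_3$: by Theorem~\ref{theo:smooth quadric} the unique $\Ga^3$-structure on $\mathcal{Q}_3$ has $[0:\ldots:0:1]$ as its single fixed point, so taking $p$ to be that point and applying Corollary~\ref{co:equivariant blow-up} again produces a $\Ga^3$-structure on $\II_{30}$ (alternatively, realize $\II_{30}$ as $\operatorname{Bl}_C\mathbf{P}^3$ for $C$ a smooth conic contained in the fixed plane of $\rho_4$). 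Finally, up to a change of coordinates the center can always be arranged in the required position, so in fact every member of the families $\II_{28}$ and $\II_{30}$ admits a $\Ga^3$-structure, which also settles the converse for them.
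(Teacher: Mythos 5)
Your proposal is correct and follows essentially the same route as the paper: reduce via Corollary~\ref{co:equivariant blow-up} to finding (or ruling out) invariant curves, observe that any $\Ga^3$-invariant curve in $\mathbf{P}^3$ must lie in the boundary hyperplane so a twisted cubic cannot be invariant, and exhibit the plane cubic and the conic as invariant centers (the paper uses the planar description for $\II_{30}$, while your $\mathcal{Q}_3$-blow-up alternative is the equivalent description it mentions elsewhere). Your explicit justification that invariant curves are planar, via Proposition~\ref{prop:boundary divisors} and freeness of the action on the open orbit, is a slightly more self-contained version of the paper's appeal to Corollary~\ref{coro:P3}.
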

\begin{proof}
 All these varieties are obtained as the blow-up of $\mathbf{P}^3$ along a curve (see Mori and Mukai classification \cite[Table 2]{MM81}). It follows from Corollary \ref{coro:P3} that the only curves that are invariant for some of the four possible $\Ga^3-$structures in $\mathbf{P}^3$ are curves inside a plane $\mathbf{P}^2\subseteq \mathbf{P}^3$. Whereas the variety $\II_{27}$ is obtained by blowing-up a twisted cubic, the varieties $\II_{28}$ and $\II_{30}$ are obtained by blowing-up planar curves in $\mathbf{P}^3$. The conclusion follows from Corollary \ref{co:equivariant blow-up}.
\end{proof}

\begin{lemma}
 The toric Fano threefolds $\II_{33}, \II_{34}, \II_{35}$ and $\II_{36}$ {\bf are} equivariant compactifications of $\Ga^3$.
\end{lemma}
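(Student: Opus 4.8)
The plan is to reduce everything to the toric classification already established. All four varieties $\II_{33}$, $\II_{34}$, $\II_{35}$, $\II_{36}$ are smooth toric Fano threefolds — they are precisely items (2)--(5) of Theorem \ref{theo:toric fano 3folds} — and all four already appear in the list of Proposition \ref{prop:toric case} of smooth toric Fano threefolds admitting a $\Ga^3$-structure. So strictly speaking the proof consists of a single sentence invoking Proposition \ref{prop:toric case}.

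For the reader's benefit I would also spell out the concrete source of the $\Ga^3$-structure in each case, none of which needs the full force of Proposition \ref{prop:toric case}. First, $\II_{34}\cong \mathbf{P}^1\times\mathbf{P}^2$ is additive by Lemma \ref{lemma:product actions} (in fact it carries two non-isomorphic $\Ga^3$-structures). Next, $\II_{33}$ is the blow-up of $\mathbf{P}^3$ along a line: taking on $\mathbf{P}^3$ the structure $\rho_2$ of Corollary \ref{coro:P3}, whose fixed locus is a line $\ell$, the group $\Ga^3$ acts on $\ell$, so Corollary \ref{co:equivariant blow-up} endows $\operatorname{Bl}_\ell\mathbf{P}^3\cong \II_{33}$ with a $\Ga^3$-structure. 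Similarly $\II_{35}$ is the blow-up of $\mathbf{P}^3$ at a point: the structure $\rho_1$ (or $\rho_3$) of Corollary \ref{coro:P3} has an isolated fixed point $p$, and Corollary \ref{co:equivariant blow-up} produces a $\Ga^3$-structure on $\operatorname{Bl}_p\mathbf{P}^3\cong \II_{35}$.

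Finally, $\II_{36}\cong \mathbf{P}(\mathcal{O}_{\mathbf{P}^2}\oplus \mathcal{O}_{\mathbf{P}^2}(2))$ is neither a product nor an (equivariant) blow-up of $\mathbf{P}^3$, so for this one I would appeal directly to the Arzhantsev--Romaskevich criterion of Theorem \ref{theo:additive toric varieties}, checking that the anticanonical polytope of $\mathbf{P}(\mathcal{O}_{\mathbf{P}^2}\oplus \mathcal{O}_{\mathbf{P}^2}(2))$ is inscribed in a rectangle; this is the computation recorded in Appendix \ref{appendix} and subsumed in Proposition \ref{prop:toric case}. I do not expect any genuine obstacle: the entire content of the lemma is already contained in Proposition \ref{prop:toric case}, and the only point requiring a little care is $\II_{36}$, where one must fall back on the combinatorial toric criterion rather than on an explicit geometric description via blow-ups or products.
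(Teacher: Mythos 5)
Your proposal is correct and matches the paper, whose entire proof is the single sentence ``This is a particular case of Proposition \ref{prop:toric case}''; the supplementary case-by-case constructions you give (Lemma \ref{lemma:product actions} for $\II_{34}$, Corollary \ref{coro:P3} plus Corollary \ref{co:equivariant blow-up} for $\II_{33}$ and $\II_{35}$) are consistent with how the paper itself justifies Proposition \ref{prop:toric case}. One small inaccuracy: the polytope computation for $\II_{36}$ is \emph{not} recorded in Appendix \ref{appendix} (which only treats $\III_{29}$, $\IV_{11}$ and $\IV_{12}$); in the paper $\II_{36}$ is instead obtained by Blanchard's lemma as a blow-down of an additive threefold of higher Picard number, though your suggested direct check of the anticanonical polytope would of course also work.
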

\begin{proof}
 This is a particular case of Proposition \ref{prop:toric case}.
\end{proof}

Our analysis is summarized in the following list.

\begin{propo}\label{prop:B_2=2}
Let $X$ be a smooth Fano threefold with $B_2(X)=2$. Suppose that $X$ admits a $\Ga^3$-structure. Then $X$ is isomorphic to one of the following.
\begin{enumerate}
 \item $\II_{28}$, the blow-up of $\mathbf{P}^3$ along a plane cubic.
 \item $\II_{30}$, the blow-up of $\mathbf{P}^3$ along a conic or, equivalently, the blow-up of $\mathcal{Q}_3 \subseteq \mathbf{P}^4$ at a point $p$.
 \item $\II_{31}$, the blow-up of $\mathcal{Q}_3 \subseteq \mathbf{P}^4$ along a line.
 \item $\II_{33}$, the blow-up of $\mathbf{P}^3$ along a line or, equivalently, the projective bundle $\mathbf{P}(\mathcal{O}_{\mathbf{P}^1}^{\oplus 2}\oplus \mathcal{O}_{\mathbf{P}^1}(1))$.
 \item $\II_{34} \cong \mathbf{P}^1 \times \mathbf{P}^2$.
 \item $\II_{35}$, the blow-up of $\mathbf{P}^3$ at a point $p$ or, equivalently, the projective bundle $\mathbf{P}(\mathcal{O}_{\mathbf{P}^2}\oplus \mathcal{O}_{\mathbf{P}^2}(1))$.
 \item $\II_{36}\cong \mathbf{P}(\mathcal{O}_{\mathbf{P}^2}\oplus \mathcal{O}_{\mathbf{P}^2}(2))$.
\end{enumerate}
Conversely, every Fano threefold in the above list admits a $\Ga^3$-structure.
\end{propo}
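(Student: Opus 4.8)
The plan is to assemble the sequence of lemmas established just above, which together sort the Mori--Mukai list into additive and non-additive cases. I would start by recalling that, by \cite[Table 2]{MM81}, there are exactly $36$ deformation families of smooth Fano threefolds with $B_2 = 2$. By Theorem \ref{theo:canonical divisor}, any equivariant compactification of $\Ga^3$ of Picard number two has non-nef log canonical divisor $K_X + D_1 + D_2$; hence Kishimoto's classification (Theorem \ref{theo:kishimoto}) immediately cuts the list down to the sixteen candidate families $\II_{14}$, $\II_{20}$, $\II_{22}$, $\II_{23.(b)}$, $\II_{24}$, $\II_{26}$, $\II_{27}$, $\II_{28}$, $\II_{29}$, $\II_{30}$, $\II_{31}$, $\II_{32}$, $\II_{33}$, $\II_{34}$, $\II_{35}$, $\II_{36}$, the remaining twenty families having already been excluded in the first of the preceding lemmas.

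Next I would eliminate the nine non-additive families among these sixteen, using the four mechanisms already isolated above. First, $\II_{14}, \II_{20}, \II_{22}$ and $\II_{26}$ are blow-ups of $V_5$ along curves, so Blanchard's lemma (Theorem \ref{theo:Blanchard}) together with the non-additivity of $V_5$ (Theorem \ref{theo:HT}) excludes them. Second, $\II_{23.(b)}$ and $\II_{29}$ are blow-ups of $\mathcal{Q}_3 \subseteq \mathbf{P}^4$ along curves that are not lines through the singular point of the boundary divisor $\mathcal{Q}_0^2$, and $\II_{27}$ is the blow-up of $\mathbf{P}^3$ along a twisted cubic, which lies in no plane; in each case Corollary \ref{co:equivariant blow-up} together with the description of invariant curves in Corollary \ref{cor:smooth quartic 3fold} (respectively Corollary \ref{coro:P3}) rules them out. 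Third, $\II_{32} \cong \mathbf{P}(T_{\mathbf{P}^2})$ is excluded by the flag-variety criterion, Proposition \ref{prop:flag varieties} with $n = 2$. Fourth, $\II_{24}$, a divisor of bidegree $(1,2)$ on $\mathbf{P}^2 \times \mathbf{P}^2$, is excluded by pinning down the boundary classes $D_2 \sim H_1$ and $D_1 \sim a H_1 + H_2$ and observing that then the coefficient of $D_1$ in $-K_X = 2H_1 + H_2$ would be $1$, contradicting Theorem \ref{theo:canonical divisor}.

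It then remains to confirm additivity of the seven surviving families. The toric ones $\II_{33}, \II_{34}, \II_{35}, \II_{36}$ are covered by Proposition \ref{prop:toric case}. For $\II_{31}$, the blow-up of $\mathcal{Q}_3$ along a line through the singular point of $\mathcal{Q}_0^2$ --- which is an invariant curve for the unique $\Ga^3$-structure on $\mathcal{Q}_3$ by Corollary \ref{cor:smooth quartic 3fold} --- Corollary \ref{co:equivariant blow-up} produces a $\Ga^3$-structure. Likewise $\II_{28}$ and $\II_{30}$ are the blow-ups of $\mathbf{P}^3$ along a plane cubic and along a conic respectively; both centers are planar, hence invariant for a suitable $\Ga^3$-structure on $\mathbf{P}^3$ (Corollary \ref{coro:P3}), and Corollary \ref{co:equivariant blow-up} again applies. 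Collecting these statements yields both the list and its converse.

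The bookkeeping is routine; the genuinely delicate points are two. One is ensuring that ``the blow-up center is not among the invariant curves $\Rightarrow$ the blow-up is not additive'' is a legitimate implication: this is precisely what the \emph{one-to-one} correspondence of Corollary \ref{co:equivariant blow-up}, combined with the \emph{exhaustiveness} of the invariant-curve descriptions in Corollary \ref{coro:P3} and Corollary \ref{cor:smooth quartic 3fold}, is there to guarantee, and one must check that every planar curve of the relevant degree really can be made invariant by choosing among the four structures on $\mathbf{P}^3$. The other is the $\II_{24}$ case, where one needs the divisor classes of the boundary components computed precisely enough (via Kishimoto's auxiliary lemmas in \cite{Kis05}) to force the offending coefficient $1$ in $-K_X$.
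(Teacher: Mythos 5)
Your proposal is correct and follows essentially the same route as the paper: Kishimoto's classification combined with Theorem \ref{theo:canonical divisor} to reduce to sixteen candidate families, then the same four exclusion mechanisms (Blanchard's lemma applied to blow-ups of $V_5$, the invariant-curve descriptions for $\mathbf{P}^3$ and $\mathcal{Q}_3$ via Corollary \ref{co:equivariant blow-up}, the flag-variety criterion for $\II_{32}$, and the boundary divisor class computation for $\II_{24}$), with the toric cases handled by Proposition \ref{prop:toric case}. The delicate points you flag at the end are exactly the ones the paper's lemmas are organized around.
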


\subsection{Fano threefolds with $B_2=3$} All the 31 families of smooth Fano threefolds with Picard number 3 were listed by Mori and Mukai in \cite[Table 3]{MM81}. In this section we prove that only 7 among these 31 families have members which are equivariant compactifications of $\Ga^3$ (see Proposition \ref{prop:B_2=3} below).

\begin{lemma}
	The primitive Fano threefold $\III_{1}$ {\bf is not} an equivariant compactification of $\Ga^3$.
\end{lemma}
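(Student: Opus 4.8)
The plan is to pin down the anticanonical class and the pseudoeffective cone of a member $X$ of $\III_1$ explicitly, and then to contradict Theorem~\ref{theo:canonical divisor}. Recall from \cite{MM81} that a general member of $\III_1$ is the double cover $\pi\colon X\to W:=\mathbf{P}^1\times\mathbf{P}^1\times\mathbf{P}^1$ branched along a smooth divisor $B\in\left|\mathcal{O}_W(2,2,2)\right|$, and that $\Pic(X)=\pi^{*}\Pic(W)=\mathbf{Z}F_1\oplus\mathbf{Z}F_2\oplus\mathbf{Z}F_3$, where $F_i=\pi^{*}\mathrm{pr}_i^{*}\mathcal{O}_{\mathbf{P}^1}(1)$ and $\mathrm{pr}_i\colon W\to\mathbf{P}^1$ is the $i$-th projection. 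First I would record, via the standard formula for double covers, that $-K_X=\pi^{*}\!\bigl(-K_W-\tfrac12 B\bigr)=\pi^{*}\mathcal{O}_W(1,1,1)=F_1+F_2+F_3$; in particular $F_1,F_2,F_3$ is a basis of $\Pic(X)$ and $-K_X$ is a primitive class (equivalently, the Fano index of $\III_1$ equals $1$).

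Next I would exhibit the three conic bundle structures on $X$: for each $i$, composing $\pi$ with the projection $W\to\mathbf{P}^1\times\mathbf{P}^1$ omitting the $i$-th factor gives a fibre-type contraction $\varphi_i\colon X\to\mathbf{P}^1\times\mathbf{P}^1$ whose general fibre $m_i$ is the double cover of a line in the $i$-th direction. A direct intersection computation yields $F_j\cdot m_i=2\,\delta_{ij}$, and, since the curves of class $m_i$ sweep out $X$, the class $m_i$ is movable, so $D\cdot m_i\geq 0$ for every effective divisor $D$. Now assume $X$ is an equivariant compactification of $\Ga^3$ with boundary divisors $D_1,D_2,D_3$. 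By Proposition~\ref{prop:boundary divisors}, $\{[D_1],[D_2],[D_3]\}$ is a $\mathbf{Z}$-basis of $\Pic(X)$ and $\Eff(X)=\sum_i\mathbf{R}_{\geq 0}[D_i]$. Writing an arbitrary effective $D=\sum_j x_jF_j$, the inequalities $0\leq D\cdot m_i=2x_i$ force $x_i\geq 0$, whence $\Eff(X)=\sum_i\mathbf{R}_{\geq 0}F_i$ as well. Thus the simplicial cone $\Eff(X)$ has its three extremal rays generated both by the $[D_i]$ and by the $F_i$; since each $[D_i]$ (being part of a lattice basis) and each $F_i$ (by the description of $\Pic(X)$) is a primitive lattice point on its ray, we get $[D_i]=F_i$ after relabelling.

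Finally, Theorem~\ref{theo:canonical divisor} gives $-K_X=\sum_i a_i[D_i]=\sum_i a_iF_i$ with $a_i\geq 2$, which contradicts $-K_X=F_1+F_2+F_3$ because $F_1,F_2,F_3$ are linearly independent; hence no member of $\III_1$ is additive. The only input that needs to be handled carefully is the identification $\Pic(X)=\pi^{*}\Pic(W)$ together with the primitivity of the $F_i$ (classical for these double covers, and part of the Mori--Mukai description of $\III_1$). If one wishes to avoid invoking it directly, I would run the more robust variant: from $[D_i]=t_iF_i$ with $t_i>0$ real one gets $a_it_i=1$, so $t_i\leq\tfrac12$, while integrality of $D_i\cdot m_i=2t_i$ forces $t_i\in\tfrac12\mathbf{Z}$, hence $t_i=\tfrac12$ and $-K_X=2(D_1+D_2+D_3)$; this is impossible since the Fano index of $\III_1$ is $1$.
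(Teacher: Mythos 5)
Your argument is correct, but it is genuinely different from the one in the paper. The paper disposes of $\III_{1}$ in one line: by \cite[p.~71]{AB92} (see also \cite[\S 12.4]{IP99}) this threefold is not rational, hence cannot even contain $\mathbf{A}^3$ as an open subset, let alone be an equivariant compactification of $\Ga^3$. You instead run a numerical argument entirely inside the paper's own toolkit: using the double-cover description $\pi\colon X\to\mathbf{P}^1\times\mathbf{P}^1\times\mathbf{P}^1$ you compute $-K_X=F_1+F_2+F_3$, identify $\Eff(X)=\sum_i\mathbf{R}_{\geq 0}F_i$ via the three conic-bundle structures and the covering curves $m_i$ with $F_j\cdot m_i=2\delta_{ij}$, match the extremal rays with the boundary classes $[D_i]$ from Proposition \ref{prop:boundary divisors}, and contradict the bound $a_i\geq 2$ of Theorem \ref{theo:canonical divisor}; your fallback via $D_i\cdot m_i=2t_i\in\mathbf{Z}$ and the Fano index ($8\nmid(-K_X)^3=12$) cleanly sidesteps the only delicate input, the integral identification $\Pic(X)=\pi^*\Pic(W)$. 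The trade-off: the paper's route is shorter but imports a nontrivial non-rationality theorem as a black box, whereas yours is elementary and self-contained given the Mori--Mukai description, is in the same spirit as the paper's treatment of $\II_{24}$ and $\III_{2}$, and applies uniformly to every smooth member of the family without any genericity or rationality caveat. Both proofs are valid; yours could serve as a replacement if one wished to avoid the rationality literature.
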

\begin{proof}
	It follows from \cite[p. 71]{AB92} that $\III_{1}$ is not rational, hence it can not be the equivariant compactification of $\Ga^3$. See also \cite[$\S$12.4]{IP99}.
\end{proof}

\begin{lemma}
	The primitive Fano threefold $\III_{2}$ {\bf is not} an equivariant compactification of $\Ga^3$.
\end{lemma}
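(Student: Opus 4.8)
The threefold $\III_2$ is primitive, so---unlike most families treated above---a hypothetical $\Ga^3$-structure on it cannot be excluded by descending the action along a divisorial contraction to a smaller smooth Fano threefold via Corollary~\ref{co:equivariant blow-up}. The plan is instead to argue directly with the numerical data of $\III_2$, following the strategy used above for $\II_{24}$. So assume that $X$ is a member of $\III_2$ admitting a $\Ga^3$-structure with boundary $X\setminus\Ga^3=D_1\cup D_2\cup D_3$, and put $\Delta:=D_1+D_2+D_3$. By Proposition~\ref{prop:boundary divisors} the classes $[D_1],[D_2],[D_3]$ form a basis of $\Pic(X)$ with $\Eff(X)=\sum_{i=1}^{3}\mathbf{R}_{\geq 0}[D_i]$, and by Theorem~\ref{theo:canonical divisor} we have $-K_X=a_1[D_1]+a_2[D_2]+a_3[D_3]$ with $a_i\geq 2$ for all $i$.

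The extra ingredient is the ampleness of $\Delta$. From Mori and Mukai's description of $\III_2$ in \cite[Table~3]{MM81} one reads off its extremal rays; since $\III_2$ is primitive it has no divisorial contraction onto a curve, and inspecting the remaining extremal contractions one checks that $X$ has no divisorial contraction at all, so Kishimoto's \cite[Lemma~2.2]{Kis05} yields that $\Delta$ is ample. I would then compute $-K_X$ by adjunction in the geometric basis of $N^1(X)$ afforded by the (fiber type) extremal contractions of $\III_2$, and use that each $[D_i]$ spans an extremal ray of $\Eff(X)$ and that the equivariant MMP must respect these rays (as in the proof of \cite[Lemma~5.6]{Kis05}) to pin down the possible numerical classes of $D_1,D_2,D_3$. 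This reduces the statement to a finite arithmetic verification; exactly as for $\II_{24}$, where one is led to $-K_X=D_1+(2-a)D_2$, I expect the conclusion to be that $-K_X$ must be written with some coefficient $a_i<2$---equivalently, that $\Delta$ cannot be ample---contradicting Theorem~\ref{theo:canonical divisor}.

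The hard part is precisely this last computation: determining the effective and nef cones of $\III_2$ and the classes of its covering families of curves accurately enough, and then running the (possibly case-split) arithmetic so as to exclude every admissible triple $(a_1,a_2,a_3)$ with each $a_i\geq 2$. If it is more convenient to present $\III_2$ as a conic bundle $\pi\colon X\to S$ with $S\in\{\mathbf{P}^1\times\mathbf{P}^1,\mathbf{F}_1\}$ and non-empty discriminant, a shorter alternative becomes available: by Blanchard's lemma (Theorem~\ref{theo:Blanchard}) and Proposition~\ref{prop:Gastructureonproj} the action descends to a $\Ga^2$-structure on $S$, whose only irreducible invariant curves are the two boundary components of $S$, whereas the discriminant of $\pi$ is $\Ga^2$-invariant but not of that form, so $X$ cannot be additive.
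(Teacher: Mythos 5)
Your main line of argument has a genuine gap at both of its load-bearing steps. First, you make the ampleness of $\Delta$ rest on the unverified assertion that $\III_{2}$ ``has no divisorial contraction at all''; this is a nontrivial claim (the surface $X\cap Y\cong\mathbf{P}^1\times\mathbf{P}^1$ in Mori--Mukai's description is a natural candidate exceptional divisor that one must actually rule out), and the paper is structured precisely so as not to need it. The paper argues instead that \emph{if} $\Delta$ were not ample there would be an extremal ray $R$ with $\Delta\cdot R\leq 0$, whose contraction must be divisorial with exceptional locus one of the $D_i$; type $E1$ is then excluded \emph{using the additive structure} (the target would be an equivariant compactification of $\Ga^3$ whose boundary is not of pure codimension one, against Proposition \ref{prop:boundary divisors}), while types $E2$--$E5$ are excluded by Mori--Mukai's analysis of primitive Fano threefolds with $B_2=3$ in \cite{MM83}. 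Second, and more seriously, in the ample case you defer the entire content of the proof to a ``finite arithmetic verification'' that you only \emph{expect} to produce some $a_i<2$. Unlike $\II_{24}$, where \cite[Lemma 5.6]{Kis05} pins down the numerical classes of the boundary divisors, no such input is available for $\III_{2}$, so nothing in your sketch shows that the verification closes. The paper's route here is different and never determines the $D_i$: it computes $-K_X=\mathcal{O}_X(1)\otimes\rho^*\mathcal{O}_{\mathbf{P}^1\times\mathbf{P}^1}(1,2)$ by adjunction, observes that this class is \emph{primitive} in $\mathbf{NS}(X)$ so that $-K_X-2\Delta$ is effective and nonzero and hence $K_X+2\Delta$ is not nef, and then invokes Fujita's classification \cite{Fuj87} of polarized pairs with $K_X+2\Delta$ not nef to force $B_2(X)\leq 2$, a contradiction.

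By contrast, the ``shorter alternative'' you mention in your last sentence is essentially a correct proof, genuinely different from (and arguably simpler than) the paper's, and I would promote it from an afterthought to the main argument. By \cite{MM83}, $\III_{2}$ is a conic bundle $\pi\colon X\to\mathbf{P}^1\times\mathbf{P}^1$ with nontrivial discriminant. By Blanchard's lemma (Theorem \ref{theo:Blanchard}) the $\Ga^3$-action descends to the base and, being generically transitive there, factors through a $\Ga^2$-structure on $\mathbf{P}^1\times\mathbf{P}^1$; the only irreducible invariant curves are then the two boundary lines, of bidegrees $(1,0)$ and $(0,1)$ (Theorem \ref{theo:smooth quadric} with $n=2$). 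The discriminant locus is a proper closed invariant subset, hence contained in the union of these two lines. What your sketch still owes the reader is the class of the discriminant: from $X\in|\mathcal{O}_P(2)\otimes\pi^*\mathcal{O}(2,3)|$ with $E=\mathcal{O}\oplus\mathcal{O}(-1,-1)^{\oplus 2}$ one finds $\det(q)\in H^0\bigl((\det E)^{\otimes 2}\otimes\mathcal{O}(6,9)\bigr)=H^0(\mathcal{O}(2,5))$, and a reduced curve of bidegree $(2,5)$ (the discriminant of a conic bundle with smooth total space is reduced) cannot be supported on a $(1,0)$-line and a $(0,1)$-line. With that computation supplied, your alternative stands on its own.
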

\begin{proof}
	Let $\pi: P=\mathbf{P}(\mathcal{O}_{\mathbf{P}^1\times \mathbf{P}^1}\oplus \mathcal{O}_{\mathbf{P}^1\times \mathbf{P}^1}(-1,-1)^{\oplus 2}) \to \mathbf{P}^1\times \mathbf{P}^1$ with tautological line bundle $\mathcal{O}_{P}(1)$. The primitive Fano threefold $\III_{2}$ is given by a smooth member $X$ of the linear system $|\mathcal{O}_P(2)\otimes \pi^* \mathcal{O}_{\mathbf{P}^1\times \mathbf{P}^1}(2,3)|$ and it is endowed with a natural projection $\rho:X \to \mathbf{P}^1\times \mathbf{P}^1$ induced by $\pi$. 
	
	It is well known that if $E$ is a rank $r$ vector bundle over a smooth projective variety $Z$ and $\pi:\mathbf{P}(E)\to Z$, then we have the canonical bundle formula $\omega_{\mathbf{P}(E)} \cong \mathcal{O}_{\mathbf{P}(E)}(-r)\otimes \pi^*(\omega_Z \otimes \det(E))$. In our case we get that $\omega_P\cong \mathcal{O}_P(-3)\otimes \pi^*\mathcal{O}_{\mathbf{P}^1\times \mathbf{P}^1}(-4,-4)$ and hence the adjunction formula leads to $\omega_X^{\vee} \cong \mathcal{O}_X(1)\otimes \rho^*\mathcal{O}_{\mathbf{P}^1\times \mathbf{P}^1}(1,2)$, which is a primitive vector in the lattice $\mathbf{NS}(X)\cong \mathbf{Z}^3$ generated by the first Chern classes $H_1=c_1(\rho^*\mathcal{O}_{\mathbf{P}^1\times \mathbf{P}^1}(1,0)), H_2=c_1(\rho^*\mathcal{O}_{\mathbf{P}^1\times \mathbf{P}^1}(0,1))$ and $H_3=c_1(\mathcal{O}_X(1))$.
	
	Let us assume by contradiction that $X$ is the equivariant compactification of $\Ga^3$ and let $\Delta = D_1+D_2+D_3$ be the boundary divisor. It follows from Theorem \ref{theo:canonical divisor} that $-K_X-2\Delta$ is an effective divisor on $X$. We note that $-(K_X+2\Delta)$ is not zero (otherwise we would deduce that $-K_X = 2\Delta$ is not a primitive vector in $\mathbf{NS}(X)$). In particular, since $\NE(X)$ is a rational polyhedral cone, there is an irreducible curve $C\subseteq X$ such that $-(K_X+2\Delta)\cdot C > 0$. In other words, $K_X+2\Delta$ is not nef.
	
	It follows by \cite{Fuj87} that if $\Delta$ is ample and $K_X+2\Delta$ is not nef, then we have that $X$ is either isomorphic to $\mathcal{Q}_3\subseteq \mathbf{P}^4$ or $X$ is a $\mathbf{P}^2$-bundle over $\mathbf{P}^1$ and hence $X\cong \mathbf{P}(E)$ for some rank 3 vector bundle $E$ on $\mathbf{P}^1$, by \cite[Remark 9]{AR14}. In both cases we have that $B_2(X)\neq 3$, a contradiction.
	
	Let us suppose that $\Delta$ is not ample. We will follow a similar strategy as in \cite[Lemma 2.2]{Kis05}.
	
	If $\Delta$ is nef but not ample then there exists an irreducible curve $C\subseteq X$ such that $\Delta\cdot C = 0$. Since $\NE(X)$ is a rational polyhedral cone there is an extremal ray $R=\mathbf{R}_{\geq 0}[C_R]\subseteq \NE(X)$ such that $\Delta \cdot R = 0$. Let $\varphi_R:X\to X_R$ the associated extremal contraction with irreducible exceptional locus $\Exc(\varphi_R)$. Since $\Ga^3$ contains no complete curves, we must have that $C_R\subseteq D_1\cup D_2\cup D_3$ and hence $\varphi_R$ is a divisorial contraction such that $\Exc(\varphi_R)=D_i$ for some $i=1,2,3$. On one hand, we note that $\varphi_R:X\to X_R$ is not of type $E1$\footnote{See \cite[Theorem 3.3]{Mor82} for the classification of divisorial contractions on smooth threefolds. We keep the same notation as in \cite[Theorem 1.32]{KM98}.}, as we would have that $X_R$ is an equivariant compactification of $\Ga^3$ with boundary $\varphi_R(D_1)\cup \varphi_R(D_2)\cup \varphi_R(D_3)$ which is not of pure codimension 1, a contradiction with the fact that $\Ga^3$ is affine (cf. Proposition \ref{prop:boundary divisors}). On the other hand, Mori and Mukai proved in \cite[$\S$ 8.5]{MM83} that primitive Fano threefolds with $B_2(X)=3$ have no extremal contractions of type $E2,E3,E4$ or $E5$. Hence $\varphi_R$ cannot be divisorial, a contradiction.
	
	If $\Delta$ is not nef, then there exists an extremal ray $R=\mathbf{R}_{\geq 0}[C_R]\subseteq \NE(X)$ such that $\Delta \cdot R < 0$. Therefore, $\varphi_R$ is divisorial and $\Exc(\varphi_R)=D_i$ for some $i=1,2,3$. The same argument as above leads us to a contradiction.
\end{proof}

\begin{lemma}
 The Fano threefolds $\III_3$, $\III_5$, $\III_7$, $\III_{11}$, $\III_{12}$, $\III_{15}$, $\III_{17}$, $\III_{21}$, $\III_{22}$ and $\III_{24}$ {\bf are not} equivariant compactifications of $\Ga^3$.
\end{lemma}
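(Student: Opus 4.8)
The plan is to treat the ten families one at a time, in each case applying Blanchard's lemma to a suitable extremal contraction and reducing to a classification already in hand. For a given family, Mori and Mukai's table \cite[Table 3]{MM81} exhibits $X$ (or a general member of the family) as the blow-up $\varphi\colon X\to Y$ of a smooth threefold $Y$ along a smooth curve $C$, that is, as an extremal contraction of type $E1$. By Corollary \ref{co:equivariant blow-up} (Blanchard's lemma), a $\Ga^3$-structure on $X$ descends along $\varphi$ to a $\Ga^3$-structure on $Y$ for which $C$ is invariant; so for each family it suffices to show that $Y$ cannot carry such a structure, or cannot do so with $C$ invariant.

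First I would dispose of the families whose blow-down target $Y$ is not additive. If $B_2(Y)=1$, then Theorem \ref{theo:HT} forces $Y\cong\mathbf{P}^3$ or $Y\cong\mathcal{Q}_3$ as soon as $Y$ is additive, so any family admitting an $E1$-contraction onto $V_5$ — or onto any other Fano threefold of Picard number one distinct from $\mathbf{P}^3$ and $\mathcal{Q}_3$ — is excluded at once. If $B_2(Y)=2$, then Proposition \ref{prop:B_2=2} restricts an additive $Y$ to the list $\II_{28},\II_{30},\II_{31},\II_{33},\II_{34},\II_{35},\II_{36}$; hence any family whose $E1$-contraction lands outside this list (for instance onto one of the blow-ups $\II_{14},\II_{20},\II_{22},\II_{26}$ of $V_5$ of Picard number two) is likewise excluded.

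For the remaining families, where $Y$ is itself additive, I have to show that the center $C$ of $\varphi$ cannot be made invariant. The method is to use the explicit descriptions of the fixed data and to follow $C$, or rather its successive strict transforms, under further Blanchard reductions down to $\mathbf{P}^3$ or $\mathcal{Q}_3$: on $\mathbf{P}^3$ every $\Ga^3$-invariant curve lies in a plane by Corollary \ref{coro:P3}, on $\mathcal{Q}_3$ every invariant curve is a line of the special ruling through the singular point of the boundary $\mathcal{Q}_0^2$ by Corollary \ref{cor:smooth quartic 3fold}, and on $\mathbf{P}^1\times\mathbf{P}^2$ the invariant curves have bidegree $(1,0)$, $(0,1)$ or $(1,1)$ by Lemma \ref{lemma:product actions}. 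One then reads the numerical type of $C$ off Mori and Mukai's table — a twisted cubic, a plane curve of higher degree, an elliptic curve, a curve of prescribed bidegree, a line meeting a given exceptional divisor, and so on — and checks incompatibility with the permitted invariant types. Where this chain of reductions is awkward, one argues instead as in the treatment of $\III_2$ above: by Theorem \ref{theo:canonical divisor} the divisor $-K_X-2\Delta$ is effective, with $\Delta$ the sum of the boundary components (which generate $\Eff(X)$), and it is nonzero whenever $-K_X$ is a primitive class in $\mathbf{NS}(X)$; hence $K_X+2\Delta$ is not nef, and this is played against the list of extremal rays of $X$ in \cite[Table 3]{MM81}.

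The step I expect to be the main obstacle is precisely this last one, for the families that only contract to an additive base: the argument there is not formal, and requires carefully propagating the center through several successive blow-downs — keeping control of its strict transforms and of which exceptional divisors it meets — before the classification of invariant curves on $\mathbf{P}^3$ or $\mathcal{Q}_3$ can be brought to bear. A secondary nuisance is that some $E1$-contractions of these Picard number $3$ threefolds have non-Fano image, so Proposition \ref{prop:B_2=2} does not apply verbatim and one must either switch to a different extremal ray or fall back on the $-K_X-2\Delta$ argument using the shape of $\NE(X)$.
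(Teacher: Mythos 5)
Your general strategy --- contract via an $E1$-ray, descend the $\Ga^3$-structure by Blanchard's lemma, and rule out the center being invariant --- is the right circle of ideas, but as written the proposal has a genuine gap: the decisive verifications are never carried out. You split the ten families into those whose blow-down target is non-additive (disposed of at once) and those whose target is additive (requiring an analysis of the center), but you do not say which of $\III_3$, $\III_5$, $\III_7$, $\III_{11}$, $\III_{12}$, $\III_{15}$, $\III_{17}$, $\III_{21}$, $\III_{22}$, $\III_{24}$ falls into which bucket, and several of them (e.g.\ $\III_3$, $\III_{17}$, $\III_{24}$, which Mori--Mukai describe as divisors in products or fiber products rather than as blow-ups of non-additive Fanos) do not obviously land in the easy bucket. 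For the hard bucket you yourself flag that the argument ``is not formal'' and requires propagating the center through several successive blow-downs; that is precisely the content of the lemma, and it is left undone for every family. A secondary issue: you attribute to Lemma \ref{lemma:product actions} the statement that invariant curves on $\mathbf{P}^1\times\mathbf{P}^2$ have bidegree $(1,0)$, $(0,1)$ or $(1,1)$; that lemma only counts the two $\Ga^3$-structures, and the bidegree claim would itself need a proof.

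The paper avoids all of this with one uniform observation you are missing: every one of the ten families admits a description as the blow-up of $\II_{34}\cong\mathbf{P}^1\times\mathbf{P}^2$ along a curve. Given the classification of the two $\Ga^3$-structures on $\mathbf{P}^1\times\mathbf{P}^2$ (Corollary \ref{co:productauto} plus Proposition \ref{prop:P2}), the only Fano threefolds obtainable as an equivariant blow-up of $\mathbf{P}^1\times\mathbf{P}^2$ along an invariant curve are the toric ones $\III_{26}$ and $\III_{28}$, and none of the ten families is toric by Theorem \ref{theo:toric fano 3folds}. With that single choice of extremal contraction the case analysis over $V_5$, $\mathbf{P}^3$, $\mathcal{Q}_3$ and the $B_2=2$ list, as well as the $-K_X-2\Delta$ fallback, becomes unnecessary. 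If you want to salvage your route, the minimal fix is to (i) identify, for each of the ten families, a specific $E1$-contraction and its target, and (ii) actually perform the invariance check on the center in each case --- but you should expect this to reproduce, family by family, what the single reduction to $\mathbf{P}^1\times\mathbf{P}^2$ gives in one stroke.
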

\begin{proof} On one hand, all these varieties are obtained as the blow-up of $\II_{34}\cong \mathbf{P}^1\times \mathbf{P}^2$ along a curve (see Mori and Mukai classification \cite[Table 3]{MM81}). On the other hand, it follows by the classification of all possible $\Ga^3-$structures on $\mathbf{P}^1\times \mathbf{P}^2$ given in Lemma \ref{lemma:product actions} that the only additive Fano threefolds obtained as the blow-up of $\mathbf{P}^1\times \mathbf{P}^2$ along a curve are toric\footnote{They correspond to the varieties $\III_{26}$ and $\III_{28}$ (cf. Lemma \ref{lemma:B_2=3 toric} below).}. The conclusion follows from the classification of smooth toric Fano threefolds (see Theorem \ref{theo:toric fano 3folds}).
\end{proof}

\begin{lemma}
 The Fano threefolds $\III_4$, $\III_6$, $\III_8$, $\III_{13}$, $\III_{16}$, $\III_{18}$, $\III_{20}$ {\bf are not} equivariant compactifications of $\Ga^3$.
\end{lemma}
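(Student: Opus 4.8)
The plan is to run, for each of the seven families, the reduction already used for the $\mathbf{P}^1\times\mathbf{P}^2$-blow-ups of the previous lemma. By \cite[Table 3]{MM81} each of $\III_4,\III_6,\III_8,\III_{13},\III_{16},\III_{18},\III_{20}$ is a blow-up $X=\operatorname{Bl}_C Y$ of a smooth Fano threefold $Y$ of Picard number $\le 2$, where the center $C$ is either a smooth irreducible curve — and then $B_2(Y)=2$, so $Y$ is one of the families studied in Proposition \ref{prop:B_2=2} — or a disjoint union of two positive-dimensional curves, in which case $B_2(Y)=1$ and hence $Y\cong\mathbf{P}^3$ or $Y\cong\mathcal{Q}_3\subseteq\mathbf{P}^4$. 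The first step is to read off such a presentation for each of the seven; for instance $\III_4$ is the blow-up of $\II_{32}\cong\mathbf{P}(T_{\mathbf{P}^2})$ along an elliptic curve.

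With a presentation $X=\operatorname{Bl}_C Y$ in hand, the main tool is Corollary \ref{co:equivariant blow-up}: any $\Ga^3$-structure on $X$ descends to a $\Ga^3$-structure on $Y$ under which $C$ is invariant. This at once disposes of every case whose base $Y$ is not itself an equivariant compactification of $\Ga^3$ — in particular $\III_4$, since $\II_{32}$ is not additive by Proposition \ref{prop:flag varieties} with $n=2$, and likewise any family realized as the blow-up of one of the $\II_j$ not appearing in Proposition \ref{prop:B_2=2} (for instance $\II_{27}$). For the cases in which $Y$ is $\mathbf{P}^3$ or $\mathcal{Q}_3$, the center $C$ is necessarily a disjoint union of two positive-dimensional curves; here I would use that every invariant curve of a $\Ga^3$-structure on $\mathbf{P}^3$ lies in its unique boundary plane (Corollary \ref{coro:P3}), while every invariant curve on $\mathcal{Q}_3$ passes through the singular point of the boundary quadric cone (Corollary \ref{cor:smooth quartic 3fold}); in either situation any two invariant curves necessarily meet, so $C$ cannot be invariant, and Corollary \ref{co:equivariant blow-up} produces the contradiction.

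Should one of the seven instead be a blow-up of an \emph{additive} threefold of Proposition \ref{prop:B_2=2} along a curve whose non-invariance is not transparent, I would fall back — exactly as in the treatment of $\II_{24}$ and $\III_2$ — on the numerical obstruction of Theorem \ref{theo:canonical divisor}: writing $\Delta=D_1+D_2+D_3$ for the boundary, one has $-K_X=\sum_i a_iD_i$ with all $a_i\ge 2$, and comparing this with the expression for $-K_X$ and with the description of $\mathbf{NS}(X)$ coming from the blow-up or projective bundle structure forces a contradiction — either one of the coefficients would have to be smaller than $2$, as for $\II_{24}$, or $-K_X=2\Delta$ would be a non-primitive class in $\mathbf{NS}(X)$, as for $\III_2$. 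The genuine difficulty here is bookkeeping rather than any new idea: several of these threefolds admit more than one blow-down, so for each one must exhibit a presentation with either a non-additive base or a verifiably non-invariant center, and then check the relevant disjointness and planarity conditions against \cite[Table 3]{MM81}.
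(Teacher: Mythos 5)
Your strategy is sound and, for the irreducible-center cases, is exactly the paper's: present each threefold as $\operatorname{Bl}_C Y$, descend the $\Ga^3$-structure to $Y$ via Blanchard's lemma and Corollary \ref{co:equivariant blow-up}, and contradict either the non-additivity of $Y$ or the invariance of $C$. One factual slip: $\III_{4}$ is \emph{not} the blow-up of $\II_{32}\cong\mathbf{P}(T_{\mathbf{P}^2})$ along an elliptic curve --- that describes $\III_{7}$, which the paper treats in a different lemma; by \cite[Table 3]{MM81}, $\III_{4}$ is the blow-up of $\II_{18}$ (the double cover of $\mathbf{P}^2\times\mathbf{P}^1$ branched in a $(2,2)$-divisor) along a smooth fiber of the map to $\mathbf{P}^2$. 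Since $\II_{18}$ is equally excluded by Proposition \ref{prop:B_2=2}, your argument survives the correction; the bases for the irreducible-center cases are $\II_{18}$, $\II_{24}$, $\II_{32}$ and, for $\III_{16}$ (after commuting the point and curve blow-ups, or after noting that a twisted cubic is non-planar and hence never invariant on $\mathbf{P}^3$), $\II_{27}$. Where you genuinely diverge from the paper is in the disconnected-center cases $\III_{6}$, $\III_{18}$, $\III_{20}$: the paper's one-line proof asserts that all seven are blow-ups along a curve of non-additive Fano threefolds, which for these three requires presentations not read off directly from the table (e.g.\ $\III_{6}$ as a blow-up of $\II_{25}$), and for $\III_{20}$ the table's presentation only blows down to the \emph{additive} $\II_{31}$. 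You instead descend all the way to $\mathbf{P}^3$ or $\mathcal{Q}_3$ and use Corollary \ref{coro:P3} and Corollary \ref{cor:smooth quartic 3fold} to see that the two disjoint components of the center cannot both be invariant, since all invariant curves on $\mathbf{P}^3$ lie in the boundary plane and all invariant curves on $\mathcal{Q}_3$ pass through the vertex of the boundary cone. This is precisely the argument the paper deploys for the neighbouring families $\III_{10}$, $\III_{14}$, $\III_{15}$, $\III_{19}$, and it is the more directly verifiable route here as well; your numerical fallback via Theorem \ref{theo:canonical divisor} is never actually needed.
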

\begin{proof}
 It follows from Mori and Mukai classification \cite[Table 3]{MM81} that all these varieties are obtained as the blow-up along a curve on Fano threefolds which do not belong to the families listed in Proposition \ref{prop:B_2=2}. If one of these varieties is an equivariant compactification of $\Ga^3$ then Blanchard's lemma and Corollary \ref{co:equivariant blow-up} would lead us to a contradiction.
\end{proof}

\begin{lemma}
	The Fano threefold $\III_{9}$ {\bf is not} an equivariant compactification of $\Ga^3$.
\end{lemma}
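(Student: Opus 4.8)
The plan is to combine the Mori--Mukai presentation of $\III_{9}$ with Corollary~\ref{co:equivariant blow-up} and the description of additive actions on $\II_{36}$.

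By \cite[Table 3]{MM81}, a member $X$ of the family $\III_{9}$ is the blow-up of the cone $W\subseteq\mathbf{P}^{6}$ over the Veronese surface along the disjoint union of the vertex of $W$ and a smooth quartic curve lying on that Veronese surface. Blowing up the vertex of $W$ first yields $\II_{36}=\mathbf{P}(\mathcal{O}_{\mathbf{P}^{2}}\oplus\mathcal{O}_{\mathbf{P}^{2}}(2))$, and under this identification the Veronese surface becomes a section $\Sigma$ of the projection $\pi\colon\II_{36}\to\mathbf{P}^{2}$ while the quartic curve becomes a smooth curve $C\subseteq\Sigma$ whose image $\pi(C)$ is a plane quartic in $\mathbf{P}^{2}$; in particular $C$ has genus $3$ and $X\cong\operatorname{Bl}_{C}\II_{36}$. (This is consistent with the invariants of $\III_{9}$: since $-K_{\II_{36}}$ restricts to $\mathcal{O}_{\mathbf{P}^{2}}(5)$ on $\Sigma$ one gets $(-K_{X})^{3}=62-2\cdot 20+2\cdot 3-2=26$.)

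Now suppose $X$ carries a $\Ga^{3}$-structure. By Corollary~\ref{co:equivariant blow-up} it descends to a $\Ga^{3}$-structure on $\II_{36}$ for which the curve $C$ is invariant. Since $\Ga^{3}$ is unipotent and $\operatorname{Aut}^{0}(C)$ is trivial (because $C$ has genus $3$), the induced action on $C$ is trivial, so $C$ is fixed pointwise. By Blanchard's lemma (Theorem~\ref{theo:Blanchard}) the $\Ga^{3}$-action on $\II_{36}$ descends along $\pi$ to an action on $\mathbf{P}^{2}$, which is generically transitive because $\pi$ carries the open orbit of $\II_{36}$ onto a dense orbit; hence by Proposition~\ref{prop:Gastructureonproj} it factors through a quotient isomorphic to $\Ga^{2}$, and by Proposition~\ref{prop:P2} its fixed locus in $\mathbf{P}^{2}$ is a line or a point. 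As $\pi$ is equivariant and $C$ is pointwise fixed, the plane quartic $\pi(C)$ must be contained in that fixed locus, which is impossible since a quartic curve lies on neither a line nor a point. Therefore $\III_{9}$ is not an equivariant compactification of $\Ga^{3}$.

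The only step requiring real care is translating the Mori--Mukai description into the presentation $X\cong\operatorname{Bl}_{C}\II_{36}$ with $C$ a plane quartic contained in a section of $\pi$; once that is established the argument is immediate. The mechanism at work is that an invariant curve of positive genus must be fixed pointwise, whereas the fixed locus of any additive action on $\II_{36}$ maps into the fixed locus of an additive action on $\mathbf{P}^{2}$ --- too small to contain a plane quartic.
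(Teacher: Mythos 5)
Your proof is correct, and after the common reduction it diverges from the paper's argument in an interesting way. Both you and the paper first blow down the vertex to present $X$ as the blow-up of $Y=\II_{36}=\mathbf{P}(\mathcal{O}_{\mathbf{P}^2}\oplus\mathcal{O}_{\mathbf{P}^2}(2))$ along a plane quartic $C$ lying in the section $S_\infty$, and use Blanchard's lemma / Corollary \ref{co:equivariant blow-up} to conclude that $Y$ would carry a $\Ga^3$-structure leaving $C$ invariant. The paper then finishes by appealing to Kishimoto's classification (Theorem \ref{theo:kishimoto}): the unique compactification $\mathbf{A}^3\hookrightarrow\II_{36}$ with non-nef log canonical divisor has boundary $S_0\cup\pi^{-1}(\ell)$ for a line $\ell\subseteq\mathbf{P}^2$, so an invariant curve in $S_\infty$ must lie in $S_\infty\cap\pi^{-1}(\ell)$, a line, which a quartic cannot do. You instead exploit the positive genus of $C$: since a smooth plane quartic has finite automorphism group, the connected group $\Ga^3$ fixes $C$ pointwise; descending along $\pi$ by Theorem \ref{theo:Blanchard} and invoking Proposition \ref{prop:Gastructureonproj} and Proposition \ref{prop:P2}, the fixed locus downstairs is a point or a line, which cannot contain the quartic $\pi(C)$. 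Your route is more self-contained (it bypasses Kishimoto's table entirely, using only the Hassett--Tschinkel classification on $\mathbf{P}^2$), at the mild cost of needing $C$ smooth of genus $\geq 2$ --- which is guaranteed here since $X$ is smooth --- whereas the paper's boundary argument would also rule out rational invariant curves of degree $\geq 2$ in $S_\infty$. Both arguments are sound; the verification of $(-K_X)^3=26$ you include is a correct consistency check but not needed for the proof.
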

\begin{proof}
A member $X$ of the family $\III_9$ is given by the blow-up of the cone $W\subseteq \mathbf{P}^6$ over the Veronese surface $S\subseteq \mathbf{P}^5$ with center the disjoint union of the vertex of $W$ and a quadric curve in $S\cong \mathbf{P}^2$. Let us note that $W$ is isomorphic to the weighted projective space $\mathbf{P}(1,1,1,2)$ and that the blow-up of the vertex $\psi:Y\to W$ is a resolution of singularities, where $Y\cong \mathbf{P}(\mathcal{O}_{\mathbf{P}^2}\oplus \mathcal{O}_{\mathbf{P}^2}(2))$ and $\psi$ corresponds via this isomorphism to the divisorial contraction sending the section $S_0\cong \mathbf{P}^2$ of the structural morphism $\pi:Y\to \mathbf{P}^2$ associated to the quotient $\mathcal{O}_{\mathbf{P}^2}\oplus \mathcal{O}_{\mathbf{P}^2}(2)\to \mathcal{O}_{\mathbf{P}^2}$ into the vertex of $W$. Let $S_\infty \cong \mathbf{P}^2$ be the section of $\pi:Y\to \mathbf{P}^2$ associated to the quotient $\mathcal{O}_{\mathbf{P}^2}\oplus \mathcal{O}_{\mathbf{P}^2}(2)\to \mathcal{O}_{\mathbf{P}^2}(2)$ and let $C\subseteq S_\infty$ be a quartic curve. Then $\sigma:X\to Y$ is given by the blow-up of $Y$ along $C$, and the image $\psi(S_\infty)$ in $W$ is isomorphic to the Veronese surface $S\subseteq \mathbf{P}^5$.

Let us assume that $X$ is an equivariant compactification of $\Ga^3$. On one hand, given a $\Ga^3-$structure on $X$, Blanchard's lemma implies that there is a unique $\Ga^3-$structure on $Y$ such that $\sigma:X\to Y$ is an equivariant morphism and that $Y$ an equivariant compactification of $\Ga^3$. On the other hand, Kishimoto's classification \cite[Table 1]{Kis05} shows that there is a unique way to compactify $\mathbf{A}^3$ into $\mathbf{P}(\mathcal{O}_{\mathbf{P}^2}\oplus \mathcal{O}_{\mathbf{P}^2}(2))$ with not nef log canonical divisor. In this case, the boundary divisors are given by $D_1\cong \mathbf{P}^2$, the section $S_0\cong \mathbf{P}^2$ of $\pi:Y\to \mathbf{P}^2$, and $D_2 \cong \mathbf{F}_2$ obtained as the pullback by $\pi$ of a line in $\mathbf{P}^2$. In particular, the intersection curve $S_\infty \cap D_2$ is a line, and therefore quartic curves in $S_\infty$ are not invariant by the $\Ga^3-$action, a contradiction.
\end{proof}

\begin{lemma}
The Fano threefolds $\III_{10}$, $\III_{15}$ and $\III_{19}$ {\bf are not} equivariant compactifications of $\Ga^3$.
\end{lemma}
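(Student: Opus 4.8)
The plan is to realize all three threefolds as blow-ups of the smooth quadric $\mathcal{Q}_3\subseteq\mathbf{P}^4$ and to combine this with the rigidity of the $\Ga^3$-action on $\mathcal{Q}_3$ established in Theorem \ref{theo:smooth quadric} and Corollary \ref{cor:smooth quartic 3fold}. According to Mori and Mukai's classification \cite[Table 3]{MM81}, the threefold $\III_{10}$ is the blow-up of $\mathcal{Q}_3$ along a disjoint union of two conics lying on it, $\III_{15}$ is the blow-up of $\mathcal{Q}_3$ along a disjoint union of a line and a conic lying on it, and $\III_{19}$ is the blow-up of $\mathcal{Q}_3$ at two distinct points. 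In each case the center $Z=Z_1\sqcup Z_2$ of the blow-up has exactly two connected components.

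The first step is to descend a hypothetical $\Ga^3$-structure to $\mathcal{Q}_3$. Since $Z_1$ and $Z_2$ are disjoint, blowing up $Z$ coincides with blowing up $Z_1$ and then the strict transform $\widetilde Z_2\cong Z_2$ of $Z_2$. Hence, if $X$ is one of the three threefolds and is an equivariant compactification of $\Ga^3$, then applying Corollary \ref{co:equivariant blow-up} twice produces a $\Ga^3$-structure on $\mathcal{Q}_3$ together with $\Ga^3$-equivariant contractions $X\to\operatorname{Bl}_{Z_1}\mathcal{Q}_3\to\mathcal{Q}_3$. Since equivariant morphisms carry invariant closed subsets to invariant closed subsets, the images of $Z_1$ and $Z_2$ are both invariant, and the connectedness of $\Ga^3$ prevents it from exchanging the two components; thus $\mathcal{Q}_3$ would carry a $\Ga^3$-structure for which both $Z_1$ and $Z_2$ are invariant subvarieties.

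The second step is to derive a contradiction. By Theorem \ref{theo:smooth quadric}, $\mathcal{Q}_3$ admits a unique $\Ga^3$-structure, whose fixed locus is the single point $[0:\cdots:0:1]$; and by Corollary \ref{cor:smooth quartic 3fold} its only invariant curves are the lines of one ruling of the boundary quadric cone $\mathcal{Q}^2_0$ passing through its vertex, so that in particular any two invariant lines meet at that vertex and there is no invariant conic. This is incompatible with each of the three centers: for $\III_{10}$ and for $\III_{15}$ at least one of $Z_1,Z_2$ is a conic on $\mathcal{Q}_3$, which cannot be invariant; and for $\III_{19}$ the two components would be two distinct fixed points, whereas there is exactly one. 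Therefore none of $\III_{10}$, $\III_{15}$, $\III_{19}$ is an equivariant compactification of $\Ga^3$.

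I do not expect a genuine obstacle here: the whole geometric content is supplied by Theorem \ref{theo:smooth quadric} and Corollary \ref{cor:smooth quartic 3fold}, and the only points that need care are the bookkeeping in the descent step — that, treating the two connected components one at a time, Corollary \ref{co:equivariant blow-up} and Blanchard's lemma (Theorem \ref{theo:Blanchard}) force each of $Z_1$ and $Z_2$ to be an invariant subvariety of $\mathcal{Q}_3$ — together with the correct reading of the blow-up centers from \cite[Table 3]{MM81}.
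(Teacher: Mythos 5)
Your proof is correct and follows essentially the same route as the paper: read the blow-up centers from Mori--Mukai, descend the action to $\mathcal{Q}_3$ via Corollary \ref{co:equivariant blow-up}, and contradict the description of invariant curves and the unique fixed point from Theorem \ref{theo:smooth quadric} and Corollary \ref{cor:smooth quartic 3fold}. The only cosmetic difference is that the paper rules out $\III_{10}$ and $\III_{15}$ by noting that invariant curves are never disjoint, whereas you also use that a conic cannot be invariant; both observations are immediate from the same corollary.
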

\begin{proof}
The varieties $\III_{10}$ and $\III_{15}$ are obtained as the blow-up of the smooth quadric $\mathcal{Q}_3 \subseteq \mathbf{P}^4$ along two disjoint curves, while the variety $\III_{19}$ is obtained as the blow-up of $\mathcal{Q}_3 \subseteq \mathbf{P}^4$ with center two points which are not colinear (see Mori and Mukai classification \cite[Table 3]{MM81}). By Corollary \ref{cor:smooth quartic 3fold}, the only curves which are invariant by the unique action in $\mathcal{Q}_3 \subseteq \mathbf{P}^4$ are lines contained on the boundary divisor $\mathcal{Q}_0^2 \subseteq \mathcal{Q}_3$ corresponding the ruling passing through the singular point of $\mathcal{Q}_0^2$. In particular, the invariant curves are not disjoint and the only fixed point by the action is the singular point of $\mathcal{Q}_0^2$. The conclusion follows from Corollary \ref{co:equivariant blow-up}.
\end{proof}

\begin{lemma}
The Fano threefold $\III_{14}$ {\bf is not} an equivariant compactification of $\Ga^3$.
\end{lemma}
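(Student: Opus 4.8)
The plan is to reduce, via Corollary \ref{co:equivariant blow-up} (a consequence of Blanchard's lemma), to the study of additive structures on $\mathbf{P}^3$, and then to use the fact that for any such structure every invariant curve and every fixed point is confined to a single hyperplane.

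Recall from \cite[Table 3]{MM81} that a member $X$ of the family $\III_{14}$ is the blow-up $\sigma\colon X\to\mathbf{P}^3$ with center $Z=C\sqcup\{p\}$, where $C$ is a smooth plane cubic spanning a plane $S\cong\mathbf{P}^2$ and $p$ is a point with $p\notin S$ (this last condition being forced by the Fano hypothesis). First I would apply Corollary \ref{co:equivariant blow-up}: if $X$ were an equivariant compactification of $\Ga^3$, then $\mathbf{P}^3$ would carry a $\Ga^3$-structure for which $\Ga^3$ acts on $Z$; since $\Ga^3$ is connected it preserves each connected component of $Z$, so that $C$ is $\Ga^3$-invariant and $p$ is a fixed point of the action.

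Next I would analyse this structure on $\mathbf{P}^3$. By Proposition \ref{prop:boundary divisors} the boundary $\mathbf{P}^3\setminus\Ga^3$ is a single prime divisor, whose class generates $\operatorname{Cl}(\mathbf{P}^3)\cong\mathbf{Z}$; hence this divisor is a hyperplane $B\cong\mathbf{P}^2$ and $\mathbf{A}^3\cong\Ga^3=\mathbf{P}^3\setminus B$ is the open orbit. Any invariant closed subset meeting the open orbit must contain it; since $C$ has dimension one and $\{p\}$ is a single point, neither can contain the three-dimensional open orbit, and therefore $C\subseteq B$ and $p\in B$. (Equivalently, one can read this off the explicit description of the four $\Ga^3$-structures on $\mathbf{P}^3$ given in Corollary \ref{coro:P3}, whose fixed loci are a point, a line or a plane, in every case lying in the boundary.)

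To conclude: since $C$ is a plane cubic, the plane it spans is $S=\langle C\rangle$; because $C\subseteq B$ and $B$ is itself a plane, $B=S$, whence $p\in B=S$, contradicting $p\notin S$. Thus $\III_{14}$ is not an equivariant compactification of $\Ga^3$. I do not expect a real obstacle here: the argument is essentially immediate once the description of $\III_{14}$ is in hand, the only points worth a word being that Corollary \ref{co:equivariant blow-up} is being applied to the disconnected center $Z$ (harmless, a connected group preserving each connected component) and that the boundary of an additive structure on $\mathbf{P}^3$ is necessarily a hyperplane; the contradiction is then just that ``all invariant curves lie in a single plane'' is incompatible with equivariantly blowing up both a plane cubic and a point off the plane it spans.
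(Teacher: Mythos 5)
Your proof is correct and follows essentially the same route as the paper: reduce via Corollary \ref{co:equivariant blow-up} to a $\Ga^3$-structure on $\mathbf{P}^3$ leaving the center invariant, observe that every invariant proper subvariety lies in the single boundary hyperplane, and derive the contradiction from the fact that the plane spanned by the cubic would then have to contain the blown-up point. The paper states the key fact by appeal to Corollary \ref{coro:P3}, whereas you derive it from Proposition \ref{prop:boundary divisors}; both are valid and your write-up simply makes the argument more explicit.
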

\begin{proof}
The variety $\III_{14}$ is obtained as the blow-up of $\mathbf{P}^3$ with center the disjoint union of a cubic curve in a plane $\mathbf{P}^2\subseteq \mathbf{P}^3$ and a point not in this plane (see Mori and Mukai classification \cite[Table 3]{MM81}). It follows from Corollary \ref{coro:P3} that the only invariant (proper) subvarieties for some of the four possible $\Ga^3-$structures in $\mathbf{P}^3$ are contained in a plane $\mathbf{P}^2\subseteq \mathbf{P}^3$. The conclusion follows from Corollary \ref{co:equivariant blow-up}.
\end{proof}

\begin{lemma}\label{lemma:III23}
	There is a Fano threefold in the family $\III_{23}$ which {\bf is} an equivariant compactification of $\Ga^3$.
\end{lemma}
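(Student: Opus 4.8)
The plan is to exhibit one member of the family $\III_{23}$ as a two-step equivariant blow-up of $\mathbf{P}^3$. By the description given in the Main Theorem, a member of $\III_{23}$ is the blow-up of $\operatorname{Bl}_p\mathbf{P}^3\cong\II_{35}$ along the strict transform $\widetilde C$ of a smooth conic $C\subseteq\mathbf{P}^3$ passing through $p$. Hence, using Corollary \ref{co:equivariant blow-up}, it suffices to find a $\Ga^3$-structure on $\mathbf{P}^3$ for which some point $p$ is fixed and some smooth conic $C\ni p$ is invariant: the point $p$ being fixed, the structure lifts to $\operatorname{Bl}_p\mathbf{P}^3$; the strict transform $\widetilde C$ is then invariant for this lifted structure; and a second application of Corollary \ref{co:equivariant blow-up} produces a $\Ga^3$-structure on $\operatorname{Bl}_{\widetilde C}\operatorname{Bl}_p\mathbf{P}^3$.

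Such a configuration is furnished by the additive structure $\rho_4$ on $\mathbf{P}^3$ of Corollary \ref{coro:P3}, the one whose fixed locus is a plane $\Pi\cong\mathbf{P}^2$: in this case $\Ga^3$ acts by translations on the open orbit $\mathbf{A}^3=\mathbf{P}^3\setminus\Pi$, and $\Pi$ is fixed pointwise. In particular every point of $\Pi$ is fixed and every curve contained in $\Pi$ is invariant, so we may simply take any smooth conic $C\subseteq\Pi$ and any point $p\in C$; by the previous paragraph $X:=\operatorname{Bl}_{\widetilde C}\operatorname{Bl}_p\mathbf{P}^3$ then carries a $\Ga^3$-structure. (Alternatively, one may start from the unique additive structure on $\mathcal{Q}_3$ of Theorem \ref{theo:smooth quadric}: working in the affine chart around its fixed point $s$ one checks that its differential at $s$ is the identity, hence the exceptional divisor $\mathbf{P}^2$ of $\operatorname{Bl}_s\mathcal{Q}_3$ --- which is a member of $\II_{30}$, cf. Corollary \ref{cor:smooth quartic 3fold} --- is fixed pointwise, and it then suffices to blow up a conic lying inside it.)

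It remains to verify that $X$ is a smooth Fano threefold belonging to the family $\III_{23}$. Smoothness and $B_2(X)=3$ are immediate, since $X$ is obtained from $\mathbf{P}^3$ by blowing up first a smooth point and then a smooth irreducible curve. For the ampleness of $-K_X$ one writes $-K_X=\sigma^*\pi^*\mathcal{O}_{\mathbf{P}^3}(4)-2\,\sigma^*E-G$, where $\pi\colon\operatorname{Bl}_p\mathbf{P}^3\to\mathbf{P}^3$ and $\sigma\colon X\to\operatorname{Bl}_p\mathbf{P}^3$ are the two blow-ups, $E$ is the exceptional divisor over $p$ and $G$ the one over $\widetilde C$, and then bounds $-K_X$ below on the finitely many extremal rays of the cone of curves of $X$; equivalently, one exhibits the two nontrivial Mori contractions of $X$. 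Finally, as $X$ is by construction the blow-up of $\II_{35}$ along the strict transform of a conic through the center of $\II_{35}\to\mathbf{P}^3$, comparison with \cite[Table 3]{MM81} identifies its deformation class as $\III_{23}$. This last identification is the main point requiring care: one must ensure that the blow-up is Fano and that it lands precisely in family $\III_{23}$ rather than in a neighbouring Picard-rank-three family, which amounts to keeping careful track of the birational geometry (and can be double-checked numerically, e.g.\ through the value of $(-K_X)^3$).
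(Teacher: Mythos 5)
Your proposal is correct and follows essentially the same route as the paper: both take the additive structure $\rho_4$ on $\mathbf{P}^3$ whose fixed locus is a plane $\Pi$, place the conic inside $\Pi$ with $p\in C$, and lift the $\Ga^3$-action through the two blow-ups via Corollary \ref{co:equivariant blow-up}. Your observation that $\widetilde C$ is invariant as the closure of a pointwise-fixed set replaces (and slightly streamlines) the paper's remark that the tangent direction of $C$ at $p$ is a fixed point of the induced action on the exceptional $\mathbf{P}^2$, while the identification of the result as a Fano threefold of type $\III_{23}$ is, as in the paper, delegated to the Mori--Mukai description of the family.
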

\begin{proof}
	The member $X$ of the family $\III_{23}$ corresponds to the blow-up of $\mathbf{P}(\mathcal{O}_{\mathbf{P}^2}\oplus \mathcal{O}_{\mathbf{P}^2}(1))$ with center a conic passing through the center of the blow-up $\mathbf{P}(\mathcal{O}_{\mathbf{P}^2}\oplus \mathcal{O}_{\mathbf{P}^2}(1))\to \mathbf{P}^3$. A $\Ga^3-$structure induces an action on the exceptional divisor $E\cong \mathbf{P}^2$ on $\II_{35}\cong \mathbf{P}(\mathcal{O}_{\mathbf{P}^2}\oplus \mathcal{O}_{\mathbf{P}^2}(1))$. It suffices to consider the normalized $\Ga^3-$structure on $\mathbf{P}^3$ with fixed locus a plane $\mathbf{P}^2$, and take a conic in this plane with tangent direction given by a fixed point on the exceptional divisor (with respect to the induced action). Then the strict transform of this conic is totally invariant under the lifted $\Ga^3-$action and thus this action lifts again to $X$.
\end{proof}

\begin{lemma}\label{lemma:B_2=3 toric}
 The toric Fano threefold $\III_{25}$ {\bf is not} an equivariant compactification of $\Ga^3$. The toric Fano threefolds $\III_{26}$, $\III_{27}$, $\III_{28}$, $\III_{29}$, $\III_{30}$ and $\III_{31}$ {\bf are} equivariant compactifications of $\Ga^3$.
\end{lemma}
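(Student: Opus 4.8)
The plan is to deduce the lemma from the classification of additive toric Fano threefolds already obtained in Proposition \ref{prop:toric case}, spelling out the inputs on which that classification rests. By Theorem \ref{theo:toric fano 3folds} each of $\III_{25},\ldots,\III_{31}$ is a smooth toric Fano threefold, so the combinatorial criterion of Theorem \ref{theo:additive toric varieties} applies to each of them; moreover $\III_{26},\ldots,\III_{31}$ --- but not $\III_{25}$ --- are precisely the varieties of Picard number three appearing in the list of Proposition \ref{prop:toric case}. Thus the statement is literally a subcase of that proposition, and what remains is to recall why it holds.

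For the negative assertion I would use that $\III_{25}$ is the blow-up of $\mathbf{P}^3$ along two disjoint lines. By Corollary \ref{coro:P3}, every curve invariant under one of the four $\Ga^3$-structures on $\mathbf{P}^3$ is contained in a plane $\mathbf{P}^2\subseteq\mathbf{P}^3$; since two lines lying in a common plane necessarily meet, no pair of disjoint lines can be simultaneously invariant, and Corollary \ref{co:equivariant blow-up} then rules out a $\Ga^3$-structure on $\III_{25}$. Equivalently, one checks that the anticanonical polytope of $\III_{25}$ is not inscribed in a rectangle, which is the route taken in the proof of Proposition \ref{prop:toric case}.

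For the positive assertions, by Blanchard's lemma (Theorem \ref{theo:Blanchard}) and Corollary \ref{co:equivariant blow-up} it is enough to produce $\Ga^3$-structures on $\III_{26}$, $\III_{29}$, $\IV_{10}$, $\IV_{11}$ and $\IV_{12}$: the remaining $\III_{27}$, $\III_{28}$, $\III_{30}$, $\III_{31}$ are then obtained from these by contracting $\Ga^3$-invariant divisors (compare the family tree in the introduction). The variety $\III_{26}$, the blow-up of $\mathbf{P}^3$ along a disjoint union of a point and a line, is handled by taking the structure $\rho_4$ of Corollary \ref{coro:P3}, whose fixed locus is a plane, and blowing up a line in that plane together with a point off it, both being invariant; the structure lifts by Corollary \ref{co:equivariant blow-up}. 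The variety $\IV_{10}\cong\mathbf{P}^1\times\mathbf{S}_7$ is additive by Example \ref{example:del Pezzo}, as a product of the additive varieties $\mathbf{P}^1$ and $\mathbf{S}_7$, and the explicit $\Ga^3$-actions on $\III_{29}$, $\IV_{11}$ and $\IV_{12}$ are written down in coordinates in Appendix \ref{appendix}. Alternatively, $\III_{27}\cong(\mathbf{P}^1)^3$ and $\III_{28}\cong\mathbf{P}^1\times\mathbf{F}_1$ carry the obvious product $\Ga^3$-structures (Lemma \ref{lemma:product actions}, Example \ref{example:del Pezzo}).

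The only steps that are not purely formal are the explicit coordinate constructions of the $\Ga^3$-actions on $\III_{29}$, $\IV_{11}$ and $\IV_{12}$ carried out in Appendix \ref{appendix}, together with the polytope check for $\III_{25}$; granting these, every implication above is a formal consequence of Blanchard's lemma, so I do not expect a genuine obstacle here beyond the bookkeeping of placing each of the seven varieties correctly within the blow-down chains emanating from $\mathbf{P}^3$, $\II_{35}$ and $\mathbf{P}^1\times\mathbf{P}^2$ (resp. from the appendix computations).
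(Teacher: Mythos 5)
Your proposal is correct and follows essentially the same route as the paper: the paper's proof of this lemma is literally the one-line reduction to Proposition \ref{prop:toric case}, whose own proof uses exactly the ingredients you recall (Blanchard's lemma plus the blow-down chains, Corollary \ref{coro:P3} for ruling out $\III_{25}$ via the two disjoint lines, Example \ref{example:del Pezzo} for $\IV_{10}$, and the Appendix \ref{appendix} polytope computations for $\III_{29}$, $\IV_{11}$, $\IV_{12}$). The only cosmetic discrepancy is that the paper excludes $\III_{25}$ via Corollary \ref{coro:P3} rather than by the polytope criterion you mention as an alternative, but you give the Corollary \ref{coro:P3} argument as your primary one anyway.
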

\begin{proof}
 This is a particular case of Proposition \ref{prop:toric case}.
\end{proof}

Our analysis is summarized in the following list.

\begin{propo}\label{prop:B_2=3}
Let $X$ be a smooth Fano threefold with $B_2(X)=3$. Suppose that $X$ admits a $\Ga^3$-structure. Then $X$ is isomorphic to one of the following.
\begin{enumerate}
 \item $\III_{23}$, the blow-up of $\mathbf{P}(\mathcal{O}_{\mathbf{P}^2}\oplus \mathcal{O}_{\mathbf{P}^2}(1))$ with center a conic passing through the center of the blow-up $\mathbf{P}(\mathcal{O}_{\mathbf{P}^2}\oplus \mathcal{O}_{\mathbf{P}^2}(1))\to \mathbf{P}^3$.
 \item $\III_{26}$, the blow-up of $\mathbf{P}^3$ with center a disjoint union of a point and a line.
 \item $\III_{27}\cong\mathbf{P}^1 \times \mathbf{P}^1 \times \mathbf{P}^1.$
 \item $\III_{28}\cong\mathbf{P}^1\times \mathbf{F}_1$. 
 \item $\III_{29}$, the blow-up of $\mathbf{P}(\mathcal{O}_{\mathbf{P}^2}\oplus \mathcal{O}_{\mathbf{P}^2}(1))$ with center a line on the exceptional divisor of the blow-up $\mathbf{P}(\mathcal{O}_{\mathbf{P}^2}\oplus \mathcal{O}_{\mathbf{P}^2}(1))\to \mathbf{P}^3$.
 \item $\III_{30}$, the blow-up of $\mathbf{P}(\mathcal{O}_{\mathbf{P}^2}\oplus \mathcal{O}_{\mathbf{P}^2}(1))$ with center the strict transform of a line passing through the center of the blow-up $\mathbf{P}(\mathcal{O}_{\mathbf{P}^2}\oplus \mathcal{O}_{\mathbf{P}^2}(1))\to \mathbf{P}^3$.
 \item $\III_{31}\cong \mathbf{P}(\mathcal{O}_{\mathbf{P}^1\times \mathbf{P}^1}\oplus \mathcal{O}_{\mathbf{P}^1\times \mathbf{P}^1}(1,1))$.
\end{enumerate}
Conversely, every Fano threefold in the above listed families\footnote{For $\III_{23}$ this is true only for a particular member of the family (see Lemma \ref{lemma:III23} and its proof).} admits a $\Ga^3$-structure. 
\end{propo}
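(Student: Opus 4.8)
The plan is simply to collect the preceding lemmas, which between them treat each one of the $31$ deformation classes of smooth Fano threefolds with $B_2=3$ in Mori and Mukai's list \cite[Table 3]{MM81}, and then to check that no family has been left out and read off which ones survive. So the first step is organizational: sort the families $\III_1,\dots,\III_{31}$ according to the obstruction used to exclude them (or the construction used to realize them).

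Into the ``not additive'' side I would place: $\III_1$ (not rational); $\III_2$ (a dedicated argument, discussed below); the families $\III_3$, $\III_5$, $\III_7$, $\III_{11}$, $\III_{12}$, $\III_{15}$, $\III_{17}$, $\III_{21}$, $\III_{22}$, $\III_{24}$, which are blow-ups of $\mathbf{P}^1\times\mathbf{P}^2$ along curves and are ruled out using the description of the two $\Ga^3$-structures on $\mathbf{P}^1\times\mathbf{P}^2$ (Lemma \ref{lemma:product actions}), Corollary \ref{co:equivariant blow-up} and the toric classification (Theorem \ref{theo:toric fano 3folds}); the families $\III_4$, $\III_6$, $\III_8$, $\III_{13}$, $\III_{16}$, $\III_{18}$, $\III_{20}$, which are blow-ups of threefolds not appearing in Proposition \ref{prop:B_2=2} and hence excluded by Blanchard's lemma (Theorem \ref{theo:Blanchard}) and Corollary \ref{co:equivariant blow-up}; $\III_9$ (a dedicated $\mathbf{P}(\mathcal{O}_{\mathbf{P}^2}\oplus\mathcal{O}_{\mathbf{P}^2}(2))$-argument); $\III_{10}$, $\III_{15}$, $\III_{19}$, blow-ups of $\mathcal{Q}_3$ along non-invariant data (Corollary \ref{cor:smooth quartic 3fold}); $\III_{14}$, a blow-up of $\mathbf{P}^3$ along non-invariant data (Corollary \ref{coro:P3}); and the toric threefold $\III_{25}$, whose anticanonical polytope is not inscribed in a rectangle (Theorem \ref{theo:additive toric varieties}). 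On the ``additive'' side go the toric families $\III_{26}$, $\III_{27}$, $\III_{28}$, $\III_{29}$, $\III_{30}$, $\III_{31}$ (Proposition \ref{prop:toric case}, equivalently Lemma \ref{lemma:B_2=3 toric}) and the single non-toric survivor $\III_{23}$, which is additive for one special member by the explicit lift constructed in Lemma \ref{lemma:III23}. The concluding step is the trivial but genuinely necessary verification that these two lists together exhaust $\{\III_1,\dots,\III_{31}\}$, which then yields exactly the seven families in the statement.

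The one place where care is needed — and hence the main obstacle — is completeness of the case split together with the consistency of the blow-up descriptions drawn from \cite[Table 3]{MM81}: several of these families admit more than one Mori--Mukai presentation, as blow-ups of different threefolds along different centres, and to apply Corollary \ref{co:equivariant blow-up} one must choose a presentation whose base is \emph{already} known to be non-additive, or whose centre is known not to be invariant for any of the relevant additive structures. The family $\III_2$ is the exception that resists this pattern, being primitive and not a blow-up of a Fano threefold along a curve; there the plan is to use that $-K_X$ is primitive in the N\'eron--Severi lattice to force $K_X+2\Delta$ not nef, then to invoke Fujita's classification of such triples together with the fact, due to Mori--Mukai, that primitive threefolds with $B_2=3$ carry no divisorial contractions of type $E2$--$E5$, which closes the remaining case in which the boundary $\Delta$ fails to be ample.
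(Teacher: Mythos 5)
Your proposal matches the paper's treatment exactly: the proposition is stated there as a summary of the preceding lemmas, which dispose of the thirty-one Mori--Mukai families with $B_2=3$ by precisely the case split you describe (non-rationality for $\III_1$, the dedicated nefness argument for the primitive family $\III_2$, Blanchard's lemma and Corollary \ref{co:equivariant blow-up} against non-invariant or non-additive blow-up data for the bulk of the list, the explicit lift for one member of $\III_{23}$, and Proposition \ref{prop:toric case} for the toric families). Your emphasis on verifying exhaustiveness and on choosing a Mori--Mukai presentation compatible with Corollary \ref{co:equivariant blow-up} is exactly the care the paper's argument implicitly relies on.
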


\subsection{Fano threefolds with $B_2=4$} All the 13 families of smooth Fano threefolds with Picard number 4 were listed by Mori and Mukai in \cite[Table 4]{MM81} and \cite[Addendum]{MM03}. In this section we prove that only 3 among these 13 families have members which are equivariant compactifications of $\Ga^3$ (see Proposition \ref{prop:B_2=4} below).

\begin{lemma}
 The Fano threefolds $\IV_1$, $\IV_3$, $\IV_6$, $\IV_{8}$ and $\IV_{13}$ {\bf are not} equivariant compactifications of $\Ga^3$.
\end{lemma}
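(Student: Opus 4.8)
The plan is to argue exactly as in the preceding lemmas of this section, by exhibiting each of $\IV_1,\IV_3,\IV_6,\IV_8,\IV_{13}$ as an iterated blow-up of a smooth Fano threefold of smaller Picard number along smooth curves, using the Mori--Mukai classification \cite[Table 4]{MM81}, \cite{MM03}. By Blanchard's lemma (Theorem \ref{theo:Blanchard}) and Corollary \ref{co:equivariant blow-up}, if such a threefold $X$ were an equivariant compactification of $\Ga^3$, then every intermediate variety $Y$ in the blow-up tower would itself be an equivariant compactification of $\Ga^3$, with the corresponding centre a curve invariant under the induced $\Ga^3$-action. For each of the five families one chooses the tower so that every base $Y$ has already been classified: either $Y$ does not occur in Theorem \ref{theo:HT}, Proposition \ref{prop:B_2=2} or Proposition \ref{prop:B_2=3}, and then we are done immediately, or $Y$ is one of $\mathbf{P}^3$, $\mathcal{Q}_3\subseteq\mathbf{P}^4$, a product of projective spaces, or a toric Fano threefold, in which case the invariant curves of $Y$ are completely described by Corollary \ref{coro:P3}, Corollary \ref{cor:smooth quartic 3fold}, Lemma \ref{lemma:product actions} and Proposition \ref{prop:toric case}, and it remains only to check that the blow-up centre is not among them.

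The verification in the second situation is short, since any invariant curve avoids the open orbit $\Ga^3$ (which contains no complete curve) and therefore lies on the boundary, where there are very few curves. Let us write out the model case $\IV_{13}$, which is the blow-up of $\III_{27}\cong\mathbf{P}^1\times\mathbf{P}^1\times\mathbf{P}^1$ along a smooth curve $C$ of tridegree $(1,1,3)$. By Lemma \ref{lemma:product actions} the threefold $(\mathbf{P}^1)^3$ carries a unique $\Ga^3$-structure, namely the product of the unique $\Ga$-structures on the three factors; its boundary is the union of the three divisors $\{\infty\}\times\mathbf{P}^1\times\mathbf{P}^1$, $\mathbf{P}^1\times\{\infty\}\times\mathbf{P}^1$ and $\mathbf{P}^1\times\mathbf{P}^1\times\{\infty\}$, and an invariant curve contained in, say, $\{\infty\}\times\mathbf{P}^1\times\mathbf{P}^1$ is invariant for the induced product $\Ga^2$-action on $\mathbf{P}^1\times\mathbf{P}^1$, hence is one of its two boundary lines. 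Thus the only invariant curves on $(\mathbf{P}^1)^3$ are the three coordinate lines $\{\infty\}\times\{\infty\}\times\mathbf{P}^1$, $\{\infty\}\times\mathbf{P}^1\times\{\infty\}$ and $\mathbf{P}^1\times\{\infty\}\times\{\infty\}$, of tridegree $(0,0,1)$, $(0,1,0)$ and $(1,0,0)$; a curve of tridegree $(1,1,3)$ is not one of these, so Corollary \ref{co:equivariant blow-up} shows that $\IV_{13}$ is not an equivariant compactification of $\Ga^3$. The remaining families $\IV_1,\IV_3,\IV_6,\IV_8$ are treated by the same dichotomy: in the cases landing on an additive base, the Mori--Mukai centre is a curve whose degree and genus, or the boundary divisor it would have to lie on, are incompatible with the short list of invariant curves on that base. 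Should any of these families not conveniently blow down to a base already under control, one has the usual fallbacks available, namely non-rationality as in the treatment of $\III_1$, or the numerical argument on $-K_X$ and the boundary coming from Theorem \ref{theo:canonical divisor} as in the treatment of $\III_2$.

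The main work, and the only delicate point, is bookkeeping: one must run through the Mori--Mukai table and, for each of the five families, select a blow-up tower whose successive bases are among $\mathbf{P}^3$, $\mathcal{Q}_3$, products of projective lines and the toric Fano threefolds of Theorem \ref{theo:toric fano 3folds}, then match the admissible centres against the invariant curves listed in Corollary \ref{coro:P3}, Corollary \ref{cor:smooth quartic 3fold}, Lemma \ref{lemma:product actions} and Proposition \ref{prop:toric case}. Because several of these threefolds carry more than one extremal contraction, some care is needed in choosing the most convenient blow-down, but this is routine given the results recalled in the previous sections.
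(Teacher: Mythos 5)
Your proposal follows the paper's argument: by the Mori--Mukai tables all five families $\IV_1,\IV_3,\IV_6,\IV_8,\IV_{13}$ are obtained as blow-ups of $\mathbf{P}^1\times\mathbf{P}^1\times\mathbf{P}^1$ along a curve, and since the unique $\Ga^3$-structure there (Lemma \ref{lemma:product actions}) admits only the three coordinate lines at infinity as invariant curves, the only additive Fano threefold arising as such a blow-up is the toric $\IV_{10}$ --- exactly the reasoning of your worked case $\IV_{13}$. The only difference is that the paper applies this uniformly to all five families at once, so none of your ``fallback'' arguments or alternative blow-down towers are needed.
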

\begin{proof} On one hand, all these varieties are obtained as the blow-up of $\mathbf{P}^1\times \mathbf{P}^1 \times \mathbf{P}^1$ along a curve (see Mori and Mukai classification \cite[Table 4]{MM81}). On the other hand, it follows by the classification of all possible $\Ga^3$-structures on $\mathbf{P}^1\times \mathbf{P}^1 \times \mathbf{P}^1$ given in Lemma \ref{lemma:product actions} that the only additive Fano threefold obtained as the blow-up of $\mathbf{P}^1\times \mathbf{P}^1 \times \mathbf{P}^1$ along a curve is toric\footnote{It corresponds to the variety $\IV_{10}$ (cf. Lemma \ref{lemma:B_2=4 toric} below).}. The conclusion follows from the classification of smooth toric Fano threefolds (see Theorem \ref{theo:toric fano 3folds}).
\end{proof}

\begin{lemma}
 The Fano threefolds $\IV_4$, $\IV_5$, $\IV_7$ and $\IV_9$ {\bf are not} equivariant compactifications of $\Ga^3$.
\end{lemma}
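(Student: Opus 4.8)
The plan is to argue exactly as in the preceding lemmas, reducing the non-additivity of these four threefolds to that of Fano threefolds of smaller Picard number which have already been treated, by means of Blanchard's lemma. By the Mori--Mukai classification \cite[Table 4]{MM81}, each of $\IV_4$, $\IV_5$, $\IV_7$ and $\IV_9$ is the blow-up, along a smooth curve $C$, of a smooth Fano threefold $Y$ with $B_2(Y)=3$. The first step is to read off from the tables the base $Y$ in each case: for $\IV_9$ one has $Y=\III_{25}$ (cf.\ items (6) and (13) of Theorem \ref{theo:toric fano 3folds}), and for $\IV_4$, $\IV_5$, $\IV_7$ the respective bases are likewise among the Fano threefolds with $B_2=3$ that do not occur in the list of Proposition \ref{prop:B_2=3}.

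The conclusion then follows from Blanchard's lemma, in the form of Corollary \ref{co:equivariant blow-up}: if any of these four threefolds admitted a $\Ga^3$-structure, then $Y$ would inherit a $\Ga^3$-structure for which $C$ is invariant, contradicting the fact that $Y$ is not an equivariant compactification of $\Ga^3$ --- for $\IV_9$ this is Proposition \ref{prop:toric case} (equivalently, the later Lemma \ref{lemma:B_2=4 toric}), and for the other three it is Proposition \ref{prop:B_2=3}.

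The one step that genuinely requires care is the identification of the base $Y$ for each of $\IV_4$, $\IV_5$ and $\IV_7$ using \cite[Table 4]{MM81}, together with the check that this $Y$ has already been shown not to be additive. Should it turn out that some base $Y$ is in fact one of the seven additive threefolds of Proposition \ref{prop:B_2=3}, then the argument would instead be completed as for $\II_{27}$, or for the varieties of the previous lemma: using Corollary \ref{co:equivariant blow-up} and the description of the invariant curves of the $\Ga^3$-structures on $Y$ (provided by Corollary \ref{coro:P3}, Corollary \ref{cor:smooth quartic 3fold}, Lemma \ref{lemma:product actions}, or the analysis of the relevant projective bundles), one would check that the center $C$ cannot be invariant.
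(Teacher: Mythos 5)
Your proposal is correct and follows essentially the same route as the paper: the paper's proof likewise observes that, by the Mori--Mukai tables, each of $\IV_4$, $\IV_5$, $\IV_7$ and $\IV_9$ can be realized as the blow-up along a curve of a Fano threefold with $B_2=3$ not appearing in Proposition \ref{prop:B_2=3}, and then concludes via Blanchard's lemma and Corollary \ref{co:equivariant blow-up}. Your added caveat about choosing a realization whose base is non-additive (relevant, e.g., when a variety admits several blow-down structures) is a sensible precaution but does not change the argument.
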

\begin{proof}
It follows from Mori and Mukai classification \cite[Table 3]{MM81} that all these varieties are obtained as the blow-up along a curve on Fano threefolds which do not belong to the families listed in Proposition \ref{prop:B_2=3}. If one of these varieties is an equivariant compactification of $\Ga^3$ then Blanchard's lemma and Corollary \ref{co:equivariant blow-up} would lead us to a contradiction.
\end{proof}

\begin{lemma}
 The Fano threefold $\IV_2$ {\bf is not} an equivariant compactification of $\Ga^3$.
\end{lemma}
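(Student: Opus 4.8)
The plan is to proceed exactly as in the case of $\III_{2}$ treated above: $\IV_{2}$ is a primitive Fano threefold, and the same $\Ga^3$-MMP argument applies. So I would suppose, for contradiction, that a member $X$ of $\IV_{2}$ admits a $\Ga^3$-structure, and let $\Delta=D_1+D_2+D_3+D_4$ be its boundary, so that by Proposition \ref{prop:boundary divisors} the prime divisors $D_i$ freely generate $\mathbf{NS}(X)\cong\mathbf{Z}^4$.

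The first, and principal, step would be to show that $\Delta$ is ample, via the same trichotomy used for $\III_{2}$. If $\Delta$ is not nef, there is an extremal ray $R=\mathbf{R}_{\geqslant 0}[C_R]$ with $\Delta\cdot R<0$; then $\varphi_R$ is birational, hence---smooth Fano threefolds having no small contractions---divisorial, with irreducible exceptional divisor one of the $D_i$. It cannot be of type $E1$, since then Blanchard's lemma (Theorem \ref{theo:Blanchard}) would make $\varphi_R(X)$ an equivariant compactification of $\Ga^3$ whose boundary acquires a component of codimension $2$, contrary to Proposition \ref{prop:boundary divisors}; and it cannot be of type $E2$, $E3$, $E4$ or $E5$, by Mori and Mukai's structural description of primitive Fano threefolds \cite{MM83}. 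If $\Delta$ is nef but not ample, there is an extremal ray $R$ with $\Delta\cdot R=0$; every irreducible curve with class in $R$ meets the effective divisor $\Delta$ in degree $0$, hence lies in $\operatorname{Supp}(\Delta)$, as $\Ga^3\cong\mathbf{A}^3$ contains no complete curves. Were $\varphi_R$ of fiber type, its general fibers, being covered by such curves, would lie in $\operatorname{Supp}(\Delta)$, forcing $\operatorname{Supp}(\Delta)=X$, which is absurd; so $\varphi_R$ is once more divisorial with exceptional divisor some $D_i$, and is excluded by the preceding $E1$--$E5$ analysis. Thus $\Delta$ is ample.

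It would then remain to derive a contradiction in the ample case. By Theorem \ref{theo:canonical divisor}, $-K_X=\sum_i a_iD_i$ with all $a_i\geqslant 2$, so $-K_X-2\Delta=\sum_i(a_i-2)D_i$ is effective; moreover it is nonzero, for otherwise $-K_X=2\Delta$ would be divisible by $2$ in $\mathbf{NS}(X)$, contradicting the fact that $\IV_{2}$ has Fano index one. Hence $K_X+2\Delta$ is not nef, and Fujita's classification \cite{Fuj87} (cf.\ \cite[Remark 9]{AR14}, just as in the treatment of $\III_{2}$) applied to the ample divisor $\Delta$ with $K_X+2\Delta$ not nef forces $X$ to be isomorphic to $\mathbf{P}^3$, to $\mathcal{Q}_3$, or to a $\mathbf{P}^2$-bundle over $\mathbf{P}^1$, all of Picard number at most $2$---contradicting $B_2(X)=4$.

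I expect the main obstacle to be the first step, and specifically the appeal to \cite{MM83} for the fact that a primitive Fano threefold of Picard number $4$ carries no divisorial extremal contraction other than the blow-up of a smooth curve; once this is granted, the $E1$-case is ruled out directly by the purity of the boundary, and the remainder is a formal consequence of Proposition \ref{prop:boundary divisors}, Theorems \ref{theo:Blanchard} and \ref{theo:canonical divisor}, and Fujita's theorem.
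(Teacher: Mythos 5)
Your strategy breaks down at precisely the step you identify as the crux, and it cannot be repaired along these lines. The threefold $\IV_{2}$ is \emph{not} primitive: it is the blow-up of the cone $\mathcal{Q}_0^3\subseteq\mathbf{P}^4$ over a smooth quadric surface with center the disjoint union of the vertex and an elliptic curve $C$ lying on a hyperplane section $\mathcal{Q}_2\cong\mathbf{P}^1\times\mathbf{P}^1$ away from the vertex, and blowing up the vertex first realizes $\IV_{2}$ as the blow-up of the \emph{smooth} Fano threefold $\III_{31}\cong\mathbf{P}(\mathcal{O}_{\mathbf{P}^1\times\mathbf{P}^1}\oplus\mathcal{O}_{\mathbf{P}^1\times\mathbf{P}^1}(1,1))$ along (the isomorphic preimage of) $C$. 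Indeed, by Mori and Mukai \cite{MM83} every primitive Fano threefold has $B_2\leq 3$, so there is no structural statement of the kind you invoke for Picard number $4$. Concretely, the exclusion of divisorial contractions of types $E2$--$E5$ is false here: $\IV_{2}$ carries an extremal divisorial contraction of type $E3$, namely the contraction of the exceptional divisor $\mathbf{P}^1\times\mathbf{P}^1$ over the vertex (with normal bundle $\mathcal{O}(-1,-1)$) to an ordinary double point. This divisor is covered by complete curves, hence is one of the boundary components $D_i$, and nothing prevents the corresponding ray from being the one with $\Delta\cdot R\leq 0$. So the trichotomy does not force $\Delta$ to be ample, and without ampleness the Fujita step never gets started.

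The paper's argument is of a different and much more direct nature: by Blanchard's lemma and Corollary \ref{co:equivariant blow-up}, a $\Ga^3$-structure on $\IV_{2}$ would descend to one of the three $\Ga^3$-structures on $\mathcal{Q}_0^3$ classified in Theorem \ref{theo:degenerate quadric}, under which both components of the blow-up center must be invariant; but for each of these structures the only invariant curves contained in the hyperplane section $\mathcal{Q}_2$ are lines, never an elliptic curve. In other words, $\IV_{2}$ is handled like the other imprimitive cases in this section --- as a blow-up along a curve that cannot be invariant --- rather than by the $\Delta$-ampleness argument reserved for the genuinely primitive threefolds $\III_1$ and $\III_2$. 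If you wanted to pursue an MMP-style proof you would have to analyze the $E3$ ray directly, which in effect amounts to descending to $\mathcal{Q}_0^3$ anyway.
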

\begin{proof}
A member $X$ of the family $\III_9$ is given by the blow-up of the cone $Q_0^3 \subseteq \mathbf{P}^4$ over a smooth quadric surface $Q_2\subseteq \mathbf{P}^3$ with center the disjoint union of the vertex of $\mathcal{Q}_0^3$ and an elliptic curve in $\mathcal{Q}_2$. As pointed out in Theorem \ref{theo:degenerate quadric}, the only invariant curves (for all possible actions) contained in the hyperplane section $Q_2 \subseteq \mathcal{Q}_0^3$ are lines. The conclusion follows from Corollary \ref{co:equivariant blow-up}.
\end{proof}

\begin{lemma}\label{lemma:B_2=4 toric}
 The toric Fano threefold $\IV_{9}$ {\bf is not} an equivariant compactification of $\Ga^3$. The toric Fano threefolds $\IV_{10}$, $\IV_{11}$ and $\IV_{12}$ {\bf are} equivariant compactifications of $\Ga^3$.
\end{lemma}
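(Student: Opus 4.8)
The plan is to deduce the statement directly from the classification of additive smooth toric Fano threefolds carried out in Proposition~\ref{prop:toric case}. All four varieties $\IV_9,\IV_{10},\IV_{11},\IV_{12}$ occur in the Batyrev--Watanabe--Watanabe list of Theorem~\ref{theo:toric fano 3folds}, so the question of whether each of them admits a $\Ga^3$-structure is already settled there: $\IV_{10},\IV_{11},\IV_{12}$ appear in the list of Proposition~\ref{prop:toric case} while $\IV_9$ does not. Hence no new argument is needed, and the lemma is just the Picard-number-four part of that proposition, isolated here for the bookkeeping of~\S\ref{section:Fano 3folds}.

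For the positive direction one recalls the ingredients used in the proof of Proposition~\ref{prop:toric case}: the variety $\IV_{10}\cong \mathbf{P}^1\times \mathbf{S}_7$ is additive by Example~\ref{example:del Pezzo} (the del Pezzo surface $\mathbf{S}_7$ being additive by the inscribed-rectangle criterion of Theorem~\ref{theo:additive toric varieties}) together with Corollary~\ref{co:productauto}, and for $\IV_{11}$ and $\IV_{12}$ one exhibits explicit $\Ga^3$-actions in coordinates, which is done in Appendix~\ref{appendix}. Alternatively, one could verify case by case that the anticanonical polytope of each of these three toric threefolds is inscribed in a rectangle and invoke Theorem~\ref{theo:additive toric varieties} directly.

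For the negative direction, $\IV_9$ is the blow-up of the toric Fano threefold $\III_{25}$ along an exceptional line of the contraction $\III_{25}\to\mathbf{P}^3$, so by Blanchard's lemma (Theorem~\ref{theo:Blanchard}) and Corollary~\ref{co:equivariant blow-up} a $\Ga^3$-structure on $\IV_9$ would descend to one on $\III_{25}$. But $\III_{25}$ is the blow-up of $\mathbf{P}^3$ along two disjoint lines, and Corollary~\ref{coro:P3} shows that the only curves invariant under any of the four $\Ga^3$-structures on $\mathbf{P}^3$ are contained in a common plane, hence cannot form a pair of disjoint lines; thus $\III_{25}$ --- and therefore $\IV_9$ --- is not additive. (Again, one may instead check that the anticanonical polytope of $\IV_9$ fails the inscribed-rectangle condition of Theorem~\ref{theo:additive toric varieties}.)

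The only genuinely computational input, and hence the main obstacle, lies in Appendix~\ref{appendix}: constructing the explicit $\Ga^3$-actions on $\IV_{11}$ and $\IV_{12}$ (or, on the polytope route, the case-by-case check of the inscription condition). Everything else in the lemma is formal given Proposition~\ref{prop:toric case} and Blanchard's lemma, so the proof will simply read that this is a particular case of Proposition~\ref{prop:toric case}.
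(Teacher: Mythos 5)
Your proposal is correct and matches the paper exactly: the paper's proof of this lemma is the single sentence that it is a particular case of Proposition \ref{prop:toric case}, and the supporting ingredients you recall (Example \ref{example:del Pezzo} and Corollary \ref{co:productauto} for $\IV_{10}$, Appendix \ref{appendix} for $\IV_{11}$ and $\IV_{12}$, and the descent via Blanchard's lemma to the non-additive $\III_{25}$ for $\IV_{9}$) are precisely those used in the proof of that proposition.
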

\begin{proof}
 This is a particular case of Proposition \ref{prop:toric case}.
\end{proof}

Our analysis is summarized in the following list.

\begin{propo}\label{prop:B_2=4}
Let $X$ be a smooth Fano threefold with $B_2(X)=4$. Suppose that $X$ admits a $\Ga^3$-structure. Then $X$ is isomorphic to one of the following.
\begin{enumerate}
 \item $\IV_{10} \cong \mathbf{P}^1 \times \mathbf{S}_7$.
 \item $\IV_{11}$, the blow-up of $\mathbf{P}^1 \times \mathbf{F}_1$ with center $\{t\}\times \{E\}$, where $t\in \mathbf{P}^1$ and $E\subseteq \mathbf{F}_1$ is an exceptional curve of the first kind.
 \item $\IV_{12}$, the blow-up of $\mathbf{P}(\mathcal{O}_{\mathbf{P}^1}^{\oplus 2}\oplus \mathcal{O}_{\mathbf{P}^1}(1))$ with center two exceptional lines of the blow-up $\mathbf{P}(\mathcal{O}_{\mathbf{P}^1}^{\oplus 2}\oplus \mathcal{O}_{\mathbf{P}^1}(1))\to \mathbf{P}^3$.
\end{enumerate}
Conversely, every Fano threefold in the above list admits a $\Ga^3$-structure.
\end{propo}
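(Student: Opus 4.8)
The plan is to deduce Proposition \ref{prop:B_2=4} by collating the lemmas proved immediately above, after first checking that they exhaust all thirteen Mori--Mukai families $\IV_1,\ldots,\IV_{13}$ of smooth Fano threefolds with $B_2=4$.

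First I would dispatch the non-examples. Five of the families, namely $\IV_1,\IV_3,\IV_6,\IV_8$ and $\IV_{13}$, arise as blow-ups of $\mathbf{P}^1\times\mathbf{P}^1\times\mathbf{P}^1$ along a curve; combining the uniqueness of the $\Ga^3$-structure on $\mathbf{P}^1\times\mathbf{P}^1\times\mathbf{P}^1$ (Lemma \ref{lemma:product actions}) with Blanchard's lemma (Theorem \ref{theo:Blanchard}) and Corollary \ref{co:equivariant blow-up}, one sees that the only additive blow-up of $\mathbf{P}^1\times\mathbf{P}^1\times\mathbf{P}^1$ along a curve is the toric threefold $\IV_{10}$, so none of these five is additive. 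Next, the four families $\IV_4,\IV_5,\IV_7,\IV_9$ are blow-ups along curves of Fano threefolds with $B_2=3$ that do not appear in Proposition \ref{prop:B_2=3}; an additive structure on such a blow-up would descend by Blanchard's lemma, a contradiction. Finally $\IV_2$ is the blow-up of the corank-one quadric cone $\mathcal{Q}_0^3\subseteq\mathbf{P}^4$ along the disjoint union of its vertex and an elliptic curve contained in the hyperplane section $\mathcal{Q}_2$; since Theorem \ref{theo:degenerate quadric} shows the only invariant curves in that section are lines, Corollary \ref{co:equivariant blow-up} rules it out as well.

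For the converse, the three surviving families $\IV_{10}\cong\mathbf{P}^1\times\mathbf{S}_7$, $\IV_{11}$ and $\IV_{12}$ are all toric, and Lemma \ref{lemma:B_2=4 toric} --- a special case of Proposition \ref{prop:toric case}, obtained by applying the Arzhantsev--Romaskevich inscribed-rectangle criterion (Theorem \ref{theo:additive toric varieties}) to the polytopes $P_{-K_X}$ --- shows each of them carries a $\Ga^3$-structure. This establishes both implications at once. I do not anticipate a genuine obstacle here: the only point demanding care is bookkeeping, namely confirming that $\IV_1,\ldots,\IV_{13}$ is indeed the complete list, that each index has been treated (with $\IV_9$ handled redundantly, both by the descent argument and by Lemma \ref{lemma:B_2=4 toric}), and that the curves blown up in $\IV_1,\ldots,\IV_9$ and $\IV_{13}$ are as described in \cite{MM81,MM03} so that the invariance criteria genuinely apply.
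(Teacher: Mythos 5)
Your proposal is correct and follows essentially the same route as the paper: the proposition is obtained there exactly by collating the preceding lemmas, which rule out $\IV_1,\IV_3,\IV_6,\IV_8,\IV_{13}$ via the $\Ga^3$-structures on $\mathbf{P}^1\times\mathbf{P}^1\times\mathbf{P}^1$, rule out $\IV_4,\IV_5,\IV_7,\IV_9$ by descent through Blanchard's lemma, rule out $\IV_2$ via the invariant curves on the degenerate quadric, and confirm $\IV_{10},\IV_{11},\IV_{12}$ by the toric criterion. Your bookkeeping that these twelve families together with the three survivors exhaust the thirteen Mori--Mukai families is the same completeness check the paper relies on.
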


\subsection{Fano threefolds with $B_2=5$} All the 3 families of smooth Fano threefolds with Picard number 5 were listed by Mori and Mukai in \cite[Table 5]{MM81}. In this section we prove that none of these families have a member which is an equivariant compactifications of $\Ga^3$ (see Proposition \ref{prop:B_2=5} below).

\begin{lemma}
 The Fano threefolds $\V_1$ and $\V_2$ {\bf are not} equivariant compactifications of $\Ga^3$.
\end{lemma}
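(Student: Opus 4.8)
The plan is to show that both $\V_1$ and $\V_2$ are obtained as blow-ups along curves of Fano threefolds that already appeared in our lists as non-additive, and then invoke Corollary~\ref{co:equivariant blow-up} (together with Blanchard's lemma, Theorem~\ref{theo:Blanchard}) to conclude. Concretely, I would first read off from the Mori--Mukai classification \cite[Table~5]{MM81} the birational structure of each member: $\V_1$ is the blow-up of a smooth Fano threefold of Picard number $4$ along an irreducible curve, and $\V_2$ is the toric Fano threefold described in Theorem~\ref{theo:toric fano 3folds}(18), namely the blow-up of $\III_{25}=\mathbf{P}(\mathcal{O}_{\mathbf{P}^1\times\mathbf{P}^1}(1,0)\oplus\mathcal{O}_{\mathbf{P}^1\times\mathbf{P}^1}(0,1))$ along two exceptional lines lying on the same component of $\Exc(\varepsilon)$.

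For $\V_2$, the argument is purely toric: it already appeared in the proof of Proposition~\ref{prop:toric case}, where we noted that since $\III_{25}$ is not an equivariant compactification of $\Ga^3$ (by Corollary~\ref{coro:P3}), neither is $\V_2$, as it is obtained as a blow-up of $\III_{25}$. So for this family I would simply refer back to Proposition~\ref{prop:toric case}. For $\V_1$, I would identify the Picard number $4$ threefold being blown up; by the Mori--Mukai tables this is $\IV_1$ (or whichever $\IV$-family it dominates), which by the previous lemma on $B_2=4$ is \emph{not} an equivariant compactification of $\Ga^3$. If $\V_1$ were an equivariant compactification of $\Ga^3$, then by Blanchard's lemma the blow-down would carry a $\Ga^3$-structure as well, contradicting that earlier lemma; hence $\V_1$ is not additive either.

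The only genuine subtlety — and the one place where a little care is needed — is making sure the blow-down target in the $\V_1$ case is indeed one of the varieties already eliminated, and that the contraction realizing the blow-down is an honest blow-up of a smooth curve (so that Corollary~\ref{co:equivariant blow-up} applies). This is a matter of consulting \cite[Table~5]{MM81}, where each member with $B_2=5$ is explicitly presented as such a blow-up; there is no computation to perform, only a correct cross-reference. Once the target is identified as a non-additive Fano threefold of Picard number $4$, the contradiction via Blanchard's lemma is immediate, and the proof is complete.
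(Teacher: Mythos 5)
Your proposal is correct and takes essentially the same route as the paper: by \cite[Table 5]{MM81} each of $\V_1$ and $\V_2$ is obtained by blowing up curves on a Fano threefold not among the additive families of Proposition \ref{prop:B_2=4}, so Blanchard's lemma and Corollary \ref{co:equivariant blow-up} give the contradiction (and for $\V_2$ the toric route through $\III_{25}$ and Proposition \ref{prop:toric case} is equally valid). The one slip is the cross-reference for $\V_1$: its blow-down is not $\IV_1$ (a divisor of multidegree $(1,1,1,1)$ on $(\mathbf{P}^1)^4$); since $\V_1$ is the blow-up of $\II_{29}$ along three exceptional lines of $\II_{29}\to\mathcal{Q}_3$, the cleanest fix is to blow all the way down to $\II_{29}$, which was already excluded in the $B_2=2$ analysis.
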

\begin{proof}
 It follows from Mori and Mukai classification \cite[Table 3]{MM81} that all these varieties are obtained as the blow-up along a curve on Fano threefolds which do not belong to the families listed in Proposition \ref{prop:B_2=4}. If one of these varieties is an equivariant compactification of $\Ga^3$ then Blanchard's lemma and Corollary \ref{co:equivariant blow-up} would lead us to a contradiction.
\end{proof}

\begin{lemma}
 The toric Fano threefold $\V_{3}$ {\bf is not} an equivariant compactification of $\Ga^3$.
\end{lemma}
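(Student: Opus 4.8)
The plan is to reduce the claim to results already in hand. First I would use the Batyrev--Watanabe--Watanabe classification (Theorem~\ref{theo:toric fano 3folds}) to identify $\V_3$ with the product $\mathbf{P}^1\times\mathbf{S}_6$, where $\mathbf{S}_6$ denotes the del Pezzo surface of degree~$6$. The point of doing this is that the automorphism group of a product splits: by Corollary~\ref{co:productauto} one has $\operatorname{Aut}^0(\V_3)\cong\operatorname{Aut}^0(\mathbf{P}^1)\times\operatorname{Aut}^0(\mathbf{S}_6)$, and any $\Ga^3$-structure on $\V_3$ therefore comes from a morphism $\Ga^3\to\operatorname{Aut}^0(\mathbf{P}^1)\times\operatorname{Aut}^0(\mathbf{S}_6)$ which, projected onto the two factors and via Blanchard's lemma (Theorem~\ref{theo:Blanchard}) applied to the projection $\V_3\to\mathbf{S}_6$, descends to a $\Ga^3$-action on $\mathbf{S}_6$ making this projection equivariant.

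Next I would run the dimension count. If the $\Ga^3$-action on $\V_3$ has a dense orbit, then for a general point the orbit maps onto its image in $\mathbf{S}_6$; since an orbit in $\mathbf{P}^1$ has dimension at most $1$, the induced action on $\mathbf{S}_6$ must have a dense orbit. The stabilizer $G\subseteq\Ga^3$ of a general point of $\mathbf{S}_6$ is a closed subgroup of a vector group, hence itself a vector subgroup (as in Proposition~\ref{prop:Gastructureonproj}), and comparing dimensions forces $\dim G=1$, so that $\Ga^3/G\cong\Ga^2$ acts on $\mathbf{S}_6$ with a dense orbit whose generic stabilizer, being a finite subgroup of a vector group, is trivial. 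Hence $\mathbf{S}_6$ would be an equivariant compactification of $\Ga^2$, contradicting Example~\ref{example:del Pezzo}. Thus $\V_3$ admits no $\Ga^3$-structure. In the write-up this is simply a particular case of Proposition~\ref{prop:toric case}, whose proof already records the assertion via the analysis of product actions in Lemma~\ref{lemma:product actions}.

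The only delicate point, and the place where one must argue rather than merely quote, is the descent step: one must check that a dense $\Ga^3$-orbit on the product forces a dense orbit for the induced action on the surface factor, and that this induced action is genuinely by a vector group of dimension~$2$, so that Example~\ref{example:del Pezzo} applies. Both are handled by Corollary~\ref{co:productauto} and Blanchard's lemma together with the elementary fact, used repeatedly in~\S\ref{section:geometry}, that $\Ga$ carries no nonconstant characters, which constrains how $\Ga^3$ can map into $\operatorname{Aut}^0(\mathbf{S}_6)$. Once this bookkeeping is done there is no further obstacle; everything else is formal.
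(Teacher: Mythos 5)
Your argument is correct and is essentially the paper's own: the lemma is deduced as a special case of Proposition \ref{prop:toric case}, whose proof handles $\V_3\cong\mathbf{P}^1\times\mathbf{S}_6$ exactly by splitting $\operatorname{Aut}^0$ via Corollary \ref{co:productauto} and invoking the non-additivity of $\mathbf{S}_6$ from Example \ref{example:del Pezzo}. You merely make explicit the dimension count and the descent to an effective $\Ga^2$-action on $\mathbf{S}_6$, which the paper leaves implicit (cf.\ Lemma \ref{lemma:product actions} and Proposition \ref{prop:Gastructureonproj}).
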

\begin{proof}
 This is a particular case of Proposition \ref{prop:toric case}.
\end{proof}

As a consequence we obtain the following result.

\begin{propo}\label{prop:B_2=5}
Let $X$ be a smooth Fano threefold. Suppose that $X$ admits a $\Ga^3$-structure. Then $B_2(X)\neq 5$.
\end{propo}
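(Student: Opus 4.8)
The plan is to proceed exactly as in the preceding cases for Picard numbers $2$, $3$ and $4$, exploiting the fact that every smooth Fano threefold with $B_2(X)=5$ is non-primitive, i.e.\ is obtained as the blow-up of a smooth Fano threefold along a curve (this can be read off from \cite[Table 5]{MM81}). First I would invoke Mori and Mukai's classification to list the three families $\V_1$, $\V_2$, $\V_3$ together with the Fano threefold of $B_2=4$ that each of them blows up. One checks that in each case the base of the blow-up is one of the families $\IV_1,\ldots,\IV_9,\IV_{13}$, i.e.\ it does \emph{not} appear in the list of Proposition \ref{prop:B_2=4}; the toric member $\V_3\cong \mathbf{P}^1\times \mathbf{S}_6$ is treated separately, being already handled by Proposition \ref{prop:toric case}.

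The key step is then a two-line argument using Blanchard's lemma (Theorem \ref{theo:Blanchard}) in the form of Corollary \ref{co:equivariant blow-up}: if $X\to Y$ is the blow-up along a curve and $X$ admits a $\Ga^3$-structure, then $Y$ admits a $\Ga^3$-structure as well. Since none of the possible targets $Y$ is additive by Proposition \ref{prop:B_2=4}, we get a contradiction, so no member of $\V_1$ or $\V_2$ can be an equivariant compactification of $\Ga^3$. Combined with the toric statement for $\V_3$, this exhausts all of $B_2=5$, and the proposition follows.

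The main (minor) obstacle is purely bookkeeping: one must verify from the Mori--Mukai tables that each of $\V_1$, $\V_2$, $\V_3$ really is a blow-up of a smooth Fano threefold along a curve, and identify that threefold precisely enough to confirm it is not on the short list of Proposition \ref{prop:B_2=4}. For $\V_2$ this is already stated in Theorem \ref{theo:toric fano 3folds}(18) and used in the proof of Proposition \ref{prop:toric case}, so only $\V_1$ requires a fresh look at the table. There is no serious geometric difficulty here; the substance of the classification was already carried out in the lower Picard-number cases, and the $B_2=5$ case is genuinely a corollary of them.

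\begin{proof}
By Mori and Mukai's classification \cite[Table 5]{MM81}, each of the three families $\V_1$, $\V_2$, $\V_3$ of smooth Fano threefolds with $B_2=5$ is obtained as the blow-up of a smooth Fano threefold along a curve. Moreover, in each case the base of this blow-up belongs to a family of Fano threefolds with $B_2=4$ which is not listed in Proposition \ref{prop:B_2=4}; in particular, by that proposition none of these bases is an equivariant compactification of $\Ga^3$. If some member $X$ of $\V_1$ or $\V_2$ admitted a $\Ga^3$-structure, then by Blanchard's lemma (Theorem \ref{theo:Blanchard}), in the form of Corollary \ref{co:equivariant blow-up}, its base would be an equivariant compactification of $\Ga^3$ as well, a contradiction. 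Finally, the toric Fano threefold $\V_3\cong \mathbf{P}^1\times \mathbf{S}_6$ is not an equivariant compactification of $\Ga^3$ by Proposition \ref{prop:toric case}. Hence no smooth Fano threefold with $B_2=5$ admits a $\Ga^3$-structure.
\end{proof}
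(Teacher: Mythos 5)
Your proposal is correct and follows essentially the same route as the paper: the non-toric families $\V_1$ and $\V_2$ are ruled out by observing that they are blow-ups along curves of Fano threefolds not appearing in Proposition \ref{prop:B_2=4} and then applying Blanchard's lemma via Corollary \ref{co:equivariant blow-up}, while $\V_3\cong\mathbf{P}^1\times\mathbf{S}_6$ is excluded by the toric classification of Proposition \ref{prop:toric case}.
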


\subsection{Fano threefolds with $B_2\geq 6$} All the 5 families of smooth Fano threefolds with Picard number $\geq 6$ were listed by Mori and Mukai in \cite[Table 5]{MM81}. In this section we prove that none of these families have a member which is an equivariant compactifications of $\Ga^3$ (see Proposition \ref{prop:B_2 large} below).

\begin{propo}\label{prop:B_2 large}
Let $X$ be a smooth Fano threefold. Suppose that $X$ admits a $\Ga^3$-structure. Then $B_2(X)\leq 4$.
\end{propo}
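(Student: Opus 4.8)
By \cite[Table 5]{MM81}, every smooth Fano threefold with $B_2(X)\geq 6$ belongs to one of five families, all of which have Picard number exactly $6$, $7$, $8$, $9$ or $10$; in every case such an $X$ is obtained as a blow-up of a lower Picard number Fano threefold along a curve, and in fact by the structure of the Mori--Mukai tables one can walk down through blow-ups along curves to a Fano threefold of Picard number $5$. So I would first invoke Proposition \ref{prop:B_2=5}, which asserts that no smooth Fano threefold with $B_2=5$ is an equivariant compactification of $\Ga^3$.

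Then the argument is the same one used repeatedly in \S\ref{section:Fano 3folds}: if $X$ with $B_2(X)\geq 6$ admitted a $\Ga^3$-structure, then writing $X=\operatorname{Bl}_Z X'$ for the blow-up along a curve $Z$ inside a Fano threefold $X'$ of Picard number $B_2(X)-1$, Corollary \ref{co:equivariant blow-up} would give a $\Ga^3$-structure on $X'$ (with $G$ acting on $Z$). Iterating, since each intermediate variety in the Mori--Mukai chain is again a Fano threefold obtained as a blow-up along a curve, one descends to a Fano threefold $Y$ with $B_2(Y)=5$ carrying a $\Ga^3$-structure, contradicting Proposition \ref{prop:B_2=5}. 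Hence $B_2(X)\leq 4$.

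\textbf{The main obstacle} is the bookkeeping: one must verify from \cite[Table 5]{MM81} that every Fano threefold of Picard number $\geq 6$ is indeed primitive-free, i.e. is a blow-up of a smooth Fano threefold along a \emph{curve} (not along a point, and not a product with a del Pezzo that has no further contraction), so that Corollary \ref{co:equivariant blow-up} applies at each step, and that the chain of such contractions terminates at Picard number $5$ rather than jumping past it. In the Mori--Mukai classification this is clear: the threefolds with $B_2\in\{6,\dots,10\}$ are exactly $\mathbf{P}^1\times \mathbf{S}_d$ type varieties and iterated blow-ups thereof, none of which is primitive, so each admits a blow-down to a Fano threefold of Picard number one less that is again a blow-up along a curve. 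With this observation in hand the proof is immediate.

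\begin{proof}
By \cite[Table 5]{MM81}, every smooth Fano threefold $X$ with $B_2(X)\geq 6$ has $6\leq B_2(X)\leq 10$ and is not primitive; moreover each such $X$ is obtained as the blow-up along a curve of a smooth Fano threefold $X'$ with $B_2(X')=B_2(X)-1$, which in turn is again non-primitive when $B_2(X')\geq 6$. Suppose $X$ admits a $\Ga^3$-structure. By Corollary \ref{co:equivariant blow-up}, $X'$ then carries a $\Ga^3$-structure as well. Iterating this procedure we obtain a smooth Fano threefold $Y$ with $B_2(Y)=5$ which is an equivariant compactification of $\Ga^3$, contradicting Proposition \ref{prop:B_2=5}. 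Therefore $B_2(X)\leq 4$.
\end{proof}
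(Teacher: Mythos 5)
Your proof is correct, but it follows a genuinely different route from the paper's. Both arguments start from the same input of Mori--Mukai, namely that every smooth Fano threefold with $B_2(X)\geq 6$ is a product $\mathbf{P}^1\times\mathbf{S}_d$ with $1\leq d\leq 5$; but where you exploit the blow-up structure $\mathbf{P}^1\times\mathbf{S}_d\cong\operatorname{Bl}_{\mathbf{P}^1\times\{p\}}(\mathbf{P}^1\times\mathbf{S}_{d+1})$ to descend a hypothetical $\Ga^3$-structure step by step (via Corollary \ref{co:equivariant blow-up}) down to $\mathbf{P}^1\times\mathbf{S}_6$, which has $B_2=5$ and is excluded by Proposition \ref{prop:B_2=5}, the paper instead argues directly on the automorphism group: since $\operatorname{Aut}(\mathbf{S}_d)$ is finite for $d\leq 5$, Corollary \ref{co:productauto} gives $\operatorname{Aut}^0(\mathbf{P}^1\times\mathbf{S}_d)\cong\operatorname{PGL}_2(\mathbf{C})$, into which the commutative three-dimensional group $\Ga^3$ cannot embed. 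The paper's argument is self-contained (it does not use the $B_2=5$ case at all) and avoids the bookkeeping you flag as the main obstacle; your argument buys uniformity with the descent strategy used throughout \S\ref{section:Fano 3folds}, at the cost of having to check that the chain of curve blow-downs really terminates at Picard number $5$ --- which it does, since $\mathbf{S}_d=\operatorname{Bl}_p\mathbf{S}_{d+1}$ makes each step a blow-up along the fiber $\mathbf{P}^1\times\{p\}$ and the chain ends at $\V_3\cong\mathbf{P}^1\times\mathbf{S}_6$. A slight shortcut to your descent would be to apply Blanchard's lemma once to the projection $\mathbf{P}^1\times\mathbf{S}_d\to\mathbf{S}_d$ and invoke Example \ref{example:del Pezzo}, rather than iterating through the blow-ups.
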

\begin{proof}
 By Proposition \ref{prop:B_2=5} it suffices to show that if $X$ is a smooth Fano threefold with $B_2(X)\geq 6$ then $X$ does not admit a $\Ga^3$-structure. It follows by Mori and Mukai classification \cite[Theorem 2, Table 5]{MM81} that such a threefold can be written as a product $X\cong \mathbf{P}^1\times \mathbf{S}_d$, where $\mathbf{S}_d$ is a del Pezzo surface of degree $1\leq d \leq 5$. It is a classical fact that all these del Pezzo surfaces have finite automorphism group (see for instance \cite[Corollary 8.2.40]{Dol12}). In particular, Corollary \ref{co:productauto} gives
 $$\operatorname{Aut}^0(X)\cong \operatorname{Aut}^0(\mathbf{P}^1) \times \operatorname{Aut}^0(\mathbf{S}_d)\cong \operatorname{PGL}_2(\mathbf{C}).$$
 We note that $\Ga^3$ does not embed into $\operatorname{PGL}_2(\mathbf{C})$ since both groups are three-dimensional connected algebraic groups and the former is commutative while the latter is not.
\end{proof}

\section{Further discussion, questions and open problems}\label{section:open problems}

The aim of this section is to discuss some open problems and questions that arise.

\subsection{Automorphism groups}

Smooth Fano threefolds of Picard number one with infinite automorphism group were studied by Prokhorov in \cite{Pro90}: they correspond to $\mathbf{P}^3$, $\mathcal{Q}_3\subseteq \mathbf{P}^4$, $V_5$ and $V_{22}$. See also \cite[Theorem 1.1.2]{KPS18}. In particular, these varieties are precisely the possible compactifications of the affine space $\mathbf{A}^3$ into smooth Fano threefolds with $B_2=1$, by \cite{BM78,Fur86,Fur90,Fur93a,Fur93b,FN89a,FN89b,Muk92,PS88,Pet89,Pet90,Pro91}. 

By looking at the classification of Fano threefolds of Picard number one, Kuznetsov, Prokhorov and Shramov observed in \cite[Corollary 1.1.3]{KPS18} that these varieties are precisely the only Fano threefolds of Picard number one that verify $h^{1,2}(X)=0$ (and hence, they have trivial Intermediate Jacobian).

It is worth remarking that this last observation does not hold in our setting. In fact, all additive Fano threefolds classified in Main Theorem verifies $h^{1,2}(X)=0$ with the exception of $\II_{28}$, i.e., the blow-up of $\mathbf{P}^3$ along a plane cubic. It is however natural to study the automorphism group of the varieties listed in Main Theorem.

\begin{prob}
 Compute the automorphism group $\operatorname{Aut}(X)$ for each of the varieties listed in Main Theorem and understand the inclusion $\mathbf{G}_a^3 \hookrightarrow \operatorname{Aut}^0(X)$.
\end{prob}

\subsection{Higher dimensional toric varieties}

The proof of Proposition \ref{prop:toric case} given in Appendix A below is very algorithmic. It is natural to ask the following.

\begin{prob}
Given the fan and the anticanonical polytope of a smooth toric Fano variety $X$, find an algorithmic way to verify if $X$ is additive or not. For instance, using Macaulay2 \cite{M2}. In particular, determine which of the 124 smooth toric Fano fourfolds classified by Batyrev and Sato in \cite{Bat99,Sat00} are additive.
\end{prob}

\subsection{Higher complexity torus action} The additivity criterion of Arzhantsev and Romaskevich (see Theorem \ref{theo:additive toric varieties}) relies on the connection between $\Ga$-actions and Demazure roots. If instead of considering toric varieties we consider {\it $T$-varieties} of higher complexity, i.e., normal varieties $X$ endowed with an effective action of a torus $\mathbf{T}$ of dimension $\dim(X)-c$, where $c\geq 1$ is the {\it complexity} of the action, then a combinatorial description of such varieties is given by Altmann, Hausen and S\"{u}ss in \cite{AH06,AHS08}. Moreover, locally nilpotent derivations (and hence, $\Ga$-actions) on affine $T$-varieties of complexity-one were studied by Liendo in \cite{Lie10} (cf. \cite{Lie10b,LL16}). The following question arises.

\begin{ques}
 It is possible to extend Arzhantsev-Romaskevich criterion to (not necessarily Fano) projective $T$-varieties of complexity-one? 
\end{ques}

In the Fano case, one possible candidate for being the complexity-one analog of the anticanonical polytope $P$ in the toric case could be the so called {\it divisorial polytope} of $(X,-K_X)$, defined in \cite{IS11} (cf. \cite{Sus14,IS17}). Another candidate could be the Newton-Okounkov body (see \cite{LM09,KK12}) of the anticanonical divisor $\Delta_{Y_\bullet}(-K_X)$ with respect to some flag $Y_\bullet$ of $T$-invariant subvarieties (see \cite{Pet11,IM18}). See also the recent prepublication \cite[\S 2]{BWW18} for another possible point of view.

\subsection{Fano fourfolds with $B_2=1$} Let $X$ a smooth Fano fourfold of Picard number one. In virtue of Theorem \ref{theo:canonical divisor} above, the Fano index of $X$ verifies $i_X\geq 2$. It is known after the work of Kobayashi and Ochiai \cite{KO73} that $i_X\leq 5$ and that $i_X=5$ if and only if $X\cong \mathbf{P}^4$, and that $i_X=4$ if and only if $X\cong \mathcal{Q}_4\subseteq \mathbf{P}^5$. Fano fourfolds with index $i_X=3$ were classified by Fujita \cite{Fuj80,Fuj81,Fuj84} into 5 families and they are called {\it del Pezzo} fourfolds. Fano fourfolds with index $i_X=2$ were classified by Mukai and Wilson \cite{Muk89,Wil87} into 9 families and they are called {\it Fano-Mukai} fourfolds.

\begin{prob}
 Determine which Fano fourfolds with index $i_X\in \{2,3\}$ are additive.
\end{prob}

Prokhorov proved in \cite{Pro94} that only one family of del Pezzo fourfolds can be realized as the compactification of the affine space $\mathbf{A}^4$: a section of the Grassmannian $\mathbf{Gr}(2,5)\subseteq \mathbf{P}^9$ by a linear subspace of codimension 2.  See also \cite{PZ17} for the case of Fano-Mukai fourfolds of genus $10$ as compactifications of $\mathbf{A}^4$. Three families of Fano-Mukai fourfolds are realized as complete intersections in some projective spaces and therefore can be discarded by looking at their automorphism groups.

\subsection{Fano fourfolds with $B_2\geq 2$}

Besides the toric case, we can say that additive Fano fourfolds of Picard number $\rho_X\geq 2$ cannot have large {\it Lefschetz defect}. Let us recall the following definition by Casagrande \cite{Cas12}: Let $X$ be a smooth Fano manifold, the Lefschetz defect of $X$ is defined by
$$\delta_X=\max \{\dim \ker \mbox{H}^2(X,\mathbf{R})\to\mbox{H}^2(D,\mathbf{R}),\; D \mbox{ prime divisor in }X \}, $$
where for a prime divisor $D\subseteq X$ we denote by $\mbox{H}^2(X,\mathbf{R})\to\mbox{H}^2(D,\mathbf{R})$ the restriction map. 

Casagrande proved in \cite[Theorem 1.1]{Cas12} that $0\leq \delta_X \leq 8$ for any smooth Fano variety, and if $\delta_X\geq 4$ then $X\cong S\times T$ where $S$ is a del Pezzo surface with $\rho_X \geq \delta_X+1$ and $T$ is a smooth Fano variety. In particular, if $X$ is an additive Fano fourfold we can deduce by Blanchard's lemma that the latter case is not possible (cf. Example \ref{example:del Pezzo}). In other words, if $X$ is an additive Fano fourfold then $0\leq \delta_X \leq 3$. We may refer the reader to \cite{Cas13a,Cas13b,Cas14} for bounds for the Picard number of $X$ depending on the value of $\delta_X$. 

We also note that by the results of Romano \cite[Theorem 1.1]{Rom16} if $X$ admits a conic bundle structure $f:X\to Y$, i.e. $f:X\to Y$ is a fiber type contraction whose fibers are isomorphic to (eventually singular) plane conics, then $1\leq \rho_X-\rho_Y \leq 8$, and if $\rho_X-\rho_Y \geq 4$ then $\delta_X\geq 4$ and $X\cong S\times T$ where $S$ is a del Pezzo surface with $\rho_X \geq \delta_X+1$ and $T$ is a smooth Fano variety. Again, we can deduce that if $X$ is an additive Fano fourfold admitting a conic bundle structure $f:X\to Y$ then $1\leq \rho_X-\rho_Y \leq 3$.

\appendix
\section{Polytopes incribed in a rectangle}\label{appendix}

The purpose of this Appendix is to complete the proof of Proposition \ref{prop:toric case}. Namely, we prove that the toric Fano threefolds $\III_{29}, \IV_{10}, \IV_{11}$ and $\IV_{12}$ are equivariant compactifications of $\Ga^3$.

Let us recall\footnote{See Definition \ref{defi:inscribed polytope}.} that a lattice polytope $P\subseteq M_\mathbf{R}$ is {\it inscribed in a rectagle} if there is a vertex $v_0\in P$ such that
\begin{enumerate}
 \item the primitive vectors on the edges of $P$ containing $v_0$ form a basis $e_1,\ldots,e_n$ of the lattice $M$;
 \item for every inequality $\langle p, x \rangle \leq a$ on $P$ that corresponds to a facet of $P$ not passing through $v_0$ we have $\langle p,e_i \rangle \geq 0$ for all $i=1,\ldots,n$.
\end{enumerate}

In view of Theorem \ref{theo:additive toric varieties}, we need to determine which Fano polytopes are inscribed in a rectangle in order to classify smooth toric Fano threefolds which are equivariant compactifications of $\mathbf{G}_a^3$.

\begin{remark}\label{rema: appendix} Let us note that condition (1) is automatically fulfilled in our case since the associated toric varieties are smooth. Moreover, we note that the points $p\in N_\mathbf{R}$ appearing in the inequalities $\langle p, x \rangle \leq a$ in condition (2) correspond to normal {\it exterior} vectors to the facets of $P$ and that we can always suppose $a=1$. Therefore, we only need to consider the normal exterior vectors defined by the facets of the polytopes in order to verify condition (2). It is a classical fact that the normal {\it interior} vectors are in fact the primitive generators of the $1-$dimensional cones in the fan $\Delta_P \subseteq N_{\mathbf{R}}$, i.e., we only need to check condition (2) for the set $p_1=-u_1,\ldots,p_{\rho(X)+3}=-u_{\rho(X)+3}$ where $u_1,\ldots,u_{\rho(X)+3}$ are the primitive generators\footnote{By abuse of notation, we identify the $1-$dimensional cone $\rho=\mathbf{R}_{\geq 0}u_\rho \in \Delta_P(1)\subseteq N_\mathbf{R}$ with its primitive generator $u_\rho\in N$} of the rays in $\Delta_P(1)$.
\end{remark}

In view of Remark \ref{rema: appendix} above, for each smooth toric Fano threefold $\III_{29}, \IV_{10}, \IV_{11}$ and $\IV_{12}$, we need to determine its associated polytope and its associated fan. We will follow the Graded Ring Database's section which is based on \cite{Kas06}.

\begin{enumerate}
 \item $\III_{29}$ with polytope given by the convex hull of the following points
 \begin{equation*}\begin{split}\{ & (-1,-1,-1), (-1,-1,3), (-1,3,-1), (1,-1,-1), (0,-1,2), (1,-1,0), \\ & (0,2,-1), (1,0,-1) \}\end{split}\end{equation*} 
  \begin{claim*}
  The above polytope {\bf is inscribed} in a rectangle.
 \end{claim*}
 \begin{proof}
  The primitive generators of the $1-$dimensional cones in the fan $\Delta_P$ are given by 
  $$\Delta_P(1)=\{(1,0,0), (0,1,0), (0,0,1), (-1,-1,-1), (-1,0,0), (-2,-1,-1) \}.$$
  Let $v_0=(-1,-1,-1)$. The facets not containing $v_0$ correspond to vectors $u_i=-p_i \in \Delta_P(1)$ such that $\langle p_i, v_0\rangle \neq 1$. Namely, to $p_4 = (1,1,1) $, $p_5=(1,0,0)$ and $p_6 = (2,1,1)$. The edges of $P$ containing $v_0$ are
  $$[v_0;(1,-1,-1)],[v_0;(-1,3,-1)],[v_0;(-1,-1,3)]. $$  
  The primitive vectors on these edges are given by $e_1 = (1,0,0)$, ${e_2 = (0,1,0)}$ and $e_3 = (0,0,1)$, for which we have $\langle p_4,e_i \rangle \geq 0$, $\langle p_5,e_i \rangle \geq 0$ and $\langle p_6,e_i \rangle \geq 0$.
 \end{proof}
\begin{figure}[H]
\centering
\begin{tikzpicture}[tdplot_main_coords,scale=0.5,line join=bevel]
\coordinate (A1) at (-1,-1,-1) ;
\coordinate (A2) at (-1,-1,3) ;
\coordinate (A3) at (-1,3,-1) ;
\coordinate (A4) at (1,-1,-1) ;
\coordinate (A5) at (0,-1,2) ;
\coordinate (A6) at (1,-1,0) ;
\coordinate (A7) at (0,2,-1) ;
\coordinate (A8) at (1,0,-1) ;

\fill  (A1)  circle[radius=4pt] node[left=0pt] {$v_0$};

\draw (A1) -- (A2) -- (A3) -- cycle  ;
\draw (A1) -- (A2) -- (A5) -- (A6) -- (A4) -- cycle  ;
\draw (A1) -- (A4) -- (A8) -- (A7) -- (A3) -- cycle  ;
\draw (A4) -- (A6) -- (A8) -- cycle  ;
\draw (A5) -- (A6) -- (A8) -- (A7) -- cycle  ;
\draw (A7) -- (A5) -- (A2) -- (A3) -- cycle  ;
\draw [fill opacity=0.7,fill=black!80!white] (A4) -- (A6) -- (A8) -- cycle  ;
\draw [fill opacity=0.7,fill=black!80!white] (A5) -- (A6) -- (A8) -- (A7) -- cycle  ;
\draw [fill opacity=0.7,fill=black!80!white] (A7) -- (A5) -- (A2) -- (A3) -- cycle  ;
\end{tikzpicture}
\caption{Fano polytope $\III_{29}$} \label{fig:P10}
\end{figure}
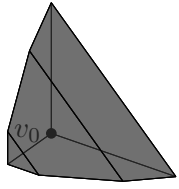

 \item $\IV_{11}$ with polytope given by the convex hull of the following points
 \begin{equation*}\begin{split}\{ & (-1,-1,-1), (-1,-1,1), (-1,2,-1), (-1,2,1), (1,-1,-1), (0,-1,1), \\ & (1,-1,0), (1,0,-1), (0,1,1), (1,0,0) \} \end{split}\end{equation*} 
  \begin{claim*}
  The above polytope {\bf is inscribed} in a rectangle.
 \end{claim*}
 \begin{proof}
  The primitive generators of the $1-$dimensional cones in the fan $\Delta_P$ are given by 
  $$\Delta_P(1)=\{(1,0,0), (0,1,0), (0,0,1), (-1,-1,0), (0,0,-1), (-1,0,0), (-1,0,-1) \}.$$
  Let $v_0=(-1,-1,-1)$. The facets not containing $v_0$ correspond to vectors $u_i=-p_i \in \Delta_P(1)$ such that $\langle p_i, v_0\rangle \neq 1$. Namely, to $p_4 = (1,1,0)$, $p_5=(0,0,1)$, $p_6=(1,0,0)$ and $p_7=(1,0,1)$. The edges of $P$ containing $v_0$ are
  $$[v_0;(1,-1,-1)],[v_0;(-1,2,-1)],[v_0;(-1,-1,1)]. $$  
  The primitive vectors on these edges are given by $e_1 = (1,0,0)$, ${e_2 = (0,1,0)}$ and $e_3 = (0,0,1)$, for which we have $\langle p_4,e_i \rangle \geq 0$, $\langle p_5,e_i \rangle \geq 0$, $\langle p_6,e_i \rangle \geq 0$ and $\langle p_7,e_i \rangle \geq 0$.
 \end{proof}
 \begin{figure}[H]
\centering
\begin{tikzpicture}[tdplot_main_coords,scale=0.5,line join=bevel]
\coordinate (A1) at (-1,-1,-1) ;
\coordinate (A2) at (-1,-1,1) ;
\coordinate (A3) at (-1,2,-1) ;
\coordinate (A4) at (-1,2,1) ;
\coordinate (A5) at (1,-1,-1) ;
\coordinate (A6) at (0,-1,1) ;
\coordinate (A7) at (1,-1,0) ;
\coordinate (A8) at (1,0,-1) ;
\coordinate (A9) at (0,1,1) ;
\coordinate (A10) at (1,0,0) ;

\fill  (A1)  circle[radius=4pt] node[left=8pt, above=-4pt] {$v_0$};

\draw (A1) -- (A3) -- (A8) -- (A5) -- cycle  ;
\draw (A5) -- (A7) -- (A6) -- (A2) -- (A1) -- cycle  ;
\draw (A2) -- (A4) -- (A3) -- (A1) -- cycle  ;
\draw (A5) -- (A7) -- (A10) -- (A8) -- cycle  ;
\draw (A9) -- (A10) -- (A7) -- (A6) -- cycle  ;
\draw (A8) -- (A10) -- (A9) -- (A4) -- (A3) -- cycle  ;
\draw (A2) -- (A4) -- (A9) -- (A6) -- cycle  ;
\draw [fill opacity=0.7,fill=black!80!white] (A5) -- (A7) -- (A10) -- (A8) -- cycle  ;
\draw [fill opacity=0.7,fill=black!80!white] (A9) -- (A10) -- (A7) -- (A6) -- cycle  ;
\draw [fill opacity=0.7,fill=black!80!white] (A8) -- (A10) -- (A9) -- (A4) -- (A3) -- cycle  ;
\draw [fill opacity=0.7,fill=black!80!white] (A2) -- (A4) -- (A9) -- (A6) -- cycle  ;
\end{tikzpicture}
\caption{Fano polytope $\IV_{11}$} \label{fig:P15}
\end{figure}
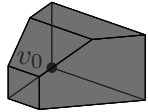
 \item $\IV_{12}$ with polytope given by the convex hull of the following points
 \begin{equation*}\begin{split}\{ & (-1,-1,-1), (-1,-1,3), (-1,1,-1), (-1,1,1), (1,-1,-1), (1,-1,1), \\ & (1,0,-1), (0,1,-1), (1,0,0), (0,1,0) \} \end{split}\end{equation*} 
  \begin{claim*}
  The above polytope {\bf is inscribed} in a rectangle. 
 \end{claim*}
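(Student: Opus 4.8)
The plan is to argue exactly as for the polytopes of $\III_{29}$ and $\IV_{11}$ treated just above: produce the distinguished vertex $v_0$, read off the exterior facet normals of $P$, and verify conditions (1) and (2) of Definition~\ref{defi:inscribed polytope} against them. By Remark~\ref{rema: appendix}, condition (1) is automatic here (the toric variety $\IV_{12}$ is smooth), and for condition (2) it suffices to examine, among the primitive ray generators of $\Delta_P$, those corresponding to the facets of $P$ that do not pass through $v_0$.

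First I would compute the facets of $P$ from the $10$ listed vertices. Since $\rho(\IV_{12})=4$ and $\dim \IV_{12}=3$, I expect exactly seven facets; a direct check of candidate supporting hyperplanes $\langle p,x\rangle = 1$ should yield the three coordinate facets $\{x=-1\}$, $\{y=-1\}$, $\{z=-1\}$ together with the four facets with exterior normals $(1,1,1)$, $(1,0,0)$, $(0,1,0)$ and $(1,1,0)$; equivalently,
$$\Delta_P(1)=\{(1,0,0),(0,1,0),(0,0,1),(-1,-1,-1),(-1,0,0),(0,-1,0),(-1,-1,0)\}.$$
Then I would take $v_0=(-1,-1,-1)$, as in the previous two cases. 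The three edges of $P$ through $v_0$ are $[v_0;(1,-1,-1)]$, $[v_0;(-1,1,-1)]$ and $[v_0;(-1,-1,3)]$, whose primitive direction vectors are the standard basis $e_1=(1,0,0)$, $e_2=(0,1,0)$, $e_3=(0,0,1)$. The facets not containing $v_0$ are precisely those whose exterior normal $p$ satisfies $\langle p,v_0\rangle\neq 1$, namely those with $p=(1,1,1)$, $(1,0,0)$, $(0,1,0)$ and $(1,1,0)$; since all four of these normals have non-negative entries, $\langle p,e_i\rangle\geq 0$ holds for $i=1,2,3$ in every case, which is condition (2). Hence $P$ is inscribed in a rectangle, and (as for the previous two polytopes) I would append the corresponding $3$-dimensional picture of $P$ with $v_0$ and the facets not through $v_0$ highlighted.

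The only mildly delicate point is the first step — correctly enumerating the facets of $P$ — which is routine but error-prone; one can do it by hand, checking each proposed hyperplane against the $10$ vertices (and confirming along the way that every vertex lies on exactly three facets, so that $P$ is simple and $v_0$ has the expected three incident edges), or with the software underlying the Graded Ring Database. Once the facet list is in place, verifying conditions (1) and (2) is immediate from the non-negativity of the coordinates of the relevant normals, just as in the two preceding claims.
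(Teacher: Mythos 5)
Your proposal is correct and follows essentially the same route as the paper: same distinguished vertex $v_0=(-1,-1,-1)$, same set $\Delta_P(1)$ of primitive ray generators, same three edges through $v_0$ with primitive directions the standard basis, and the same verification that the four exterior normals $(1,1,1)$, $(1,0,0)$, $(0,1,0)$, $(1,1,0)$ of the facets missing $v_0$ have non-negative entries. The only difference is presentational: you make explicit the count of seven facets via $\rho(\IV_{12})+\dim\IV_{12}=7$ and flag the facet enumeration as the error-prone step, which the paper leaves implicit.
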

 \begin{proof}
  The primitive generators of the $1-$dimensional cones in the fan $\Delta_P$ are given by 
  $$\Delta_P(1)=\{(1,0,0), (0,1,0), (0,0,1), (-1,-1,-1), (-1,0,0), (0,-1,0), (-1,-1,0) \}.$$
  Let $v_0=(-1,-1,-1)$. The facets not containing $v_0$ correspond to vectors $u_i=-p_i \in \Delta_P(1)$ such that $\langle p_i, v_0\rangle \neq 1$. Namely, to $p_4 = (1,1,1)$, $p_5=(1,0,0)$, $p_6=(0,1,0)$ and $p_7=(1,1,0)$. The edges of $P$ containing $v_0$ are
  $$[v_0;(1,-1,-1)],[v_0;(-1,1,-1)],[v_0;(-1,-1,3)]. $$  
  The primitive vectors on these edges are given by $e_1 = (1,0,0)$, ${e_2 = (0,1,0)}$ and $e_3 = (0,0,1)$, for which we have $\langle p_4,e_i \rangle \geq 0$, $\langle p_5,e_i \rangle \geq 0$, $\langle p_6,e_i \rangle \geq 0$ and $\langle p_7,e_i \rangle \geq 0$.
 \end{proof}
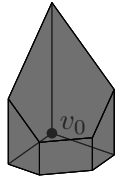
\begin{figure}[H]
\centering
\begin{tikzpicture}[tdplot_main_coords,scale=0.5,line join=bevel]
\coordinate (A1) at (-1,-1,-1) ;
\coordinate (A2) at (-1,-1,3) ;
\coordinate (A3) at (-1,1,-1) ;
\coordinate (A4) at (-1,1,1) ;
\coordinate (A5) at (1,-1,-1) ;
\coordinate (A6) at (1,-1,1) ;
\coordinate (A7) at (1,0,-1) ;
\coordinate (A8) at (0,1,-1) ;
\coordinate (A9) at (1,0,0) ;
\coordinate (A10) at (0,1,0) ;

\fill  (A1)  circle[radius=4pt] node[right=8pt, above=-4pt] {$v_0$};

\draw (A1) -- (A3) -- (A8) -- (A7) -- (A5) -- cycle  ;
\draw (A5) -- (A6) -- (A2) -- (A1) -- cycle  ;
\draw (A2) -- (A4) -- (A3) -- (A1) -- cycle  ;
\draw (A5) -- (A7) -- (A9) -- (A6) -- cycle  ;
\draw (A9) -- (A10) -- (A8) -- (A7) -- cycle  ;
\draw (A8) -- (A10) -- (A4) -- (A3) -- cycle  ;
\draw (A2) -- (A4) -- (A10) -- (A9) -- (A6) -- cycle  ;
\draw [fill opacity=0.7,fill=black!80!white] (A5) -- (A7) -- (A9) -- (A6) -- cycle  ;
\draw [fill opacity=0.7,fill=black!80!white] (A9) -- (A10) -- (A8) -- (A7) -- cycle  ;
\draw [fill opacity=0.7,fill=black!80!white] (A8) -- (A10) -- (A4) -- (A3) -- cycle  ;
\draw [fill opacity=0.7,fill=black!80!white] (A2) -- (A4) -- (A10) -- (A9) -- (A6) -- cycle  ;
\end{tikzpicture}
\caption{Fano polytope $\IV_{12}$} \label{fig:P16}
\end{figure}
\end{enumerate}

\section*{Acknowledgements}
We are very grateful to Mikhail \textsc{Zaidenberg} for many valuable suggestions and constant encouragement during the preparation of this article. We would like to thank Ivan \textsc{Arzhantsev} and Yuri \textsc{Prokhorov} for important suggestions. We also thank Michel \textsc{Brion}, Pierre-Emmanuel \textsc{Chaput}, Adrien \textsc{Dubouloz}, Andrea \textsc{Fanelli}, Bruno \textsc{Laurent} and Ronan \textsc{Terpereau} for fruitful discussions. Last but not the least we appreciate the stimulating working environment provided by the Fourier Institute where most part of this work was done. 

The second author is founded by Hua Loo-Keng Center for Mathematical Sciences, AMSS, CAS.

{\small \bibliography{Ga3}{}}

\begin{thebibliography}{{Dem}70}

\bibitem[AB92]{AB92}
A.~{Alzati} and M.~{Bertolini}.
\newblock {On the rationality of Fano 3-folds with $B\sb 2{\ge}2$.}
\newblock {\em {Matematiche}}, 47(1):63--74, 1992.

\bibitem[AH06]{AH06}
K.~Altmann and J.~Hausen.
\newblock Polyhedral divisors and algebraic torus actions.
\newblock {\em Math. Ann.}, 334(3):557--607, 2006.

\bibitem[AHS08]{AHS08}
K.~Altmann, J.~Hausen, and H.~S{\"{u}}ss.
\newblock Gluing affine torus actions via divisorial fans.
\newblock {\em Transform. Groups}, 13(2):215--242, 2008.

\bibitem[AP14]{AP14}
I.~{Arzhantsev} and A.~{Popovskiy}.
\newblock {Additive actions on projective hypersurfaces.}
\newblock In {\em {Automorphisms in birational and affine geometry. Papers
  based on the presentations at the conference, Levico Terme, Italy, October 29
  -- November 3, 2012}}, pages 17--33. Cham: Springer, 2014.

\bibitem[AR14]{AR14}
C.~{Araujo} and J.~J. {Ram\'on-Mar\'{\i}}.
\newblock {Flat deformations of $\mathbb{P}^n$.}
\newblock {\em {Bull. Braz. Math. Soc. (N.S.)}}, 45(3):371--383, 2014.

\bibitem[AR17]{AR17}
I.~{Arzhantsev} and E.~{Romaskevich}.
\newblock {Additive actions on toric varieties.}
\newblock {\em {Proc. Am. Math. Soc.}}, 145(5):1865--1879, 2017.

\bibitem[{Arz}11]{Arzhantsev1}
I.~V. {Arzhantsev}.
\newblock {Flag varieties as equivariant compactifications of
  $\mathbb{G}_{a}^{n}$.}
\newblock {\em {Proc. Am. Math. Soc.}}, 139(3):783--786, 2011.

\bibitem[AS11]{AS11}
I.~V. Arzhantsev and E.~V. Sharoyko.
\newblock Hassett–tschinkel correspondence: Modality and projective
  hypersurfaces.
\newblock {\em Journal of Algebra}, 348(1):217--232, 2011.

\bibitem[{Bat}82]{Bat82}
V.V. {Batyrev}.
\newblock {Toroidal Fano 3-folds.}
\newblock {\em {Math. USSR, Izv.}}, 19:13--25, 1982.

\bibitem[Bat99]{Bat99}
V.~V. Batyrev.
\newblock On the classification of toric {F}ano {$4$}-folds.
\newblock {\em J. Math. Sci. (New York)}, 94(1):1021--1050, 1999.
\newblock Algebraic geometry, 9.

\bibitem[Bil17]{Bilu}
M.~Bilu.
\newblock {\em Produits eul{\'e}riens motiviques}.
\newblock PhD thesis, Universit\'e Paris Saclay, 2017.

\bibitem[Bir16a]{Bir16a}
C.~Birkar.
\newblock Anti-pluricanonical systems on {F}ano varieties.
\newblock {\em arXiv preprint \emph{\url{arXiv:1603.05765}}}, 2016.

\bibitem[Bir16b]{Bir16b}
C.~Birkar.
\newblock Singularities of linear systems and boundedness of {F}ano varieties.
\newblock {\em arXiv preprint \emph{\url{arXiv:1609.05543}}}, 2016.

\bibitem[Bla56]{Bla56}
A.~Blanchard.
\newblock Sur les vari\'et\'es analytiques complexes.
\newblock {\em Ann. Sci. Ecole Norm. Sup. (3)}, 73:157--202, 1956.

\bibitem[BM78]{BM78}
L.~Brenton and J.~Morrow.
\newblock Compactifications of {${\bf C}^{n}$}.
\newblock {\em Trans. Amer. Math. Soc.}, 246:139--153, 1978.

\bibitem[Bri17]{Bri17}
M.~Brion.
\newblock Some structure theorems for algebraic groups.
\newblock In {\em Algebraic groups: structure and actions}, volume~94 of {\em
  Proc. Sympos. Pure Math.}, pages 53--126. Amer. Math. Soc., Providence, RI,
  2017.

\bibitem[BWW18]{BWW18}
J.~{Buczy{\'n}ski}, J.~A. {Wi{\'s}niewski}, and A.~{Weber}.
\newblock Algebraic torus actions on contact manifolds.
\newblock {\em arXiv preprint \emph{\url{arXiv:1802.05002}}}, 2018.

\bibitem[Cas12]{Cas12}
C.~Casagrande.
\newblock On the {P}icard number of divisors in {F}ano manifolds.
\newblock {\em Ann. Sci. \'Ec. Norm. Sup\'er. (4)}, 45(3):363--403, 2012.

\bibitem[Cas13a]{Cas13b}
C.~Casagrande.
\newblock Numerical invariants of {F}ano 4-folds.
\newblock {\em Math. Nachr.}, 286(11-12):1107--1113, 2013.

\bibitem[Cas13b]{Cas13a}
C.~Casagrande.
\newblock On the birational geometry of {F}ano 4-folds.
\newblock {\em Math. Ann.}, 355(2):585--628, 2013.

\bibitem[Cas14]{Cas14}
C.~Casagrande.
\newblock On some {F}ano manifolds admitting a rational fibration.
\newblock {\em J. Lond. Math. Soc. (2)}, 90(1):1--28, 2014.

\bibitem[CLS11]{CLS}
D.~A. {Cox}, J.~B. {Little}, and H.~K. {Schenck}.
\newblock {\em {Toric varieties.}}
\newblock Providence, RI: American Mathematical Society (AMS), 2011.

\bibitem[CLT02]{CLT02}
A.~Chambert-Loir and Y.~Tschinkel.
\newblock On the distribution of points of bounded height on equivariant
  compactifications of vector groups.
\newblock {\em Invent. Math.}, 148(2):421--452, 2002.

\bibitem[CLT12]{CLT12}
A.~Chambert-Loir and Y.~Tschinkel.
\newblock Integral points of bounded height on partial equivariant
  compactifications of vector groups.
\newblock {\em Duke Math. J.}, 161(15):2799--2836, 2012.

\bibitem[{Dem}70]{Dem70}
M.~{Demazure}.
\newblock {Sous-groupes alg\'ebriques de rang maximum du groupe de Cremona}.
\newblock {\em {Ann. Sci. \'Ec. Norm. Sup\'er. (4)}}, 3:507--588, 1970.

\bibitem[{Dev}15]{Dev15}
R.~{Devyatov}.
\newblock {Unipotent commutative group actions on flag varieties and nilpotent
  multiplications.}
\newblock {\em {Transform. Groups}}, 20(1):21--64, 2015.

\bibitem[DL10]{DL10}
U.~Derenthal and D.~Loughran.
\newblock Singular del {P}ezzo surfaces that are equivariant compactifications.
\newblock {\em Zap. Nauchn. Sem. S.-Peterburg. Otdel. Mat. Inst. Steklov.
  (POMI)}, 377(Issledovaniya po Teorii Chisel. 10):26--43, 241, 2010.

\bibitem[DL15]{DL15}
U.~Derenthal and D.~Loughran.
\newblock Equivariant compactifications of two-dimensional algebraic groups.
\newblock {\em Proc. Edinb. Math. Soc. (2)}, 58(1):149--168, 2015.

\bibitem[{Dol}12]{Dol12}
I.~V. {Dolgachev}.
\newblock {\em {Classical algebraic geometry. A modern view.}}
\newblock Cambridge: Cambridge University Press, 2012.

\bibitem[FH14]{FH14}
B.~{Fu} and J.-M. {Hwang}.
\newblock {Uniqueness of equivariant compactifications of $\mathbb{C}^n$ by a
  Fano manifold of Picard number 1.}
\newblock {\em {Math. Res. Lett.}}, 21(1):121--125, 2014.

\bibitem[FH17]{FH17}
B.~{Fu} and J.-M. {Hwang}.
\newblock Euler-symmetric projective varieties.
\newblock {\em arXiv preprint \emph{\url{arXiv:1707.06764}}}, 2017.

\bibitem[FN89a]{FN89b}
M.~Furushima and N.~Nakayama.
\newblock The family of lines on the {F}ano threefold {$V_5$}.
\newblock {\em Nagoya Math. J.}, 116:111--122, 1989.

\bibitem[FN89b]{FN89a}
M.~Furushima and N.~Nakayama.
\newblock A new construction of a compactification of {${\bf C}^3$}.
\newblock {\em Tohoku Math. J. (2)}, 41(4):543--560, 1989.

\bibitem[Fuj80]{Fuj80}
T.~Fujita.
\newblock On the structure of polarized manifolds with total deficiency one.
  {I}.
\newblock {\em J. Math. Soc. Japan}, 32(4):709--725, 1980.

\bibitem[Fuj81]{Fuj81}
T.~Fujita.
\newblock On the structure of polarized manifolds with total deficiency one.
  {II}.
\newblock {\em J. Math. Soc. Japan}, 33(3):415--434, 1981.

\bibitem[Fuj84]{Fuj84}
T.~Fujita.
\newblock On the structure of polarized manifolds with total deficiency one.
  {III}.
\newblock {\em J. Math. Soc. Japan}, 36(1):75--89, 1984.

\bibitem[{Fuj}87]{Fuj87}
T.~{Fujita}.
\newblock {On polarized manifolds whose adjoint bundles are not semipositive.}
\newblock {Algebraic geometry, Proc. Symp., Sendai/Jap. 1985, Adv. Stud. Pure
  Math. 10, 167-178 (1987).}, 1987.

\bibitem[Fur86]{Fur86}
M.~Furushima.
\newblock Singular del {P}ezzo surfaces and analytic compactifications of
  {$3$}-dimensional complex affine space {${\bf C}^3$}.
\newblock {\em Nagoya Math. J.}, 104:1--28, 1986.

\bibitem[Fur90]{Fur90}
M.~Furushima.
\newblock Complex analytic compactifications of {${\bf C}^3$}.
\newblock {\em Compositio Math.}, 76(1-2):163--196, 1990.
\newblock Algebraic geometry (Berlin, 1988).

\bibitem[Fur93a]{Fur93b}
M.~Furushima.
\newblock The complete classification of compactifications of {${\bf C}^3$}
  which are projective manifolds with the second {B}etti number one.
\newblock {\em Math. Ann.}, 297(4):627--662, 1993.

\bibitem[Fur93b]{Fur93a}
M.~Furushima.
\newblock A new example of a compactification of {${\bf C}^3$}.
\newblock {\em Math. Z.}, 212(3):395--399, 1993.

\bibitem[GS]{M2}
Daniel~R. Grayson and Michael~E. Stillman.
\newblock Macaulay2, a software system for research in algebraic geometry.
\newblock Available at \url{http://www.math.uiuc.edu/Macaulay2/}.

\bibitem[Har77]{Hartshorne}
R.~Hartshorne.
\newblock {\em Algebraic geometry}.
\newblock Springer-Verlag, New York-Heidelberg, 1977.
\newblock Graduate Texts in Mathematics, No. 52.

\bibitem[{Hir}54]{Hir54}
F.~{Hirzebruch}.
\newblock {Some problems on differentiable and complex manifolds.}
\newblock {\em {Ann. Math. (2)}}, 60:213--236, 1954.

\bibitem[HT99]{HT99}
B.~{Hassett} and Yu. {Tschinkel}.
\newblock {Geometry of equivariant compactifications of $\mathbb G^n_a$.}
\newblock {\em {Int. Math. Res. Not.}}, 1999(22):1211--1230, 1999.

\bibitem[IM18]{IM18}
N.~Ilten and C.~Manon.
\newblock Rational complexity-one {T}-varieties are well-poised.
\newblock {\em To appear in International Mathematics Research Notices}, 2018.

\bibitem[IP99]{IP99}
V.A. {Iskovskikh} and Yu.G. {Prokhorov}.
\newblock {Fano varieties.}
\newblock In {\em {Algebraic geometry V: Fano varieties. Transl. from the
  Russian by Yu. G. Prokhorov and S. Tregub}}, pages 1--245. Berlin: Springer,
  1999.

\bibitem[IS11]{IS11}
N.~Ilten and H.~S\"uss.
\newblock Polarized complexity-1 {$T$}-varieties.
\newblock {\em Michigan Math. J.}, 60(3):561--578, 2011.

\bibitem[IS17]{IS17}
N.~Ilten and H.~S\"uss.
\newblock K-stability for {F}ano manifolds with torus action of complexity 1.
\newblock {\em Duke Math. J.}, 166(1):177--204, 2017.

\bibitem[Isk77]{Isk77}
V.~A. Iskovskih.
\newblock Fano threefolds. {I}.
\newblock {\em Izv. Akad. Nauk SSSR Ser. Mat.}, 41(3):516--562, 717, 1977.

\bibitem[Isk78]{Isk78}
V.~A. Iskovskih.
\newblock Fano threefolds. {II}.
\newblock {\em Izv. Akad. Nauk SSSR Ser. Mat.}, 42(3):506--549, 1978.

\bibitem[Isk79]{Isk79}
V.~A. Iskovskih.
\newblock Anticanonical models of three-dimensional algebraic varieties.
\newblock In {\em Current problems in mathematics, {V}ol. 12 ({R}ussian)},
  pages 59--157, 239 (loose errata). VINITI, Moscow, 1979.

\bibitem[Kas06]{Kas06}
A.~M. Kasprzyk.
\newblock Toric {F}ano three-folds with terminal singularities.
\newblock {\em Tohoku Math. J. (2)}, 58(1):101--121, 2006.

\bibitem[Keb98]{Keb98}
S.~Kebekus.
\newblock Simple models of quasihomogeneous projective {$3$}-folds.
\newblock {\em Doc. Math.}, 3:15--26, 1998.

\bibitem[Kis02]{Kis02}
T.~Kishimoto.
\newblock A new proof of a theorem of {R}amanujam-{M}orrow.
\newblock {\em J. Math. Kyoto Univ.}, 42(1):117--139, 2002.

\bibitem[{Kis}05]{Kis05}
T.~{Kishimoto}.
\newblock {Compactifications of contractible affine 3-folds into smooth Fano
  3-folds with $B_2=2$.}
\newblock {\em {Math. Z.}}, 251(4):783--820, 2005.

\bibitem[KK12]{KK12}
K.~{Kaveh} and A.G. {Khovanskii}.
\newblock {Newton-Okounkov bodies, semigroups of integral points, graded
  algebras and intersection theory.}
\newblock {\em {Ann. Math. (2)}}, 176(2):925--978, 2012.

\bibitem[KM98]{KM98}
J.~Koll\'ar and S.~Mori.
\newblock {\em Birational geometry of algebraic varieties}, volume 134 of {\em
  Cambridge Tracts in Mathematics}.
\newblock Cambridge University Press, Cambridge, 1998.
\newblock With the collaboration of C. H. Clemens and A. Corti, Translated from
  the 1998 Japanese original.

\bibitem[KMM92]{KMM92}
J.~Koll\'ar, Y.~Miyaoka, and S.~Mori.
\newblock Rational connectedness and boundedness of {F}ano manifolds.
\newblock {\em J. Differential Geom.}, 36(3):765--779, 1992.

\bibitem[KO73]{KO73}
S.~Kobayashi and T.~Ochiai.
\newblock Characterizations of complex projective spaces and hyperquadrics.
\newblock {\em J. Math. Kyoto Univ.}, 13:31--47, 1973.

\bibitem[Kod71]{Kod71}
K.~Kodaira.
\newblock Holomorphic mappings of polydiscs into compact complex manifolds.
\newblock {\em J. Differential Geometry}, 6:33--46, 1971.

\bibitem[KPS18]{KPS18}
A.~Kuznetsov, Yu. Prokhorov, and C.~Shramov.
\newblock Hilbert schemes of lines and conics and automorphism groups of {F}ano
  threefolds.
\newblock {\em To appear in Japanese Journal of Mathematics}, 2018.

\bibitem[Lie10a]{Lie10}
A.~Liendo.
\newblock Affine {$\Bbb T$}-varieties of complexity one and locally nilpotent
  derivations.
\newblock {\em Transform. Groups}, 15(2):389--425, 2010.

\bibitem[Lie10b]{Lie10b}
A.~Liendo.
\newblock {$\Bbb G_a$}-actions of fiber type on affine {$\Bbb T$}-varieties.
\newblock {\em J. Algebra}, 324(12):3653--3665, 2010.

\bibitem[LL16]{LL16}
K.~Langlois and A.~Liendo.
\newblock Additive group actions on affine {$\Bbb{T}$}-varieties of complexity
  one in arbitrary characteristic.
\newblock {\em J. Algebra}, 449:730--773, 2016.

\bibitem[LM09]{LM09}
R.~{Lazarsfeld} and M.~{Musta\c t\u a}.
\newblock {Convex bodies associated to linear series.}
\newblock {\em {Ann. Sci. \'Ec. Norm. Sup\'er. (4)}}, 42(5):783--835, 2009.

\bibitem[MM81]{MM81}
S.~{Mori} and S.~{Mukai}.
\newblock {Classification of Fano 3-folds with $B\sb 2 \ge 2$.}
\newblock {\em {Manuscr. Math.}}, 36:147--162, 1981.

\bibitem[MM83]{MM83}
S.~{Mori} and S.~{Mukai}.
\newblock {On Fano 3-folds with $B\sb 2\ge 2$.}
\newblock {Algebraic varieties and analytic varieties, Proc. Symp., Tokyo 1981,
  Adv. Stud. Pure Math. 1, 101-129 (1983).}, 1983.

\bibitem[MM03]{MM03}
S.~{Mori} and S.~{Mukai}.
\newblock {Erratum: "Classification of Fano 3-folds with $B\sb 2 \ge 2$"
  [Manuscripta Math. \bf{36} (1981/82), no. 2, 147-162]}.
\newblock {\em {Manuscr. Math.}}, 110:407, 2003.

\bibitem[Mor72]{Mor72}
J.~A. Morrow.
\newblock Compactifications of {${\bf C}^{2}$}.
\newblock {\em Bull. Amer. Math. Soc.}, 78:813--816, 1972.

\bibitem[Mor73]{Mor73}
J.~A. Morrow.
\newblock Minimal normal compactifications of {${\bf C^{2}}$}.
\newblock {\em Rice Univ. Studies}, 59(1):97--112, 1973.
\newblock Complex analysis, 1972 (Proc. Conf., Rice Univ., Houston, Tex.,
  1972), Vol. I: Geometry of singularities.

\bibitem[Mor82]{Mor82}
S.~Mori.
\newblock Threefolds whose canonical bundles are not numerically effective.
\newblock {\em Ann. of Math. (2)}, 116(1):133--176, 1982.

\bibitem[MS90]{Mul90}
S.~M{\"u}ller-Stach.
\newblock {Compactification of ${\mathbb C}\sp 3$ with reducible boundary
  divisor.}
\newblock {\em {Math. Ann.}}, 286(1-3):409--431, 1990.

\bibitem[Muk89]{Muk89}
S.~Mukai.
\newblock Biregular classification of {F}ano {$3$}-folds and {F}ano manifolds
  of coindex {$3$}.
\newblock {\em Proc. Nat. Acad. Sci. U.S.A.}, 86(9):3000--3002, 1989.

\bibitem[Muk92]{Muk92}
S.~Mukai.
\newblock Fano {$3$}-folds.
\newblock In {\em Complex projective geometry ({T}rieste, 1989/{B}ergen,
  1989)}, volume 179 of {\em London Math. Soc. Lecture Note Ser.}, pages
  255--263. Cambridge Univ. Press, Cambridge, 1992.

\bibitem[Nag17]{Nag17}
M.~Nagaoka.
\newblock Fano compactifications of contractible affine 3-folds with trivial
  log canonical divisors.
\newblock {\em arXiv preprint \emph{\url{arXiv:1708.02713}}}, 2017.

\bibitem[Pet89]{Pet89}
T.~Peternell.
\newblock Compactifications of {${\bf C}^3$}. {II}.
\newblock {\em Math. Ann.}, 283(1):121--137, 1989.

\bibitem[Pet90]{Pet90}
T.~Peternell.
\newblock Compactifications of {${\bf C}^3$}. {III}.
\newblock {\em Math. Z.}, 205(2):213--222, 1990.

\bibitem[Pet11]{Pet11}
L.~Petersen.
\newblock Okounkov bodies of complexity-one {T}-varieties.
\newblock {\em arXiv preprint \emph{\url{arXiv:1108.0632}}}, 2011.

\bibitem[Pey02]{Pey02}
E.~Peyre.
\newblock Points de hauteur born\'ee et g\'eom\'etrie des vari\'et\'es
  (d'apr\`es {Y}. {M}anin et al.).
\newblock {\em Ast\'erisque}, (282):Exp. No. 891, ix, 323--344, 2002.
\newblock S\'eminaire Bourbaki, Vol. 2000/2001.

\bibitem[Poo08]{Poo08}
B.~Poonen.
\newblock Isomorphism types of commutative algebras of finite rank over an
  algebraically closed field.
\newblock In {\em Computational arithmetic geometry}, volume 463 of {\em
  Contemp. Math.}, pages 111--120. Amer. Math. Soc., Providence, RI, 2008.

\bibitem[Pro90]{Pro90}
Yu.~G. Prokhorov.
\newblock Automorphism groups of {F}ano {$3$}-folds.
\newblock {\em Uspekhi Mat. Nauk}, 45(3(273)):195--196, 1990.

\bibitem[Pro91]{Pro91}
Yu.~G. Prokhorov.
\newblock Fano threefolds of genus {$12$} and compactifications of {${\bf
  C}^3$}.
\newblock {\em Algebra i Analiz}, 3(4):162--170, 1991.

\bibitem[Pro94]{Pro94}
Yu.~G. Prokhorov.
\newblock Compactifications of {${\bf C}^4$} of index {$3$}.
\newblock In {\em Algebraic geometry and its applications ({Y}aroslavl, 1992)},
  Aspects Math., E25, pages 159--169. Friedr. Vieweg, Braunschweig, 1994.

\bibitem[PS88]{PS88}
T.~Peternell and M.~Schneider.
\newblock Compactifications of {${\bf C}^3$}. {I}.
\newblock {\em Math. Ann.}, 280(1):129--146, 1988.

\bibitem[PZ17]{PZ17}
Yu.~G. Prokhorov and M.~Zaidenberg.
\newblock Fano-{M}ukai fourfolds of genus $10$ as compactifications of {${\bf
  C}^4$}.
\newblock {\em arXiv preprint \emph{\url{arXiv:1706.04926}}}, 2017.

\bibitem[Rom18]{Rom16}
E.~Romano.
\newblock Non-elementary {F}ano conic bundles.
\newblock {\em to appear in {C}ollectanea {M}athematica}, 2018.

\bibitem[RRS92]{RRS92}
R.~Richardson, G.~R\"ohrle, and R.~Steinberg.
\newblock Parabolic subgroups with abelian unipotent radical.
\newblock {\em Invent. Math.}, 110(3):649--671, 1992.

\bibitem[RvdV60]{RV60}
R.~Remmert and T.~van~de Ven.
\newblock Zwei {S}\"atze \"uber die komplex-projektive {E}bene.
\newblock {\em Nieuw. Arch. Wisk. (3)}, 8:147--157, 1960.

\bibitem[Sat00]{Sat00}
H.~Sato.
\newblock Toward the classification of higher-dimensional toric {F}ano
  varieties.
\newblock {\em Tohoku Math. J. (2)}, 52(3):383--413, 2000.

\bibitem[Sha09]{Sha09}
E.~V. Sharoyko.
\newblock Hassett-{T}schinkel correspondence and automorphisms of a quadric.
\newblock {\em Sb. Math.}, 200(11):145--160, 2009.

\bibitem[Sho79]{Sho79}
V.~V. Shokurov.
\newblock The existence of a line on {F}ano varieties.
\newblock {\em Izv. Akad. Nauk SSSR Ser. Mat.}, 43(4):922--964, 968, 1979.

\bibitem[Sum74]{Sumihiro}
H.~Sumihiro.
\newblock Equivariant completion.
\newblock {\em J. Math. Kyoto Univ.}, 14:1--28, 1974.

\bibitem[Sup56]{Sup56}
D.~A. Suprunenko.
\newblock On maximal commutative subalgebras of the full linear algebra.
\newblock {\em Uspehi Mat. Nauk (N.S.)}, 11(3(69)):181--184, 1956.

\bibitem[S{\"{u}}s14]{Sus14}
H.~S{\"{u}}ss.
\newblock Fano threefolds with 2-torus action: a picture book.
\newblock {\em Doc. Math.}, 19:905--940, 2014.

\bibitem[Wil87]{Wil87}
P.~M.~H. Wilson.
\newblock Fano fourfolds of index greater than one.
\newblock {\em J. Reine Angew. Math.}, 379:172--181, 1987.

\bibitem[WW82]{WW82}
K.~{Watanabe} and M.~{Watanabe}.
\newblock {The classification of Fano 3-folds with torus embeddings.}
\newblock {\em {Tokyo J. Math.}}, 5:37--48, 1982.

\end{thebibliography}

\end{document}